\newtheorem{proposition}{Proposition}[section]
\newtheorem{theorem}[proposition]{Theorem}
\newtheorem{corollary}[proposition]{Corollary}
\newtheorem{lemma}[proposition]{Lemma}
\theoremstyle{definition}
\newtheorem{example}[proposition]{Example}
\theoremstyle{remark}
\newtheorem{remark}[proposition]{Remark}
\numberwithin{equation}{section}
\newcommand{\newword}[1]{\textbf{\textit{#1}}}
\newcommand{\integers}{\mathbb Z}
\newcommand{\reals}{\mathbb R}
\newcommand{\ep}{\epsilon}
\newcommand{\set}[1]{{\lbrace #1 \rbrace}}
\newcommand{\Set}[1]{{\big\lbrace #1 \big\rbrace}}
\newcommand{\SEt}[1]{{\Big\lbrace #1 \Big\rbrace}}
\newcommand{\join}{\vee}
\renewcommand{\Join}{\bigvee}
\newcommand{\meet}{\wedge}
\newcommand{\inv}{\operatorname{inv}}
\renewcommand{\neg}{\operatorname{neg}}
\newcommand{\rev}{\operatorname{rev}}
\newcommand{\alt}{\operatorname{alt}}
\newcommand{\Irr}{\operatorname{Irr}}
\newcommand{\Con}{\operatorname{Con}}
\newcommand{\Camb}{\operatorname{Camb}}
\newcommand{\Cg}{\operatorname{Cg}}
\newcommand{\covered}{\lessdot}
\newcommand{\A}{\mathcal{A}}
\newcommand{\F}{\mathcal{F}}
\newcommand{\ck}{\spcheck}
\newcommand{\tfg}{\tilde{\mathfrak{g}}}
\newcommand{\fg}{\mathfrak{g}}
\newcommand{\fh}{\mathfrak{h}}
\newcommand{\fr}{\mathfrak{r}}
\newcommand{\fn}{\mathfrak{n}}
\newcommand{\tfn}{\mathfrak{n}}
\newcommand{\ad}{\operatorname{ad}}
\author{Nathan Reading}
\address{Department of Mathematics, North Carolina State University, Raleigh, NC, USA}
\subjclass[2010]{20F55, 06B10}
\thanks{The author was partially supported by NSA grant H98230-09-1-0056 and by NSF grant DMS-1101568.}
\title{Lattice homomorphisms between weak orders}
\begin{document}

\begin{abstract}
We classify surjective lattice homomorphisms $W\to W'$ between the weak orders on finite Coxeter groups.
Equivalently, we classify lattice congruences $\Theta$ on $W$ such that the quotient $W/\Theta$ is isomorphic to $W'$.
Surprisingly, surjective homomorphisms exist quite generally:
They exist if and only if the diagram of $W'$ is obtained from the diagram of $W$ by deleting vertices, deleting edges, and/or decreasing edge labels.
A surjective homomorphism $W\to W'$ is determined by its restrictions to rank-two standard parabolic subgroups of~$W$.
Despite seeming natural in the setting of Coxeter groups, this determination in rank two is nontrivial.
Indeed, from the combinatorial lattice theory point of view, all of these classification results should appear unlikely \textit{a priori}.
As an application of the classification of surjective homomorphisms between weak orders, we also obtain a classification of surjective homomorphisms between Cambrian lattices and a general construction of refinement relations between Cambrian fans.
\end{abstract}

\maketitle
\setcounter{tocdepth}{1}
\tableofcontents

\section{Introduction}\label{intro}
The weak order on a finite Coxeter group $W$ is a partial order (in fact, lattice~\cite{orderings}) structure on $W$ that encodes both the geometric structure of the reflection representation of $W$ and the combinatorial group theory of the defining presentation of~$W$.
Recent papers have elucidated the structure of lattice congruences on the weak order \cite{congruence} and applied this understanding to construct fans coarsening the normal fan of the $W$-permutohedron~\cite{con_app}, 
combinatorial models of cluster algebras of finite type \cite{cambrian,sort_camb,camb_fan}, polytopal realizations of generalized associahedra \cite{HL,HLT}, and sub Hopf algebras of the Malvenuto-Reutenauer Hopf algebra of permutations \cite{sash,rectangle,Meehan,con_app}.  
A~thorough discussion of lattice congruences of the weak order (and more generally of certain posets of regions) is available in \cite{regions9,regions10}.

The purpose of this paper is to classify surjective lattice homomorphisms between the weak orders on two finite Coxeter groups $W$ and $W'$.
Equivalently, we classify the lattice congruences $\Theta$ on a finite Coxeter group $W$ such that the quotient lattice $W/\Theta$ is isomorphic to the weak order on a finite Coxeter group~$W'$.

From the point of view of combinatorial lattice theory, the classification results are quite surprising \textit{a priori}.
As an illustration of the almost miraculous nature of the situation, we begin this introduction with a representative example (Example~\ref{miraculous}), after giving just enough lattice-theoretic details to make the example understandable.
(More lattice-theoretic details are in Section~\ref{shard sec}.)

A \newword{homomorphism} from a lattice $L$ to a lattice $L'$ is a map $\eta:L\to L'$ such that $\eta(x\meet y)=\eta(x)\meet\eta(y)$ and $\eta(x\join y)=\eta(x)\join\eta(y)$.
A \newword{congruence} on a lattice $L$ is an equivalence relation such that 
\[(x_1\equiv y_1\text{ and }x_2\equiv y_2)\text{ implies }\left[(x_1\meet x_2)\equiv (y_1\meet y_2)\text{ and }(x_1\join x_2)\equiv (y_1\join y_2)\right].\]
Given a congruence $\Theta$ on $L$, the \newword{quotient lattice} $L/\Theta$ is a lattice structure on the set of equivalence classes where the meet $C_1\meet C_2$ of two classes is the equivalence class containing $x\meet y$ for any $x\in C_1$ and $y\in C_2$ and the join is described similarly.
When $L$ is a finite lattice, the congruence classes of any congruence $\Theta$ on $L$ are intervals.
The quotient $L/\Theta$ is isomorphic to the subposet of $L$ induced by the set of elements $x$ such that $x$ is the bottom element of its congruence class.

We use the symbol $\covered$ for cover relations in $L$ and often call a pair $x\covered y$ an \newword{edge} (because it forms an edge in the Hasse diagram of $L$).
If $x\covered y$ and $x\equiv y$, then we say that the congruence \newword{contracts} the edge $x\covered y$.
Since congruence classes on a finite lattice are intervals, to specify a congruence it is enough to specify which edges the congruence contracts.
Edges cannot be contracted independently;  rather, contracting some edge typically forces the contraction of other edges to ensure that the result is a congruence.
Forcing among edge contractions on the weak order is governed entirely\footnote{In a general lattice, forcing might be less local. (See \cite{GratzerPolygon}.)  
The weak order is special because it is a \newword{polygonal lattice}.
See \cite[~Definition~9-6.1]{regions9}, \cite[~Theorem~9-6.5]{regions9}, and \cite[~Theorem~10-3.7]{regions10}.} by a local forcing rule in \newword{polygons}.
A polygon is an interval such that the underlying graph of the Hasse diagram of the interval is a cycle.
There are two \newword{top edges} in a polygon, the two that are incident to the maximum, and two \newword{bottom edges}, incident to the minimum.
The remaining edges in the interval, if there are any, are \newword{side edges}.
The forcing rule for polygons is the following:  if a top (respectively bottom) edge is contracted, then the opposite bottom (respectively top) edge must also be contracted, and all side edges (if there are any) must be contracted.
One case of the rule is illustrated in Figure~\ref{poly force}, where shading indicates contracted edges.
(The other case of the rule is dual to the illustration.)
\begin{figure}
\scalebox{.9}{\includegraphics{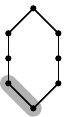}}\raisebox{21 pt}{\,\,\,\,$\implies$\,\,\,\,}\scalebox{.9}{\includegraphics{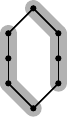}}
\caption{The forcing rule for edge contractions in the weak order}
\label{poly force}
\end{figure}

\begin{example}\label{miraculous}
Consider a Coxeter group $W$ of type $B_3$.
Figure~\ref{weakB3diagram}.a is a close-up of a certain order ideal in the weak order on $W$.
\begin{figure}
\begin{tabular}{ccc}
\scalebox{.85}{\includegraphics{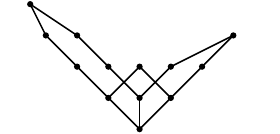}}&&\scalebox{.85}{\includegraphics{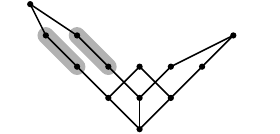}}\\
(a)&&(b)
\end{tabular}
\caption{a:  The defining presentation of $W$, encoded in an order ideal in the weak order.  b:  Contracting two edges in the order ideal.}
\label{weakB3diagram}
\end{figure}
This ideal contains all of the information about the Coxeter diagram of $W$.
Namely, the presence of an octagon indicates an edge with label 4, the hexagon indicates an edge with label 3, and the square indicates a pair of vertices not connected by an edge.

The Coxeter diagram of a Coxeter group of type $A_3$ has the same diagram, except that the label 4 is replaced by 3.
Informally, we can turn the picture in Figure~\ref{weakB3diagram}.a into the analogous picture for $A_3$, by contracting two side edges of the octagon to form a hexagon, as indicated by shading in Figure~\ref{weakB3diagram}.b.
If we take the same two edges in the whole weak order on $W$, we can use the polygonal forcing rules to find the finest lattice congruence that contracts the two edges.
This congruence is illustrated in Figure~\ref{B3toA3}.a.
\begin{figure}
\begin{tabular}{ccc}
\scalebox{.85}{\includegraphics{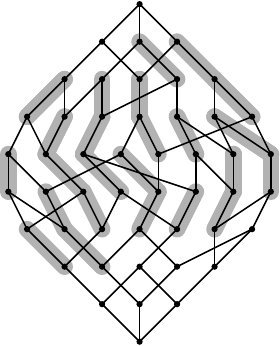}}&&\scalebox{.85}{\includegraphics{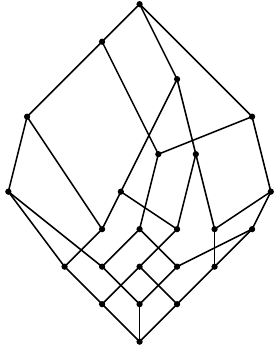}}\\
(a)&&(b)
\end{tabular}
\caption{a:  The smallest congruence on $W$ contracting the edges shaded in Figure~\ref{weakB3diagram}.b.  b:  The quotient modulo this congruence.}
\label{B3toA3}
\end{figure}
\textit{A~priori}, we shouldn't expect this congruence to have any significance, but, surprisingly, the quotient, shown in Figure~\ref{B3toA3}.b, of the weak order modulo this congruence is isomorphic to the weak order on a Coxeter group of type $A_3$.
(Recall from above that the lattice quotient is isomorphic to the subposet consisting of elements that are at the bottom of their congruence class.)

To recap:  
We start with the weak order on $B_3$, look at some polygons at the bottom of the weak order that encode the Coxeter diagram for $B_3$, and na\"{i}vely contract edges of these polygons to make one of the polygons smaller so that the polygons instead encode the Coxeter diagram for $A_3$.
Miraculously, the contracted edges generate a congruence such that the quotient is the weak order on $A_3$.
\end{example}

In general, a \newword{diagram homomorphism} starts with the Coxeter diagram of a Coxeter system $(W,S)$, deletes vertices, decreases labels on edges, and/or erases edges, and relabels the vertices to obtain the Coxeter diagram of some Coxeter system $(W',S')$.
(When no vertices are deleted, no labels are decreased, and no edges are erased, this is a \newword{diagram isomorphism} and when $(W',S')=(W,S)$ it is a \newword{diagram automorphism}.)
For brevity in what follows, we will say ``a diagram homomorphism from $(W,S)$ to $(W',S')$'' to mean ``a diagram homomorphism from the Coxeter diagram of $(W,S)$ to the Coxeter diagram of $(W',S')$.''

The first main results of the paper are the following theorem and several more detailed versions of it.
\begin{theorem}\label{main}
Given finite Coxeter systems $(W,S)$ and $(W',S')$, there exists a surjective lattice homomorphism from the weak order on $W$ to the weak order on $W'$ if and only if there exists a diagram homomorphism from $(W,S)$ to $(W',S')$
\end{theorem}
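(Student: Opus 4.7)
The plan is to prove the two directions separately, with the forward direction (existence of a diagram homomorphism implies existence of a surjective lattice homomorphism) being essentially constructive and the reverse direction requiring a delicate reduction to rank two.

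For the ``if'' direction, I would build the lattice homomorphism explicitly as a composition of more elementary ones, corresponding to the elementary moves on the diagram: (i) deleting a vertex $s$, (ii) decreasing a label $m(s,t)$ by one, and (iii) erasing an edge (a special case of (ii) ending at label $2$). For move (i), the homomorphism is the retraction onto the standard parabolic subgroup $W_{S \setminus \{s\}}$; the congruence contracts every edge labelled $s$. For moves (ii) and (iii), the local picture is precisely that of Example~\ref{miraculous} and Figure~\ref{weakB3diagram}.b: contract two opposite side edges of the rank-two polygon at the bottom corresponding to $\{s,t\}$ and let the polygonal forcing rule extend this to a congruence on all of $W$. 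The key step is to verify that the quotient by the join of these elementary congruences is isomorphic (as a lattice) to the weak order on $W'$, which amounts to exhibiting a canonical set of minimal representatives in the congruence classes and showing it carries the presentation of $W'$.

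For the ``only if'' direction, I would start with a surjective lattice homomorphism $\eta: W \to W'$, equivalently a congruence $\Theta$ on $W$ with quotient $W'$. The first step is the reduction to rank two: for each pair $\{s,t\} \subseteq S$, look at the restriction of $\Theta$ to the rank-two parabolic subgroup $W_{\{s,t\}}$, which sits as a polygon at the bottom of $W$. Because $\Theta$ is a lattice congruence, each such restriction is a congruence on the dihedral lattice $I_2(m(s,t))$, and one checks that the only quotients of $I_2(m)$ that are themselves weak orders of Coxeter groups are: the trivial quotient (no change), a quotient to $I_2(m')$ for some $m' \le m$ (decreasing the label), the collapse to a chain (erasing the edge or, if $m' = 1$, deleting a vertex), or collapse to a point. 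These possibilities match exactly the elementary diagram moves. The second, harder step is to argue that these rank-two restrictions determine $\Theta$ globally, and hence that $W'$ is precisely the Coxeter group obtained by performing the corresponding diagram moves; this is the content of the assertion in the abstract that ``a surjective homomorphism $W\to W'$ is determined by its restrictions to rank-two standard parabolic subgroups of $W$''.

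The principal obstacle, as the abstract already warns, is the reverse direction, and specifically the second step above. Combinatorial lattice congruences on the weak order are controlled by the polygonal forcing rule, but polygons of higher rank (for instance, rank-three polygons coming from triples $\{r,s,t\}$) can in principle transmit information between edges of the diagram in ways that have no reason to respect the dihedral structure. What must therefore be shown is that when the global quotient happens to be another weak order, these higher-rank interactions do not produce any ``exotic'' surjections beyond those predicted by diagram homomorphisms. I expect to handle this by combining the local dihedral analysis with a case analysis of rank-three parabolic subgroups (and thus of the finite Coxeter groups of ranks two and three), then lifting to arbitrary rank by the determination-in-rank-two principle. Once the rank-two data is seen to assemble into a genuine diagram homomorphism, identifying the quotient $W/\Theta$ with the weak order on the Coxeter group $W'$ specified by that diagram homomorphism follows from the forward direction.
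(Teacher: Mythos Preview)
Your proposal for the ``if'' direction is essentially the paper's approach: factor into a parabolic homomorphism (deleting vertices) followed by a compressive one (erasing edges and decreasing labels), and construct the latter explicitly. The paper does exactly this, though it warns (Remark~\ref{what's hard}) that the explicit maps are found via the machinery of shards, and that the label-decreasing cases beyond the simply-laced ones require a type-by-type analysis (Sections~\ref{dihedral sec}--\ref{exceptional sec}).

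Your ``only if'' direction, however, is badly overcomplicated, and you have misidentified which direction is hard. Given a surjective lattice homomorphism $\eta:W\to W'$, the existence of a diagram homomorphism follows from entirely elementary considerations (Proposition~\ref{basic facts}): one checks directly that $\eta$ sends atoms to atoms or to $1'$, that distinct atoms cannot collide except at $1'$, that $\eta$ restricts to a surjective homomorphism $W_J\to W'_{\eta(J)\setminus\{1'\}}$ for every $J\subseteq S$, and hence (by counting elements in the rank-two case) that $m'(\eta(r),\eta(s))\le m(r,s)$. That is the whole argument; no global determination result is needed.

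What you call the ``second, harder step'' --- that the rank-two restrictions determine $\Theta$ globally --- is the content of Theorem~\ref{diagram uniqueness}, a separate and genuinely difficult classification result whose proof uses shards and type-by-type analysis. It is not required for Theorem~\ref{main}. By invoking it, you are making the easy direction depend on the hardest theorem in the paper, and also introducing a circularity risk, since the paper's proof of Theorem~\ref{diagram uniqueness} relies on the existence results you are trying to establish in the other direction.
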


\begin{remark}\label{auto}
A restriction of Theorem~\ref{main} to \emph{isomorphisms} is well-known and extends to a characterization of meet-semilattice isomorphisms of the weak order on many infinite Coxeter groups.
See \cite[Corollary~3.2.6]{Bj-Br}.
\end{remark}

\begin{remark}\label{what's hard}
The existence of surjective homomorphisms between weak orders is not difficult to prove, \textit{a posteriori}:
We give explicit maps.
However, the other results in the classification need more machinery, specifically the machinery of shards, as explained in Section~\ref{shard sec}.
Furthermore, without the machinery of shards, we would not be able to find the explicit homomorphisms that prove existence.
\end{remark}

In order to make more detailed classification statements, we first give a factorization result for any surjective lattice homomorphism between weak orders.
Given a finite Coxeter group $W$ and a standard parabolic subgroup $W_J$, the \newword{parabolic homomorphism} $\eta_J$ is the map taking $w\in W$ to $w_J\in W_J$, where $w_J$ is the parabolic factor in the usual factorization of $w$ as an element of the parabolic subgroup times an element of the quotient.
A parabolic homomorphism corresponds to a diagram homomorphism that only deletes vertices from the diagram of~$W$.

An \newword{atom} in a finite lattice is an element that covers the minimal element $\hat0$.
We will call a homomorphism of finite lattices \newword{compressive} if it is surjective and restricts to a bijection between the sets of atoms of the two lattices.
(The term is an analogy to the physical process of compression where atoms are not created or destroyed but are brought closer together.
If $\eta:L\to L'$ is compressive, then $\eta$ moves two atoms $a_1,a_2$ of $L$ weakly closer in the sense that the interval below $\eta(a_1)\join\eta(a_2)$ has weakly fewer elements than the interval below $a_1\join a_2$.)
In particular, a compressive homomorphism between weak orders on Coxeter systems $(W,S)$ and $(W',S')$ is a surjective homomorphism $W\to W'$ that restricts to a bijection between $S$ and $S'$.
In Section~\ref{delete vert sec}, we prove the following theorem.
\begin{theorem}\label{para factor}
Let $\eta:W\to W'$ be a surjective lattice homomorphism and let $J=\set{s\in S:\eta(s)\neq 1'}$.
Then $\eta$ factors as $\eta|_{W_J}\circ\eta_J$.
The map $\eta|_{W_J}$ (the restriction of $\eta$ to $W_J$) is a compressive homomorphism.
\end{theorem}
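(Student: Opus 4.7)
The plan is to reduce everything to one containment of kernel congruences: writing $\Theta=\ker\eta$ and $\Theta_J=\ker\eta_J$, I will show $\Theta_J\subseteq\Theta$, so that $\eta$ descends through $W/\Theta_J\cong W_J$ and the remaining claims follow.

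For each $s\in S\setminus J$ we have $\eta(s)=1'=\eta(1)$ by the definition of $J$, so $\Theta$ contracts every cover $1\covered s$ with $s\in S\setminus J$. I would then identify $\Theta_J$ as the minimal lattice congruence on $W$ with this property, applying the polygonal forcing rule of Figure~\ref{poly force}: starting from the bottom contractions $\{1\covered s:s\in S\setminus J\}$, one forces the opposite top covers and all side covers around each rank-two standard parabolic polygon that contains such a bottom cover, and then propagates these contractions upward through the weak order until exactly the edges identified by $\eta_J$ have been generated. This gives $\Theta_J\subseteq\Theta$, so $\eta$ factors uniquely as $\eta=\bar\eta\circ\eta_J$ for some lattice homomorphism $\bar\eta\colon W_J\to W'$. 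Since $(w_J)_J=w_J$ for $w_J\in W_J$, plugging elements of $W_J$ into the factorization identifies $\bar\eta$ with $\eta|_{W_J}$. Surjectivity of $\eta|_{W_J}$ is then immediate from surjectivity of $\eta$.

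It remains to show that $\eta|_{W_J}$ restricts to a bijection between atoms. The key observation is that $[1]_\Theta\cap W_J=\{1\}$: if some $w\in W_J\setminus\{1\}$ had $\eta(w)=1'$, then the leftmost letter $s$ of any reduced expression for $w$ would be an element of $J$ and a left descent of $w$, hence $s\leq w$ in the weak order; this gives $\eta(s)\leq\eta(w)=1'$, contradicting $s\in J$. Writing $\Theta'=\ker(\eta|_{W_J})$, this observation says $[1]_{\Theta'}=\{1\}$. Hence for each $s\in J$ the class $[s]_{\Theta'}$ has $s$ as its bottom and covers $[1]_{\Theta'}$ in $W_J/\Theta'\cong W'$, so $\eta(s)$ is an atom of $W'$; distinct elements of $J$ are bottoms of distinct classes, so they give distinct atoms. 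Conversely, if $[b]$ is any atom of $W_J/\Theta'$ with bottom $b\in W_J$ and $b$ were not itself an atom of $W_J$, then any $b'\in W_J$ with $1<b'<b$ would lie in a class whose bottom $c$ satisfies $1<c\leq b'<b$ (using $[1]_{\Theta'}=\{1\}$ to rule out $c=1$), contradicting that $[b]$ covers $[1]_{\Theta'}$. Therefore $b\in J$, and $\eta|_{W_J}$ gives a bijection $J\to S'$.

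The main obstacle is the polygonal-forcing step identifying $\Theta_J$ with the minimum congruence contracting $\{1\covered s:s\in S\setminus J\}$. The forcing rule itself is elementary, but verifying that repeated applications generate exactly the parabolic congruence requires a careful traversal of the rank-two intervals of the weak order of $W$ and the way they hook together up through the lattice; everything else in the argument is a formal consequence of that combinatorial input.
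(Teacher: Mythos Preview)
Your approach is essentially the same as the paper's: reduce to the containment $\Theta_J\subseteq\Theta$, which follows because $\Theta_J$ is the \emph{finest} congruence setting $1\equiv s$ for all $s\in S\setminus J$, and then read off the factorization. The paper handles this key step by quoting Theorem~\ref{para cong} (proved in \cite{congruence}), whereas you propose to reprove it by polygon forcing and correctly flag this as the main obstacle; your sketch is accurate in spirit but is not a proof, and carrying it out would amount to redoing the argument behind Theorem~\ref{para cong}.

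For compressivity the two arguments diverge slightly. The paper uses Proposition~\ref{basic facts}(2)--(3) to see directly that $\eta$ maps $J$ bijectively onto $S'$. Your route---showing $[1]_{\Theta}\cap W_J=\{1\}$ via a left-descent argument and then analyzing atoms of the quotient---is a correct and self-contained alternative that avoids citing Proposition~\ref{basic facts}; it is a bit longer but perhaps more transparent about why the atom bijection holds.
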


Parabolic homomorphisms and their associated congruences are well understood.
(See \cite[Section~6]{congruence}.)
The task, therefore, becomes to understand compressive homomorphisms between Coxeter groups.
To study compressive homomorphisms from $(W,S)$ to $(W',S')$, we may as well take $S'=S$ and require $\eta$ to restrict to the identity on $S$.
For each $r,s\in S$, let $m(r,s)$ be the order of $rs$ in $W$, and let $m'(r,s)$ be the order of $rs$ in $W'$.
Elementary considerations show that if $\eta:W\to W'$ is compressive, then $m'(r,s)\le m(r,s)$ for each pair $r,s\in S$.
(See Proposition~\ref{diagram facts}.)
Thus a compressive homomorphism corresponds to a diagram homomorphism that only erases edges from and/or reduces edge labels on the diagram of $W$.
More surprising, this property is sufficient to guarantee the existence of a compressive homomorphism.
The following theorem shows that Example~\ref{miraculous} is typical, rather than unusual.
Together with Theorem~\ref{para factor}, it implies Theorem~\ref{main} and adds additional detail.

\begin{theorem}\label{existence}
Suppose $(W,S)$ and $(W',S)$ are finite Coxeter systems.
Then there exists a compressive homomorphism from $W$ to $W'$, fixing $S$, if and only if $m'(r,s)\le m(r,s)$ for each pair $r,s\in S$.
If so, then the homomorphism can be chosen so that the associated congruence on $W$ is homogeneous of degree~$2$.
\end{theorem}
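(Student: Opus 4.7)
The forward direction is immediate from Proposition~\ref{diagram facts}, so the substance lies in the converse. My plan is to reduce to the case of a single elementary label drop, construct the congruence locally by contracting two side edges of a rank-two polygon, and then use the shard machinery to identify the quotient and verify the degree-$2$ homogeneity. For the first step, any valid system $(m'(r,s))_{r,s\in S}$ arises from $(m(r,s))_{r,s\in S}$ by a sequence of single-label decrements, with the convention that a move from $3$ to $2$ corresponds to erasing an edge of the diagram. Since compressive homomorphisms fixing $S$ compose to again give compressive homomorphisms fixing $S$ (the atom bijection being the identity on $S$), it suffices to treat the elementary case where one label $m(r,s)\ge 3$ is decreased by $1$ and all others are preserved. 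Moreover, if each elementary step can be realized by a congruence that is homogeneous of degree $2$, then the overall homomorphism may be chosen as a composition with the same property.

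Second, for the elementary case, I would construct the congruence $\Theta$ locally, generalizing Example~\ref{miraculous}. The rank-two parabolic $W_{\{r,s\}}$ is a dihedral group sitting at the bottom of the weak order as a $2m(r,s)$-gon. I would identify a pair of side edges of this polygon that are symmetric with respect to the involution swapping $r$ and $s$ and sit ``in the middle'' of the polygon; contracting this pair transforms the polygon into a $2(m(r,s)-1)$-gon. I would then let $\Theta$ be the smallest lattice congruence containing these two edge contractions, with well-definedness ensured by the polygonal forcing rule, which reduces the calculation of $\Theta$ to a local combinatorial operation on the polygons of $W$.

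Third, I would prove $W/\Theta\cong W'$ and verify the degree-$2$ homogeneity using the shard machinery developed in Section~\ref{shard sec}. Each edge of the weak order corresponds to a shard, and $\Theta$ corresponds to a union of shards closed under shard forcing. By classifying the shards absorbed into $\Theta$ and showing that the remaining shards are in natural bijection with those of the weak order on $W'$, I would construct an isomorphism between $W/\Theta$ and the weak order on $W'$. For homogeneity of degree $2$, I would argue that the two initial contractions remain disjoint $2$-element classes as forcing propagates through higher-rank polygons, which amounts to checking that no rank-three or higher polygon causes the two contracted shards to coalesce.

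The hardest part will be this last step: proving that the locally defined contractions really do produce the weak order on $W'$ as a quotient, rather than a lattice that merely has the same rank-two skeleton. I expect this to require a careful inspection of the rank-three standard parabolic subgroups containing both $r$ and $s$, since it is in those parabolics that nontrivial forcing interactions across the $\{r,s\}$-polygon can occur. Without the shard picture there is no combinatorial reason to expect this quotient to coincide with a weak order at all — this is the ``miraculous'' aspect emphasized by the author in the introduction, and the shard machinery is exactly what is needed to render it a theorem.
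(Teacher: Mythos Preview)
Your reduction by single label decrements is a valid alternative to the paper's split into edge-erasing followed by label-decreasing (and your intermediate Coxeter systems are always finite, as one checks from the classification).  The composition-preserves-degree-$2$ claim is also fine, though it needs a short argument you did not give: a degree-$2$ generator $j$ for the second congruence lies in some $W''_{\{r,s\}}$, and since the first map restricts to a surjection $W_{\{r,s\}}\to W''_{\{r,s\}}$, the bottom element of its fiber lies in $W_{\{r,s\}}$ and is thus a degree-$2$ join-irreducible of $W$.

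There is a confusion in your third step.  Your congruence $\Theta$ is \emph{by definition} the smallest congruence generated by two degree-$2$ join-irreducibles, so it is homogeneous of degree~$2$ automatically; ``checking that no rank-three or higher polygon causes the two contracted shards to coalesce'' is not the issue.  The entire content of the theorem is the claim $W/\Theta\cong W'$, and this is precisely what you have not proved.

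That gap is the whole theorem.  Your plan to ``classify the shards absorbed into $\Theta$ and show that the remaining shards are in natural bijection with those of the weak order on $W'$'' is exactly what the paper carries out, but it does so \emph{type by type}: the dihedral case is trivial, the $B_n\to A_n$ case occupies all of Section~\ref{Bn Sn+1 sec} (explicit maps plus lengthy shard-digraph computations via Theorem~\ref{B shard}), and the cases $F_4$, $H_3$, $H_4$ in Section~\ref{exceptional sec} are handled by computer verification.  The paper's Remark~\ref{type-by-type} explicitly records that uniform arguments are not expected outside the simply-laced case.  Your hope that ``careful inspection of the rank-three standard parabolic subgroups containing both $r$ and $s$'' would suffice does not match what the paper needs: the $B_n$ argument genuinely uses the full shard digraph of $B_n$, not just that of $B_3$.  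So your strategy is sound in outline and your ``middle side edge'' choice (e.g.\ $\{qrq,rqr\}$ for $H_3\to B_3$, matching Case~1 of Theorem~\ref{H3 B3 thm}) is the right one, but the proposal stops exactly where the actual work begins.
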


We will review the definition of homogeneous congruences in Section~\ref{shard sec}.
Informally, a homogeneous congruence of degree~$2$ is a congruence that is determined by contracting edges located in the order ideal of the weak order that describes the diagram of $W$, as in Figure~\ref{weakB3diagram}.
The situation is perhaps best appreciated by analogy:
Showing that $W'$ is isomorphic to the quotient of $W$ modulo a homogeneous congruence of degree~$2$ is analogous to finding that a graded ring $R$ is isomorphic to another graded ring $R'$ modulo an ideal generated by homogeneous elements of degree~$2$.
It should be noted, however, that lattice congruences are in general more complicated than ring congruences because the classes of a lattice congruence are not in general defined by an ideal.
See \mbox{\cite[Sections~II.3--4]{Birkhoff}}.

Given $(W,S)$ and $(W',S)$ with $m'(r,s)\le m(r,s)$ for each pair $r,s\in S$, there may be several homomorphisms from $W$ to $W'$ whose associated congruence is homogeneous of degree $2$.
There may also be several homomorphisms whose congruence is not homogeneous.
(In these non-homogeneous cases, the degree of the congruence is always $3$.)
These several possibilities are well-characterized, as we now explain.
Elementary considerations show that the restriction of a compressive homomorphism to any standard parabolic subgroup is still compressive.
(See Proposition~\ref{diagram facts}.)
It turns out that compressive homomorphisms of Coxeter groups are determined by their restrictions to rank-two standard parabolic subgroups.

\begin{theorem}\label{diagram uniqueness}
Let $(W,S)$ and $(W',S)$ be finite Coxeter systems with $m'(r,s)\le m(r,s)$ for each pair $r,s\in S$.
For each $\set{r,s}\subseteq S$, fix a surjective homomorphism $\eta_\set{r,s}$ from $W_{\set{r,s}}$ to $W'_{\set{r,s}}$ with $\eta_\set{r,s}(r)=r$ and $\eta_\set{r,s}(s)=s$.
Then 
there is at most one homomorphism $\eta:W\to W'$ such that the restriction of $\eta$ to $W_{\set{r,s}}$ equals $\eta_\set{r,s}$ for each pair $r,s\in S$.
\end{theorem}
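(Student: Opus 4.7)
Because each $\eta_{\set{r,s}}$ fixes both $r$ and $s$, any $\eta$ satisfying the hypothesis fixes $S$ elementwise, so $\eta$ is compressive. A compressive homomorphism $W\to W'$ that fixes $S$ is determined by its kernel congruence $\Theta$: any two such homomorphisms with the same kernel induce isomorphisms $W/\Theta\to W'$ that act identically on atoms, and an isomorphism between weak orders is determined by its action on atoms (as in Remark~\ref{auto}). So the theorem reduces to showing that $\Theta$ itself is determined by the restrictions $\eta|_{W_{\set{r,s}}}$.

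I would then invoke the fact that the weak order is a finite polygonal lattice, so $\Theta$ is uniquely specified by the set of edges $u\covered v$ of $W$ that it contracts, subject to the polygon forcing rule from the introduction. The rank-two data $\eta|_{W_{\set{r,s}}}$ determines precisely which edges $u\covered v$ with both endpoints in $W_{\set{r,s}}$ are contracted by $\Theta$. The question becomes whether this information, together with polygon forcing, determines which arbitrary edge of $W$ is contracted; equivalently, that if two congruences on $W$ contract the same edges inside every standard rank-two parabolic subgroup, then they coincide.

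The main obstacle is that an arbitrary edge $u\covered v$ of $W$ need not itself lie in any standard rank-two parabolic subgroup; rather, it sits inside a polygon which is in general a left-translate of a rank-two parabolic interval. The plan is to chain polygon-forcing implications from an arbitrary edge back to an edge inside a standard rank-two parabolic, inducting on the length of the minimum of the polygon containing it. This is where the shard machinery of Section~\ref{shard sec} should do the essential work, by providing a bookkeeping device for how forcing propagates across the $W$-arrangement; the key geometric claim is that every shard is linked, under the forcing order on shards, to a shard contained in a standard rank-two parabolic subarrangement. Establishing this shard-theoretic claim is the essential difficulty, and I expect the remaining pieces of the argument to be essentially formal once it is in hand.
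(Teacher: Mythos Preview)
Your reduction in the first paragraph is fine, but the heart of your proposal---the claim that ``if two congruences on $W$ contract the same edges inside every standard rank-two parabolic subgroup, then they coincide''---is false, and so the shard-theoretic claim you hope to prove cannot hold.  A concrete counterexample is already implicit in Section~\ref{nonhom sec}: take $W=B_3$ and consider the congruence $\Theta_0$ generated by $s_0s_1s_0$ and $s_1s_0$ versus the congruence $\Theta_\nu$ generated by $s_0s_1s_0$, $s_1s_0$, $s_1s_0s_1s_2$, and $s_2s_1s_0s_1s_2$.  These agree on every rank-two standard parabolic (the two extra generators have degree~$3$), yet $\Theta_0\neq\Theta_\nu$; indeed $B_3/\Theta_0$ has strictly more elements than $S_4$.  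The degree-$3$ generators are \emph{not} forced by the degree-$2$ data via the shard digraph---that is exactly why they appear in the generating set.  So polygon/shard forcing from rank-two information alone cannot determine $\Theta$.

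What your outline never uses, and what the paper's argument uses essentially, is the hypothesis that $W/\Theta\cong W'$.  The paper's proof is case-by-case: for each irreducible pair $(W,W')$ and each choice of rank-two data, one first passes to the quotient by the congruence generated by the forced degree-$2$ join-irreducibles, then inspects $\Irr(\Con)$ of that quotient and checks (sometimes by computer) that there is a \emph{unique} further order ideal one can remove to leave a poset isomorphic to $\Irr(\Con(W'))$.  In the non-homogeneous cases this further ideal is nonempty, which is precisely why your forcing argument breaks.  The paper explicitly notes (Remark~\ref{type-by-type}) that a uniform argument is not expected outside the simply-laced case; in the simply-laced case your strategy does essentially work, and is what underlies Corollary~\ref{existence uniqueness simply}.
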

As will be apparent in Section~\ref{dihedral sec}, for each pair $\set{r,s}\subseteq S$ with $r\neq s$, there are exactly $\binom{a}{b}^2$ choices of $\eta_\set{r,s}$, where $a=m(r,s)-2$ and $b=m(r,s)-m'(r,s)$.
In Example~\ref{miraculous}, there are four ways to choose all of the maps $\eta_\set{r,s}$:  
For both pairs $r,s$ with $m(r,s)\le 3$, we must choose the identity map.
For the pair $r,s$ with $m(r,s)=4$ and $m'(r,s)=3$, there are four choices, corresponding to the four ways to contract one ``left'' side edge and one ``right'' side edge in the octagonal interval of Figure~\ref{weakB3diagram}.a.
Theorem~\ref{diagram uniqueness} says, in particular, that in the example there are at most four homomorphisms that fix $S$ pointwise.

Combining Theorem~\ref{diagram uniqueness} with Theorem~\ref{para factor} leads immediately to the following more general statement.
\begin{corollary}\label{surjective uniqueness}
Let $W$ and $W'$ be finite Coxeter groups.
A surjective homomorphism from the weak order on $W$ to the weak order on $W'$ is determined by its restrictions to rank-two standard parabolic subgroups.
\end{corollary}

The statement of Theorem~\ref{diagram uniqueness} on the uniqueness of compressive homomorphisms, given their restrictions to rank-two standard parabolic subgroups, is remarkably close to being an existence and uniqueness theorem, in the sense that the phrase ``at most one'' can almost be replaced with ``exactly one.''
The only exceptions arise when $W$ has a standard parabolic subgroup of type $H_3$ such that the corresponding standard parabolic subgroup of $W'$ is of type $B_3$.
In particular, adding the hypothesis that $W$ and $W'$ are crystallographic turns Theorem~\ref{diagram uniqueness} into an existence and uniqueness theorem (stated as Theorem~\ref{existence uniqueness crys}).
Less generally, when $W$ and $W'$ are simply laced meaning that all edges in their diagrams are unlabeled), the existence and uniqueness theorem holds, and in fact this simply laced version of the theorem (Corollary~\ref {existence uniqueness simply}) has a uniform proof, given in Sections~\ref{erase edge sec}--\ref{shard sec}.
The remainder of the classification is proved, type-by-type in the classification of finite Coxeter groups, in Sections~\ref{dihedral sec}, \ref{Bn Sn+1 sec}, and \ref{exceptional sec}.

\begin{remark}\label{type-by-type}
It is disappointing that some of these proofs are not uniform.
However, as described above, the classification of surjective homomorphisms between weak orders on finite Coxeter groups is itself not uniform.
While nice things happen quite generally, the exceptions in some types suggest that uniform arguments probably don't exist outside of the simply-laced case (Corollary~\ref {existence uniqueness simply}).
In particular, although Theorem~\ref{existence uniqueness crys} is a uniform statement about the crystallographic case, there is no indication that the combinatorial lattice theory of the weak order detects the crystallographic case, so a uniform proof would be surprising indeed.
\end{remark}

In Section~\ref{camb sec}, we use the classification of surjective lattice homomorphisms between weak orders to classify surjective lattice homomorphisms between Cambrian lattices.
Cambrian lattices are quotients of the weak order modulo certain congruences called Cambrian congruences.
A Cambrian lattice can also be realized as a sublattice of the weak order consisting of sortable elements \cite{sortable,sort_camb,camb_fan}.
The significance of the Cambrian lattices begins with a collection of results, conjectured in~\cite{cambrian} and proved in~\cite{HLT,sortable,sort_camb,camb_fan}, which say that the Cambrian lattices and the related Cambrian fans encode the combinatorics and geometry of generalized associahedra of~\cite{ga}, which in turn provide a combinatorial model \cite{ga,ca2,camb_fan,framework} for cluster algebras of finite type.

The classification of surjective lattice homomorphisms between Cambrian lattices, given in Theorems~\ref{camb para factor}, \ref{camb exist unique}, and~\ref{camb diagram}, parallels the classification of surjective lattice homomorphisms between weak orders.
The main difference is that the Cambrian lattice results have uniform statements in terms of \newword{oriented diagram homomorphisms}.
(However,  our proofs rely on the non-uniform proofs given earlier for the weak order.)
An example of a compressive homomorphism between Cambrian lattices appears as Example~\ref{camb hom ex}, which continues Example~\ref{miraculous}.

Interesting geometric consequences are obtained by combining the results of this paper with \cite[Theorem~1.1]{con_app}.
The latter theorem states that every lattice congruence $\Theta$ on the weak order on $W$ defines a polyhedral fan $\F_\Theta$ that coarsens the fan $\F(W)$ defined by the reflecting hyperplanes of $W$.
The theorem also describes the interaction between the combinatorics/geometry of the fan $\F_\Theta$ and the combinatorics of the quotient lattice.
(In type A, the fan $\F_\Theta$ is known to be polytopal~\cite{PS} for any $\Theta$, but no general polytopality result is known in other types.)
The fact that a surjective lattice homomorphism $\eta:W\to W'$ exists whenever $m'(r,s)\le m(r,s)$ for each pair $r,s\in S$ leads to explicit constructions of a fan $\F_\Theta$ coarsening $\F(W)$ such that $\F_\Theta$ is combinatorially isomorphic to the fan $\F(W')$ defined by the reflecting hyperplanes of $W'$.
An example of this geometric point of view (corresponding to Example~\ref{miraculous}) appears as Example~\ref{B3 to A3 shard}.

Working along the same lines for surjective congruences between Cambrian lattices, we obtain refinement relationships between Cambrian fans (fans associated to Cambrian congruences).
We conclude the introduction by describing these refinement relationships in terms of dominance relationships between Cartan matrices.

A Cartan matrix $A=[a_{ij}]$ \newword{dominates} a Cartan matrix $A'=[a'_{ij}]$ if ${|a_{ij}|\ge |a'_{ij}|}$ for all $i$ and $j$.
The dominance relation on Cartan matrices implies that $m(r,s)\ge m'(r,s)$ for all $r,s\in S$ in the corresponding Weyl groups.
In the following proposition, $\Phi(A)$ is the full root system for $A$, including any imaginary roots.
In this paper, we only use the proposition in finite type, where there are no imaginary roots.
The proposition follows from known facts about Kac-Moody Lie algebras (see Section~\ref{crys case}), and has also been pointed out as \cite[Lemma~3.5]{Marquis}.

\begin{proposition}\label{dom subroot}
Suppose $A$ and $A'$ are symmetrizable Cartan matrices such that $A$ dominates~$A'$.
If $\Phi(A)$ and $\Phi(A')$ are both defined with respect to the same simple roots $\alpha_i$, then $\Phi(A)\supseteq\Phi(A')$ and $\Phi_+(A)\supseteq\Phi_+(A')$.
\end{proposition}

Proposition~\ref{dom subroot} may appear to be obviously false to someone who is familiar with root systems.
To clarify, we emphasize that defining both $\Phi(A)$ and $\Phi(A')$ with respect to the same simple roots means identifying the root space of $\Phi(A)$ with the root space of $\Phi(A')$ \emph{by identifying the bases $\set{\alpha_i}$}.
Thus we may restate the proposition as follows: The set of simple root coordinate vectors of roots in $\Phi(A')$ is a subset of the set of simple root coordinate vectors of roots in $\Phi(A)$.

The refinement result on Cambrian lattices is best expressed in terms of co-roots, so we rephrase Proposition~\ref{dom subroot} as Proposition~\ref{dom subcoroot}, which asserts a containment relation among dual root systems $\Phi\ck(A)$ and $\Phi\ck(A')$ when we identify the simple co-roots instead of the simple roots. 
The following result is proved by constructing (in Theorem~\ref{weak hom subroot}) the appropriate homomorphism from $W$ to $W'$ and using it to construct a homomorphism of Cambrian lattices.

\begin{theorem}\label{camb fan coarsen}
Suppose $A$ and $A'$ are Cartan matrices such that $A$ dominates $A'$ and suppose $W$ and $W'$ are the associated groups, both generated by the same set $S$.
Suppose $c$ and $c'$ are Coxeter elements of $W$ and $W'$ respectively that can be written as a product of the elements of $S$ in the same order.
Choose a root system $\Phi(A)$ and a root system $\Phi(A')$ so that the simple \emph{co}-roots are the same for the two root systems.
Construct the Cambrian fan for $(A,c)$ by coarsening the fan determined by the Coxeter arrangement for $\Phi(A)$ and construct the Cambrian fan for $(A',c')$ by coarsening the fan determined by the Coxeter  arrangement for $\Phi(A')$.
Then the Cambrian fan for $(A,c)$ refines the Cambrian fan for $(A',c')$.
Whereas the codimension-$1$ faces of the Cambrian fan for $(A,c)$ are orthogonal to co-roots (i.e.\ elements of $\Phi\ck(A)$), the Cambrian fan for $(A',c')$ is obtained by removing all codimension-$1$ faces orthogonal to elements of $\Phi\ck(A)\setminus\Phi\ck(A')$.  
\end{theorem}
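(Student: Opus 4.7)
The plan is to descend the weak-order homomorphism produced by Theorem~\ref{weak hom subroot} to a homomorphism of Cambrian lattices, and then translate the resulting congruence-theoretic data into a statement about fans by applying \cite[Theorem~1.1]{con_app}. First, since $A$ dominates $A'$, comparing off-diagonal entries of the Cartan matrices forces $m(r,s)\ge m'(r,s)$ for every $r,s\in S$, so Theorem~\ref{existence} at least guarantees the existence of a compressive homomorphism $W\to W'$ fixing $S$. For the geometric statement, however, I would invoke the specific homomorphism $\eta:W\to W'$ built in Theorem~\ref{weak hom subroot}, whose associated congruence $\Theta_\eta$ is adapted to the co-root picture of Proposition~\ref{dom subcoroot}: the shards it contracts are exactly those orthogonal to co-roots lying in $\Phi\ck(A)\setminus\Phi\ck(A')$, so the codimension-$1$ walls of the Coxeter fan for $\Phi(A)$ that survive $\Theta_\eta$ are exactly those orthogonal to co-roots in $\Phi\ck(A')$.

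Next, because $c$ and $c'$ arise from the same reduced word in the common set $S$, the Cambrian congruences $\Theta_c$ on $W$ and $\Theta_{c'}$ on $W'$ are defined from the same orientation of the Coxeter diagram. I would show that $\eta$ descends to a surjective lattice homomorphism $\bar\eta$ between the Cambrian lattices, or equivalently, that the join $\Theta_c\join\Theta_\eta$ in the congruence lattice of the weak order on $W$ pulls back from $\Theta_{c'}$ along $\eta$, so that $W/(\Theta_c\join\Theta_\eta)\cong W'/\Theta_{c'}$. Finally, \cite[Theorem~1.1]{con_app} converts this into the fan statement: the Cambrian fan for $(A,c)$ is $\F_{\Theta_c}$ inside the Coxeter arrangement for $\Phi(A)$; passing to the coarser congruence $\Theta_c\join\Theta_\eta$ removes exactly the shards contracted by $\Theta_\eta$ but not already by $\Theta_c$, and these are the walls orthogonal to co-roots in $\Phi\ck(A)\setminus\Phi\ck(A')$. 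The resulting coarsening is $\F_{\Theta_{c'}}$, the Cambrian fan for $(A',c')$, with the removed walls described exactly as claimed.

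The main obstacle is the descent to Cambrian quotients, namely verifying that $\Theta_c\join\Theta_\eta$ has the predicted quotient $W'/\Theta_{c'}$. This requires a shard-level comparison showing that a shard of $W$ contracted by $\Theta_\eta$ either is already contracted by $\Theta_c$, or maps under $\eta$ to a shard of $W'$ that is contracted by $\Theta_{c'}$, and conversely that every shard of $W$ contracted by $\Theta_{c'}\circ\eta$ lies in $\Theta_c\join\Theta_\eta$. The hypothesis that $c$ and $c'$ have the same expression in $S$ is essential here, since it ensures that the notion of $c$-alignment of a shard transports correctly along $\eta$ to the notion of $c'$-alignment, after the identification of simple co-roots built into Theorem~\ref{weak hom subroot}.
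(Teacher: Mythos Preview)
Your overall strategy matches the paper's: apply Theorem~\ref{weak hom subroot} to get the geometric homomorphism $\eta:W\to W'$, descend to the Cambrian lattices, and read off the fan refinement via \cite[Theorem~1.1]{con_app}. The paper's proof is a single sentence citing Theorems~\ref{camb exist unique} and~\ref{weak hom subroot}.

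The difference is in how you handle what you call the ``main obstacle,'' the descent to Cambrian quotients. You propose a direct shard-level comparison (checking that shards contracted by $\Theta_\eta$ behave correctly relative to $\Theta_c$ and $\Theta_{c'}$). This is more work than necessary, and your formulation of the needed comparison is not quite right: shards of $W$ do not literally ``map under $\eta$ to shards of $W'$,'' since the cutting relations in $\A$ and $\A'$ differ. The paper instead handles the descent once and for all in Theorem~\ref{key camb}, which shows that \emph{any} surjective lattice homomorphism $W\to W'$ restricts to a surjective homomorphism $\Camb(W,c)\to\Camb(W',c')$. The proof of Theorem~\ref{key camb} avoids shard bookkeeping entirely: it uses the Isomorphism Theorem (Theorem~\ref{3 isom}), the generator calculation of Lemma~\ref{Psi and tilde gen}, and the crucial fact (Theorem~\ref{sort_camb thm}) that $\Camb(W,c)$ is a \emph{sublattice} of $W$, not merely a quotient. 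Once that is in hand, the fan statement follows because $\Theta_\eta$ removes exactly the shards lying in hyperplanes of $\A\setminus\A'$ (this is how $\eta$ was constructed in Theorem~\ref{weak hom subroot}), so $\F_{\Theta_\eta}$ is literally the Coxeter fan for $\Phi(A')$, and the further coarsening by $(\Theta_c\join\Theta_\eta)/\Theta_\eta\cong\Theta_{c'}$ produces the Cambrian fan for $(A',c')$.

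So your plan is sound, but you should replace the proposed ad hoc shard comparison with an appeal to Theorem~\ref{key camb}; that is where the real content lives, and it is already proved.
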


As mentioned above and as explained in \cite[Section~5]{framework}, Cambrian fans provide a combinatorial model for cluster algebras of finite type.  
The cluster-algebraic consequences of Theorem~\ref{camb fan coarsen} are considered in \cite{dominance}.
Indeed, inspired by Theorem~\ref{camb fan coarsen}, the paper \cite{dominance} studies much more general cluster-algebraic phenomena related to dominance relations among matrices.

\begin{remark}\label{simion rem}
To the author's knowledge, the first appearance in the literature of a nontrivial surjective lattice homomorphism between finite Coxeter groups is a map found in Rodica Simion's paper \cite{Simion}.
(See Section~\ref{simion sec}.)
Simion's motivations were not lattice-theoretic, so she did not show that the map is a surjective lattice homomorphism.
However, she did prove several results that hint at lattice theory, including the fact that fibers of the map are intervals and that the \emph{order-theoretic} quotient of $B_n$ modulo the fibers of the map is isomorphic to $S_{n+1}$.
It was Simion's map that first alerted the author to the fact that interesting homomorphisms exist.
\end{remark}

\section{Deleting vertices}\label{delete vert sec}
In this section, we develop the most basic theory of surjective lattice homomorphisms between weak orders, leading to the proof of Theorem~\ref{para factor}, which factors a surjective homomorphism into a parabolic homomorphism and a compressive homomorphism.

We assume the standard background about Coxeter groups and the weak order, which is found, for example, in~\cite{Bj-Br}.
(For an exposition tailored to the point of view of this paper, see~\cite{regions10}.)
As we go, we introduce background on the combinatorics of homomorphisms and congruences of finite lattices.
Proofs of assertions not proved here are found in \cite[Section~9-5]{regions9}.

Let $(W,S)$ be a Coxeter system with identity element $1$.
The usual length function on $W$ is written $\ell$.
The pairwise orders of elements $r,s\in S$ are written $m(r,s)$.

The symbol $W$ will denote not only the group $W$ but also a partial order, the (right) weak order on the Coxeter group $W$.
This is the partial order on $W$ whose cover relations are of the form $w\covered ws$ for all $w\in W$ and $s\in S$ with $\ell(w)<\ell(ws)$.
The set $T$ of reflections of $W$ is $\set{wsw^{-1}:w\in W,s\in S}$.
The inversion set of and element $w\in W$ is $\inv(w)=\set{t\in T:\ell(tw)<\ell(w)}$.
An element is uniquely determined by its inversion set.
The weak order on $W$ corresponds to containment order on inversion sets.
The minimal element of $W$ is $1$ and the maximal element is $w_0$.
We have $w_0=\Join S$.

Given $J\subseteq S$, the standard parabolic subgroup generated by $J$ is written $W_J$.
This is, in particular, a lower interval in $W$.
The maximal element of $W_J$ is $w_0(J)$, which equals $\Join J$.
We need a second Coxeter system $(W',S')$, and we use the same notation for $W'$ as for $W$, with primes added to distinguish the groups.
As a first step, we prove the following basic facts:

\begin{proposition}\label{basic facts}
Let $\eta:W\to W'$ be a surjective lattice homomorphism.
Then
\begin{enumerate}
\item $\eta(1)=1'$ and $\eta(w_0)=w'_0$.
\item $S'\subseteq\eta(S)\subseteq (S'\cup\set{1'})$.
\item If $r$ and $s$ are distinct elements of $S$ with $\eta(r)=\eta(s)$, then $\eta(r)=\eta(s)=1'$.
\item If $J\subseteq S$, then $\eta$ restricts to a surjective homomorphism $W_J\to W'_{\eta(J)\setminus\set{1'}}$.
\item $m'(\eta(r),\eta(s))\le m(r,s)$ for each pair $r,s\in S$ with $\eta(r)\neq1'$ and $\eta(s)\neq1'$.
\end{enumerate}
\end{proposition}
\begin{proof}
A surjective homomorphism of finite lattices takes the minimal element to the minimal element and the maximal element to the maximal element, so (1) holds.

Suppose $s\in S$ has $\eta(s)\not\in S'\cup\set{1'}$.
Then there exists $s'\in S'$ such that $\eta(s)>s'$.
Since $\eta$ is surjective, there exists $w\in W$ such that $\eta(w)=s'$.
But $\eta$ is order-preserving and $s\le (s\join w)$, so $\eta(s)\le\eta(s\join w)=\eta(s)\join s'=s'$,  and this contradiction shows that $\eta(S)\subseteq S'\cup\set{1'}$.
We have $\Join\eta(S)=\eta(w_0)=w'_0$.
But $\Join J'<w'_0$ for any proper subset $J'$ of $S'$, so $S'\subseteq\eta(S)$, and we have proved (2).

To prove (3), let $r$ and $s$ be distinct elements of $S$ with $\eta(r)=\eta(s)$.
Then $1'=\eta(1)=\eta(r\meet s)=\eta(r)\meet\eta(s)=\eta(r)$.

Applying $\eta$ to $w_0(J)$ yields $\eta(\Join J)$, which equals $\Join_{s\in J}\eta(s)=w'_0(\eta(J)\setminus\set{1'})$.
Since $W_J$ is the interval $[1,w_0(J)]$ and $\eta$ is order-preserving, $\eta(W_J)\subseteq[1,w'_0(\eta(J)\setminus\set{1'})]=W'_{\eta(J)\setminus\set{1'}}$.
If $w'\in W'_{\eta(J)\setminus\set{1'}}$, then since $\eta$ is surjective, there exists $w\in W$ such that $\eta(w)=w'$.
Then $w_0(J)\meet w$ is in $W_J$, and $\eta(w_0(J)\meet w)=w'_0(\eta(J)\setminus\set{1'})\meet w'=w'$.
We have proved (4).

If $\eta(r)\neq1'$ and $\eta(s)\neq1'$, then (3) says that $\eta(r)\neq\eta(s)$ and (4) says that $\eta$ restricts to a surjective homomorphism from the rank-two standard parabolic subgroup $W_{\set{r,s}}$ to the rank-two standard parabolic subgroup $W'_{\set{\eta(r),\eta(s)}}$.
Thus $|W_{\set{r,s}}|\ge|W'_{\set{\eta(r),\eta(s)}}|$.
This is equivalent to (5).
\end{proof}

Given $J\subseteq S$, for any $w\in W$, there is a unique factorization $w=w_J\,\cdot \!\phantom{.}^J\! w$ that maximizes 
$\ell(w_J)$ subject to the constraints $\ell(w_J)+\ell(\!\!\phantom{.}^J\! w)=\ell(w)$ and $w_J\in W_J$.
The element $w_J$ is also the unique element of $W$ (and the unique element of $W_J$) whose inversion set is $\inv(w)\cap W_J$.
Let $\eta_J:W\to W_J$ be the map sending $w$ to $w_J$.
We call $\eta_J$ a \newword{parabolic homomorphism}.
We now illustrate parabolic homomorphisms in terms of the usual combinatorial representations for types $A_n$ and $B_n$ and in terms of the usual geometric representation for type $H_3$.

\begin{example}\label{An para}
We realize a Coxeter group of type $A_n$ in the usual way as the symmetric group $S_{n+1}$ of permutations of $\set{1,\ldots,n+1}$, with simple reflections $S=\set{s_i:i=1,\ldots n}$, where each $s_i$ is the adjacent transposition $(i\,\,\,\,i\!+\!1)$.
We write permutations $\pi$ in one-line notation $\pi=\pi_1\pi_2\cdots\pi_{n+1}$ with each $\pi_i$ standing for $\pi(i)$.
Choose some $k\in\set{1,\ldots,n}$ and let $J=\set{s_1,\ldots,s_{k-1}}\cup\set{s_{k+1},\ldots,s_n}$.
The map $\eta_J$ corresponds to deleting the vertex $s_k$ from the Coxeter diagram for $A_n$, thus splitting it into components, one of type $A_{k-1}$ and one of type $A_{n-k}$.
The map $\eta_J$ takes $\pi$ to $(\sigma,\tau)\in S_k\times S_{n+1-k}$, where $\sigma$ is the permutation of $\set{1,2,\ldots,k}$ given by deleting from the sequence $\pi_1\pi_2\cdots\pi_{n+1}$ all values greater than $k$ and $\tau$ is the permutation of $\set{1,2,\ldots,n+1-k}$ given by deleting from $\pi_1\pi_2\cdots\pi_{n+1}$ all values less than $k+1$ and subtracting $k$ from each value.
For example, if $n=7$ and $k=3$, then $\eta_J(58371426)=(312,25413)$.
The map $\eta_J$ is similarly described for more general~$J$.
\end{example}

\begin{example}\label{Bn para}
As usual, we realize a Coxeter group of type $B_n$ as the group of \newword{signed permutations}.
These are permutations $\pi$ of $\set{\pm1,\pm2,\ldots,\pm n}$ with $\pi(-i)=-\pi(i)$ for all $i$.
The simple generators are the permutations $s_0=(1\,\,\,-\!1)$ and $s_i=(-i\!-\!1\,\,\,\,-i)(i\,\,\,\,i\!+\!1)$ for $i=1,\ldots,n-1$.
A signed permutation $\pi$ is determined by its one-line notation $\pi=\pi_1\pi_2\cdots\pi_n$, where each $\pi_i$ again stands for $\pi(i)$.
For $J=\set{s_0,\ldots,s_{k-1}}\cup\set{s_{k+1},\ldots,s_{n-1}}$, the map $\eta_J$ corresponds to deleting the vertex $s_k$, splitting the diagram of $B_n$ into components, of types $B_k$ and $A_{n-k-1}$.
The map $\eta_J$ takes $\pi$ to $(\sigma,\tau)$, where $\sigma$ is the signed permutation whose one-line notation is the restriction of the sequence $\pi_1\pi_2\cdots\pi_n$ to values with absolute value less than $k+1$ and $\tau$ is the permutation given by restricting the sequence $(-\pi_n)(-\pi_{n-1})\cdots(-\pi_1)\pi_1\cdots\pi_{n-1}\pi_n$ to positive values greater than $k$, and then subtracting $k$ from each value.
For example, if $n=8$, $k=4$, and $\pi=(-4)(-2)71(-8)(-6)5(-3)$, then $\eta_J(\pi)=((-4)(-2)1(-3),2431)$.
\end{example}


\begin{example}\label{H3 para}
For $W$ of type $H_3$, we give a geometric, rather than combinatorial, description of parabolic homomorphisms.
Figure~\ref{H3 para fig}.a shows the reflecting planes of a reflection representation of $W$.
\begin{figure}
\begin{tabular}{ccc}
\includegraphics{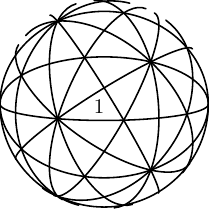}&&\includegraphics{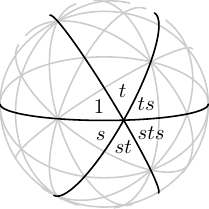}\\
(a)&&(b)\\[8pt]
\includegraphics{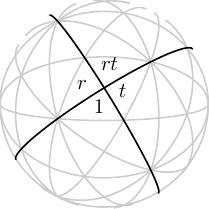}&&\includegraphics{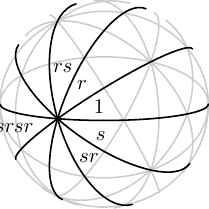}\\
(c)&&(d)
\end{tabular}
\caption{
a:  Reflecting planes for $W$ of type $H_3$. 
b:  Reflecting planes for $W_{\set{s,t}}$.
c:  Reflecting planes for $W_{\set{r,t}}$.
d:  Reflecting planes for $W_{\set{r,s}}$.
}
\label{H3 para fig}
\end{figure}
Each plane is represented by its intersection with a unit sphere about the origin.
The sphere is considered to be opaque, so that we only see the side of the sphere that is closest to us.
The spherical triangles traced out on the sphere (including those on the back of the sphere that we can't see) are in bijection with the elements of $W$.
Specifically, the triangle corresponding to $1$ is marked, and each $w\in W$ corresponds to the image of the triangle marked $1$ under the action of $w$.
Taking $S=\set{r,s,t}$ with $m(r,s)=5$, $m(s,t)=3$ and $m(r,t)=2$, the reflecting planes of the reflections $S$ are the three planes that bound the triangle marked $1$.
The plane corresponding to $s$ is nearly horizontal in the picture and the plane for $r$ intersects the plane for $s$ at the left of the triangle marked $1$.

The parabolic congruence that deletes the vertex $r$ can be seen geometrically in Figure~\ref{H3 para fig}.b, which shows the reflecting planes for the standard parabolic subgroup $W_{\set{s,t}}$.
The sectors cut out by these planes correspond to the elements of $W_{\set{s,t}}$, as shown in the picture.
The parabolic congruence maps an element $w\in W$, corresponding to a triangle $\mathcal{T}$, to the element of $W_{\set{s,t}}$ labeling the sector containing $\mathcal{T}$.
Similar pictures, Figures~\ref{H3 para fig}.c--d, describe the parabolic congruences deleting the vertices $s$ and $t$ respectively.
(Some labels are left out of the representation of $W_{\set{r,s}}$ in Figure~\ref{H3 para fig}.d.)
\end{example}

The following theorem is a concatenation of \cite[Proposition~6.3]{congruence} and \cite[Corollary~6.10]{congruence}.
The fact that $\eta_J$ is a lattice homomorphism was also established in~\cite{Jed}.

\begin{theorem}\label{para cong}
If $J\subseteq S$, then $\eta_J$ is a surjective homomorphism.
Its fibers constitute the finest lattice congruence on $W$ with $1\equiv s$ for all $s\in S\setminus J$.
\end{theorem}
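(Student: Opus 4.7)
My plan is to work throughout with the inversion-set characterization $\inv(w_J) = \inv(w) \cap T_J$, where $T_J = T \cap W_J$ denotes the reflections of the parabolic subgroup. Combined with the standard fact that $W_J = [1,\,w_0(J)]$ is a lower interval, hence a sublattice, of the weak order, this characterizes $\eta_J(w)$ as the unique largest element of $W_J$ weakly below $w$. Surjectivity is then immediate (since $\eta_J(u) = u$ for $u \in W_J$), $\eta_J$ is order-preserving because $w \mapsto \inv(w) \cap T_J$ is monotone in $w$, and meet-preservation follows from the universal property: both $(x\wedge y)_J$ and $x_J \wedge y_J$ are the largest element of $W_J$ sitting below both $x$ and $y$.

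For join-preservation I would invoke the standard ``interval criterion'': a partition of a finite lattice is a congruence if and only if each class is an interval and both the class-bottom map and the class-top map are order-preserving. Using the length-additive factorization $W = W_J \cdot {}^J W$ together with the existence of a unique maximum ${}^J w_0$ of ${}^J W$ (namely ${}^J w_0 = w_0(J)^{-1} w_0$), the fiber of $\eta_J$ over $u \in W_J$ is $\{u \cdot v : v \in {}^J W\}$, which coincides with the interval $[u,\,u \cdot {}^J w_0]$ of the weak order. The class-bottom map is $\eta_J$ composed with the inclusion $W_J \hookrightarrow W$, which is order-preserving. The class-top map factors as $w \mapsto w_J \mapsto w_J \cdot {}^J w_0$; here $u \mapsto u \cdot {}^J w_0$ is an order-embedding of $W_J$ into $W$ (using $\ell(u \cdot {}^J w_0) = \ell(u) + \ell({}^J w_0)$), so the composition is order-preserving. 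Hence the fiber partition is a lattice congruence and $\eta_J$ is a surjective lattice homomorphism.

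For the ``finest'' claim, the fiber partition identifies $1$ with each $s \in S \setminus J$ because $s_J = 1$. Conversely, any congruence $\Theta$ on $W$ satisfying $1 \equiv_\Theta s$ for all $s \in S \setminus J$ must refine the fiber partition. The cleanest route invokes the polygonal forcing rule for weak orders (\cite[Theorem~10-3.7]{regions10}) to propagate the identifications $1 \equiv s$ to contractions of every cover $u \covered u \cdot s$ whose associated reflection $u s u^{-1}$ lies outside $T_J$. These are exactly the covers interior to a single fiber of $\eta_J$: indeed, $\inv(us) \cap T_J = \inv(u) \cap T_J$ is equivalent to $u s u^{-1} \notin T_J$. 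Since each fiber is a connected interval of the weak order, every pair of fiber-elements is linked by such covers, forcing $\Theta$-equivalence throughout the fiber.

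The main obstacle is join-preservation: verifying that fibers really coincide with the intervals $[u,\, u \cdot {}^J w_0]$ and that the top map is order-preserving rests critically on the length-additive structure of the parabolic factorization, and invoking the interval criterion assumes some lattice-theoretic background that needs to be pinned down. The forcing step in the ``finest'' part is also delicate in that it requires tracking how the elementary identifications $1 \equiv s$ propagate through polygons of the weak order to cover relations at arbitrary depth within a fiber.
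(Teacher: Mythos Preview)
The paper does not prove this theorem; it cites \cite[Proposition~6.3, Corollary~6.10]{congruence}, where the argument runs through the description of $\Irr(\Con(W))$ via shards (the same machinery the present paper later uses for Theorem~\ref{edge cong}).  Your approach is more elementary and self-contained, and it is essentially correct, though two steps need sharpening.

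For the homomorphism assertion, your claim that $u\mapsto u\cdot{}^J w_0$ is an order-embedding of $W_J$ into $W$ is right, but length-additivity alone does not justify it: right multiplication by a fixed element need not preserve weak order even when lengths add.  The missing ingredient is that $\inv({}^J w_0)=T\setminus T_J$, and since $u\in W_J$ normalizes $T_J$ one gets $\inv(u\cdot{}^J w_0)=\inv(u)\cup(T\setminus T_J)$, which is visibly monotone in~$u$.  (This same computation is what verifies that the fiber over $u$ really equals the interval $[u,\,u\cdot{}^J w_0]$.)

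For the ``finest'' assertion, your polygon-forcing sketch does go through, but the induction requires a case split you have not recorded.  Given $u\lessdot us$ with $t=usu^{-1}\notin T_J$ and a right descent $r$ of $u$, descend to the minimum $w$ of $uW_{\{r,s\}}$ and set $a=w^{-1}u$.  If $u\lessdot us$ is a \emph{side} edge of the polygon $[w,\,w\cdot w_0(\{r,s\})]$, then it is forced by \emph{either} bottom edge; since the dihedral reflection $asa^{-1}$ is non-simple, $\beta_t$ is a strictly positive combination of $\beta_{wrw^{-1}}$ and $\beta_{wsw^{-1}}$, so at least one of those reflections lies outside $T_J$ and induction applies to the corresponding bottom edge.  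If $u\lessdot us$ is a \emph{top} edge, only the opposite bottom edge forces it; here one checks that $asa^{-1}\in\{r,s\}$, so that $t$ actually equals the reflection of precisely the needed bottom edge.  This case analysis is exactly what the shard argument in \cite{congruence} packages more uniformly.  (A minor slip: where you write that any such $\Theta$ ``must refine the fiber partition'' you mean the reverse---the fiber partition refines~$\Theta$.)
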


Proposition~\ref{basic facts} and Theorem~\ref{para cong} lead to the proof of Theorem~\ref{para factor}.
To give the proof, we need the following basic observation about a congruence $\Theta$ on a finite lattice $L$:
Congruence classes are intervals, and the quotient $L/\Theta$ is isomorphic to the subposet of $L$ induced by the elements of $L$ that are the bottom elements of congruence classes.
Let $\Theta_J$ be the congruence whose classes are the fibers of $\eta_J$.
An element $w\in W$ is at the bottom of its $\Theta_J$-class if and only if $w\in W_J$.
Thus $W/\Theta_J$ is isomorphic to $W_J$.

\begin{proof}[Proof of Theorem~\ref{para factor}]
If $\Theta$ is the lattice congruence on $W$ whose congruence classes are the fibers of $\eta$, then Proposition~\ref{basic facts}(1) says that $1\equiv s$ for all $s\in S\setminus J$.
Theorem~\ref{para cong} says that the congruence $\Theta_J$, determined by the fibers of the homomorphism $\eta_J$, is a refinement of the congruence $\Theta$.
Thus $\eta$ factors as $\eta=\eta'\circ\nu$, where $\nu:W\to W/\Theta_J$ is the natural map.
The map $\eta'$ maps a $\Theta_J$-class to $\eta(w)$ where $w$ is any element of the $\Theta_J$-class.
However, each $\Theta_J$-class contains a unique element of $W_J$, so we can replace $\nu$ with the map $\eta_J$ and replace $\eta'$ with the restriction of $\eta$ to $W_J$.
By Proposition~\ref{basic facts}(2--3), $\eta|_{W_J}$ restricts to a bijection from $J$ to~$S'$. 
\end{proof}

Theorem~\ref{para factor} reduces the problem of classifying surjective homomorphisms $\eta$ between weak orders to the special case of classifying compressive homomorphisms between weak orders.
As in the introduction, we may as well restrict to the case where $S=S'$ and $\eta$ restricts to the identity on $S$.
For convenience, we rewrite some of the assertions of Proposition~\ref{basic facts} in the case where $\eta$ is compressive:

\begin{proposition}\label{diagram facts}
Let $(W,S)$ and $(W',S)$ be finite Coxeter systems.
If $\eta:W\to W'$ is a compressive homomorphism fixing $S$ pointwise, then 
\begin{enumerate}
\item If $J\subseteq S$, then $\eta$ restricts to a surjective homomorphism $W_J\to W'_J$.
\item $m'(r,s)\le m(r,s)$ for each pair $r,s\in S$.
\end{enumerate}
\end{proposition}

Theorem~\ref{para factor} shows that surjective homomorphisms correspond to deleting vertices of the diagram and then applying a compressive homomorphism, and the second assertion of Proposition~\ref{diagram facts} is an additional step towards Theorem~\ref{main}:
It shows that the compressive homomorphism decreases edge labels and/or erases edges.
The remainder of the paper is devoted to classifying compressive homomorphisms between finite Coxeter groups.

\section{Erasing edges}\label{erase edge sec}
In this section, we begin the classification of compressive homomorphisms by considering the simplest case, the case of compressive homomorphisms that erase edges but otherwise do not decrease edge labels.
That is, we consider the case where, for each $r,s\in S$, either $m'(r,s)=m(r,s)$ or $m'(r,s)=2$.

Recall that the diagram of a finite Coxeter group is a forest (a graph without cycles), so removing any edge breaks a connected component of the diagram into two pieces.
Given a set $E$ of edges of the diagram for $W$, write $S$ as a disjoint union of sets $J_1,J_2,\ldots,J_k$ such that each set $J_i$ is the vertex set of a connected component of the graph obtained by deleting the edges $E$ from the diagram.
Define $\eta_E$ to be the map from $W$ to $W_{J_1}\times W_{J_2}\times\cdots\times W_{J_k}$ that sends $w\in W$ to $(w_{J_1},w_{J_2},\ldots,w_{J_k})$.
We call $\eta_E$ an \newword{edge-erasing homomorphism}.
Beginning in this section and finishing in Section~\ref{shard sec}, we prove the following theorem.

\begin{theorem}\label{edge factor}
Let $\eta:W\to W'$ be a compressive homomorphism fixing $S$ pointwise.
Let $E$ be any set of edges in the diagram of $W$ such that each edge $r$---$s$ in $E$ has $m'(r,s)=2$.
Let $J_1,J_2,\ldots,J_k$ be the vertex sets of the connected components of the graph obtained by deleting the edges $E$ from the diagram for $W$.
In particular, $W'\cong W'_{J_1}\times W'_{J_2}\times\cdots\times W'_{J_k}$.
Then $\eta$ factors as $\eta'\circ\eta_E$, where ${\eta':W_{J_1}\times W_{J_2}\times\cdots\times W_{J_k}}\to W'$ is the compressive homomorphism with $\eta'(w_1,\ldots, w_k)=(\eta(w_1),\ldots,\eta(w_k))$.
\end{theorem}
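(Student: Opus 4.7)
The plan is to verify the factorization $\eta = \eta' \circ \eta_E$ by checking, for each component $J_i$, that the $W'_{J_i}$-coordinate of $\eta(w)$ equals $\eta(w_{J_i})$. This splits the theorem into an easy structural observation about $W'$ and a more substantive application of the characterization of parabolic congruences from Theorem~\ref{para cong}.

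First I would verify that $W' \cong W'_{J_1} \times \cdots \times W'_{J_k}$, with the isomorphism sending $w' \in W'$ to its tuple of parabolic projections $(\eta_{J_1}(w'), \ldots, \eta_{J_k}(w'))$ in $W'$. This amounts to checking that $m'(r,s) = 2$ whenever $r \in J_i$ and $s \in J_j$ with $i \neq j$: either $r$---$s$ is an edge of $E$ (so $m'(r,s) = 2$ by hypothesis), or $r$ and $s$ lie in different components of the original diagram of $W$ (so $m(r,s) = 2$ and Proposition~\ref{diagram facts}(2) forces $m'(r,s) = 2$). Given this product decomposition, it suffices to prove that $\eta_{J_i}^{W'}(\eta(w)) = \eta(w_{J_i})$ for each $i$ and each $w \in W$, where $\eta_{J_i}^{W'}$ denotes the parabolic projection in~$W'$.

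The heart of the argument is as follows. The composition $\eta_{J_i}^{W'} \circ \eta : W \to W'_{J_i}$ is a surjective lattice homomorphism that sends every $s \in S \setminus J_i$ to $1'$, since such an $s$ lies in $W'_{J_j}$ for some $j \neq i$ and is killed by the projection onto $W'_{J_i}$. By the minimality statement in Theorem~\ref{para cong}, the parabolic congruence $\Theta_{J_i}^W$ associated to $\eta_{J_i}^W$ refines the fiber congruence of $\eta_{J_i}^{W'} \circ \eta$, so there is a homomorphism $g_i : W_{J_i} \to W'_{J_i}$ with $\eta_{J_i}^{W'} \circ \eta = g_i \circ \eta_{J_i}^W$. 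To identify $g_i$ with $\eta|_{W_{J_i}}$, I would invoke Proposition~\ref{basic facts}(4), which (using $\eta(J_i) = J_i$) shows that $\eta$ restricts to a surjection $W_{J_i} \to W'_{J_i}$. Then for $w \in W_{J_i}$ we have $\eta_{J_i}^W(w) = w$ and $\eta(w) \in W'_{J_i}$, so $g_i(w) = \eta_{J_i}^{W'}(\eta(w)) = \eta(w)$. Combining, $\eta_{J_i}^{W'}(\eta(w)) = \eta(w_{J_i})$ for every $w \in W$, which is exactly the $i$-th coordinate of $\eta'(\eta_E(w))$.

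The step I expect to require the most care is the identification of $g_i$ with $\eta|_{W_{J_i}}$ in the previous paragraph: it is essential that $\eta$ actually sends $W_{J_i}$ into $W'_{J_i}$ rather than into some larger standard parabolic subgroup of $W'$, and this is precisely the content of Proposition~\ref{basic facts}(4) applied with $J = J_i$. Beyond this, everything else is routine bookkeeping, including checking that $\eta'$ is a well-defined homomorphism, since it is simply the coordinate-wise product of the surjective homomorphisms $\eta|_{W_{J_i}} : W_{J_i} \to W'_{J_i}$ supplied by Proposition~\ref{basic facts}(4).
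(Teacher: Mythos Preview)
Your proof is correct and takes a genuinely different route from the paper's. The paper deduces Theorem~\ref{edge factor} from Theorem~\ref{edge cong}, which characterizes the fibers of $\eta_E$ as the \emph{finest} congruence with $r\equiv\alt_{m(r,s)-1}(r,s)$ and $s\equiv\alt_{m(r,s)-1}(s,r)$ for each edge in $E$; one then checks that the fiber congruence of $\eta$ also has these equivalences, so $\Theta_E$ refines it and $\eta$ factors through $\eta_E$. The proof of Theorem~\ref{edge cong}, however, requires the shard machinery developed in Section~\ref{shard sec}. Your argument bypasses this entirely: you work one coordinate at a time, composing $\eta$ with the parabolic projection $\eta_{J_i}^{W'}$ on the target side and invoking only the minimality statement for \emph{parabolic} congruences (Theorem~\ref{para cong}) to factor through $\eta_{J_i}^W$. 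That is strictly more elementary, since Theorem~\ref{para cong} is quoted from earlier work and needs no shards. The tradeoff is that the paper's route simultaneously yields Theorem~\ref{edge cong}, whose explicit description of the generators of $\Theta_E$ is used independently later (for instance, in the Cambrian lattice section), whereas your argument proves only the factorization statement.
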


The proof of Theorem~\ref{edge factor} is similar to the proof of Theorem~\ref{para factor}.
We characterize the congruence associated to $\eta_E$ as the finest congruence containing certain equivalences and conclude that any homomorphism that erases the edges $E$ factors through~$\eta_E$.

Let $r$ and $s$ be distinct elements of $S$ and let $m=m(r,s)$.
Suppose $r$ and $s$ form an edge in $E$, or in other words suppose $m\ge 3$.
Then the standard parabolic subgroup $W_{\set{r,s}}$ is the lower interval $[1,w_0(\set{r,s})]$, consisting of two chains:  $1\covered r\covered rs\covered rsr\covered\cdots\covered w_0(\set{r,s})$ and $1\covered s\covered sr\covered srs\covered\cdots\covered w_0(\set{r,s})$.
We define $\alt_k(r,s)$ to be the word with $k$ letters, starting with $r$ and then alternating $s$, $r$, $s$, etc.
Thus the two elements covered by $w_0(\set{r,s})$ are $\alt_{m-1}(r,s)$ and $\alt_{m-1}(s,r)$.
The key to the proof of Theorem~\ref{edge factor} is the following theorem.

\begin{theorem}\label{edge cong}
If $E$ is a set of edges of the diagram of $W$, then $\eta_E$ is a compressive homomorphism.
Its fibers constitute the finest congruence with $r\equiv\alt_{m(r,s)-1}(r,s)$ and $s\equiv\alt_{m(r,s)-1}(s,r)$ for all edges $r$---$s$ in $E$.
\end{theorem}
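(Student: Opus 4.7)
The plan is to prove both assertions in tandem: that $\eta_E$ is a compressive homomorphism, and that its fiber congruence $\Theta_E$ is the finest congruence with $r\equiv\alt_{m(r,s)-1}(r,s)$ and $s\equiv\alt_{m(r,s)-1}(s,r)$ for every edge $r$---$s$ in $E$. For the compressive claim, each parabolic projection $\eta_{J_i}$ is a surjective lattice homomorphism by Theorem~\ref{para cong}, so the product map $\eta_E=(\eta_{J_1},\ldots,\eta_{J_k})$ is a lattice homomorphism into $W_{J_1}\times\cdots\times W_{J_k}$. For any $w_i\in W_{J_i}$, the inversion set $\inv(w_i)$ lies in the reflection set of $W_{J_i}$ and is disjoint from those of $W_{J_j}$ for $j\ne i$, so $\eta_E(w_i)$ is the tuple with $w_i$ in coordinate $i$ and identities elsewhere. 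Applying the homomorphism property to the join in $W$ yields
\[\eta_E(w_1\vee w_2\vee\cdots\vee w_k)=(w_1,w_2,\ldots,w_k),\]
proving surjectivity. Each simple reflection $s\in J_i\subseteq S$ is sent to the corresponding atom of the product lattice, so $\eta_E$ restricts to a bijection on atoms and is compressive.

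For one direction of the characterization, fix an edge $r$---$s$ in $E$ with $r\in J_a$ and $s\in J_b$ (necessarily $a\ne b$), and set $m=m(r,s)$. I would verify directly that $\eta_E(r)=\eta_E(\alt_{m-1}(r,s))$. Since $\alt_{m-1}(r,s)\in W_{\{r,s\}}$, its inversion set is a subset of the reflection set of $W_{\{r,s\}}$. Intersecting with the reflection set of $W_{J_i}$ gives $\emptyset$ for $i\notin\{a,b\}$; gives $\{r\}$ for $i=a$ (since $r$ is a left descent of $\alt_{m-1}(r,s)$); and gives $\emptyset$ for $i=b$, because left-multiplying $\alt_{m-1}(r,s)$ by $s$ produces $\alt_m(s,r)=w_0(\{r,s\})$, which is longer. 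Hence $\eta_E(\alt_{m-1}(r,s))$ carries $r$ in coordinate $a$ and identities elsewhere, matching $\eta_E(r)$. The equivalence $\eta_E(s)=\eta_E(\alt_{m-1}(s,r))$ follows by the symmetric argument.

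The main obstacle is the reverse direction: showing that any congruence $\Theta$ containing the listed generators already contains $\Theta_E$. My plan is to use polygonal forcing. For each edge $r$---$s$ in $E$, the equivalence $r\equiv\alt_{m-1}(r,s)$ forces the whole chain interval $[r,\alt_{m-1}(r,s)]$ of side edges on the $r$-side of the dihedral polygon $W_{\{r,s\}}$ to be contracted; together with the symmetric relation on the $s$-side, every side edge of that polygon is contracted. Each such side edge is a top or bottom edge of various larger polygons in $W$, so the rule of Figure~\ref{poly force} propagates contractions throughout the lattice. To match the propagation against $\Theta_E=\bigcap_i\Theta_{J_i}$, I intend to use the shard-theoretic framework developed in Section~\ref{shard sec}: associate to each generating equivalence the set of shards it cuts, close under the forcing relation on shards, and verify that the resulting down-set agrees with the set of shards cut by $\Theta_E$. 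Because congruences of the weak order correspond bijectively to suitable down-sets of shards, this matching would complete the proof. Carrying it out is where the bulk of the technical work lies.
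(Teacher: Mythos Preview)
Your treatment of the first assertion (that $\eta_E$ is a compressive homomorphism) and of the ``easy'' direction (that $\Theta_E$ contains the listed equivalences) matches the paper's argument and is correct.

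The gap is in the hard direction. Saying ``close under the forcing relation on shards, and verify that the resulting down-set agrees with the set of shards cut by $\Theta_E$'' is not yet an argument: the entire content of the proof lies in \emph{how} one verifies this agreement, and your proposal supplies no mechanism. The paper provides two ingredients you are missing. First, an explicit description of which shards $\Theta_E$ removes: an edge $ws\covered w$ is contracted iff the reflection $t=wsw^{-1}$ lies in none of the $W_{J_i}$ (since $\inv(w_{J_i})=\inv(w)\cap W_{J_i}$), so a shard is removed iff its ambient hyperplane is $H_t$ for such a $t$. Second, and this is the crux, a reason why every removed shard $\Sigma$ that is not one of the generators $\Sigma_k(r,s)$ receives an arrow from another removed shard. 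Each facet of $\Sigma$ is defined by two basic hyperplanes $H_{t'},H_{t''}$ of a rank-two subarrangement; if either reflection lies in no $W_{J_i}$, the corresponding arrowing shard is removed and we are done. If instead $t'\in W_{J_i}$ and $t''\in W_{J_j}$, then $i\ne j$ (else $t\in W_{J_i}$), and since $\beta_t$ is a positive combination of $\beta_{t'}$ and $\beta_{t''}$ whose simple-root supports are disjoint, the pair $(\beta_{t'},\beta_{t''})$ is determined up to scaling by $\beta_t$. Hence at most \emph{one} facet of $\Sigma$ can be ``bad'' in this sense. If $\Sigma$ has at least two facets, some other facet produces a removed shard arrowing $\Sigma$; if $\Sigma$ has exactly one facet, Lemma~\ref{half} forces $\Sigma=\Sigma_k(r,s)$ for an edge $r$---$s$ in $E$; and Lemma~\ref{whole} rules out zero facets. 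Without this root-support uniqueness argument (or something equivalent), your plan to ``verify the down-sets agree'' has no traction.
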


Each map $\eta_{J_i}$ is a lattice homomorphism by Theorem~\ref{para cong}, and we easily conclude that $\eta_E$ is a homomorphism as well.
To see that $\eta_E$ is surjective, consider $(w_1,\ldots,w_k)\in W_{J_1}\times W_{J_2}\times\cdots\times W_{J_k}$.
Then each $w_i$ is also an element of $W$.
Furthermore, $\eta_{J_i}(w_j)=1$ if $j\neq i$.
Thus the fact that $\eta_E$ is a lattice homomorphism implies that $\eta_E(\Join_{i=1}^kw_i)=(w_1,\ldots,w_k)$.
We have established the first assertion of Theorem~\ref{edge cong}.
The second assertion requires more background on lattice congruences of the weak order.
This background and the proof of the second assertion is given in Section~\ref{shard sec}.
For now, we show how Theorem~\ref{edge cong} is used to prove Theorem~\ref{edge factor}.

\begin{proof}[Proof of Theorem~\ref{edge factor}, given Theorem~\ref{edge cong}]
Let $r$---$s$ be an edge in $E$.  
In the Coxeter group $W'$, $r\join s=rs$.
By Proposition~\ref{diagram facts}, the restriction of $\eta$ to the interval $[1,w_0(\set{r,s})]$ in $W$ is a surjective lattice homomorphism to the interval $[1',rs]$ in~$W'$.
By hypothesis, $\eta$ fixes $r$ and $s$.
Since $\eta$ is order-preserving, $\eta(\alt_{m(r,s)-1}(r,s))$ is either $r$ or $rs$.
But if $\eta(\alt_{m(r,s)-1}(r,s))=rs$, then $\eta(s\meet\alt_{m(r,s)-1}(r,s))=rs\meet s=s$.
But $s\meet\alt_{m(r,s)-1}(r,s)=1$, and $\eta(1)=1'$, so we conclude that $\eta(\alt_{m(r,s)-1}(r,s))=r$.
Similarly, $\eta(\alt_{m(r,s)-1}(s,r))=s$.
Thus if $\Theta$ is the lattice congruence on $W$ whose congruence classes are the fibers of $\eta$, then $r\equiv\alt_{m(r,s)-1}(r,s)$ and $s\equiv\alt_{m(r,s)-1}(s,r)$ modulo $\Theta$.
Let $\Theta_E$ be the lattice congruence whose classes are the fibers of $\eta_E$.
Theorem~\ref{edge cong} says that the congruence $\Theta_E$ is a refinement of the congruence $\Theta$.

Thus $\eta$ factors through the natural map $\nu:W\to W/\Theta_E$.
Equivalently, we can factor $\eta$ as $\eta'\circ\eta_E$, where $\eta'$ maps $(w_1,\ldots,w_k)\in W_{J_1}\times\cdots\times W_{J_k}$ to $\eta(w)$, where $w$ is any element in the $\eta$-fiber of $(w_1,\ldots,w_k)$.
Specifically, we can take $\eta'(w_1,\ldots,w_k)=\eta(\Join_{i=1}^kw_i)$.
Since $\eta$ is a homomorphism, the latter is $\Join_{i=1}^k\eta(w_i)$, which equals $(\eta(w_1),\ldots,\eta(w_k))$ because $W'\cong W'_{J_1}\times W'_{J_2}\times\cdots\times W'_{J_k}$.
It now follows from Proposition~\ref{diagram facts} that $\eta'$ is a surjective homomorphism.
\end{proof}

Theorem~\ref{edge factor} has the following immediate corollary.

\begin{corollary}\label{existence uniqueness simply}
Let $(W,S)$ and $(W',S)$ be finite, simply laced finite Coxeter systems such that the diagram of $W'$ is obtained from the diagram of $W$ by erasing a set $E$ of edges.
Then $\eta_E$ is the unique compressive homomorphism from $W$ to $W'$ fixing $S$ pointwise.
\end{corollary}

We conclude this section with some examples of edge-erasing homomorphisms.

\begin{example}\label{An erase edge}
We describe edge-erasing homomorphisms from $W$ of type $A_n$ in terms of the combinatorial realization described in Example~\ref{An para}.
If $E=\set{s_{k}\text{---}s_{k+1}}$, the edge-erasing homomorphism $\eta_E$ maps $\pi\in S_{n+1}$ to $(\sigma,\tau)\in S_{k+1}\times S_{n-k+1}$, where $\sigma$ is the restriction of the sequence $\pi_1\pi_2\cdots\pi_{n+1}$ to values $\le k+1$ and $\tau$ is obtained by restricting $\pi_1\pi_2\cdots\pi_{n+1}$ to values $\ge k+1$ and then subtracting $k$ from each value.
For example, if $n=7$ and $k=3$, then $\eta_E(58371426)=(3142,25413)$.
\end{example}

\begin{example}\label{Bn erase edge}
The description for type $B_n$ is similar.
If $E=\set{s_{k-1}\text{---}s_{k}}$, the edge-erasing homomorphism $\eta_E$ maps a signed permutation $\pi$ to $(\sigma,\tau)\in B_k\times S_{n-k+1}$, where $\sigma$ is the restriction of the sequence $\pi_1\pi_2\cdots\pi_{n+1}$ to entries $\pi_i$ with ${|\pi_i|\le k}$ and $\tau$ is obtained by restricting $(-\pi_n)(-\pi_{n-1})\cdots(-\pi_1)\pi_1\cdots\pi_{n-1}\pi_n$ to values $\ge k$ and then subtracting $k-1$ from each value.
For example, if $n=8$, $k=4$, and $\pi=(-4)(-2)71(-8)(-6)5(-3)$, then $\eta(\pi)=((-4)(-2)1(-3),35142)$.
\end{example}

\begin{example}\label{H3 erase edge}
We describe edge-erasing homomorphisms from $W$ of type $H_3$ in the geometric context introduced in Example~\ref{H3 para}.
Figure~\ref{H3 edge fig}.a--b represent the two edge-erasing homomorphisms.
\begin{figure}
\begin{tabular}{ccc}
\includegraphics{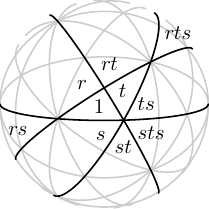}&&\includegraphics{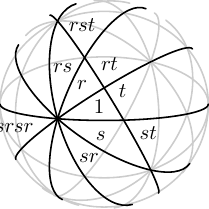}\\
(a)&&(b)\\
\end{tabular}
\caption{
a:  The homomorphism that erases the edge $r$---$s$ in type $H_3$. 
b:  The homomorphism that erases the edge $s$---$t$ in type~$H_3$.
}
\label{H3 edge fig}
\end{figure}  
Thus the homomorphism erasing $r$---$s$ maps each element $w\in W$, corresponding to a triangle $\mathcal{T}$, to the element of $W_{\set{r}}\times W_{\set{s,t}}$ labeling the region in Figure~\ref{H3 edge fig}.a containing $\mathcal{T}$.
Figure~\ref{H3 edge fig}.b represents the homomorphism erasing $s$---$t$ similarly.
In each picture, some labels are omitted or belong on the invisible side of the sphere.
\end{example}

\section{Lattice congruences of the weak order}\label{shard sec}
In this section, we quote results that give us the tools to prove Theorem~\ref{edge cong} and to complete the classification of compressive homomorphisms between finite Coxeter groups.
We prove Theorem~\ref{edge cong} at the end of this section and complete the classification in later sections.
For any results that are stated here without proof or citation, proofs can be found in \cite{congruence} and/or \cite[Section~9-5]{regions9}.

We begin with more details on congruences on a finite lattice $L$.
Recall from Section~\ref{intro} that a congruence $\Theta$ on $L$ is uniquely determined by the set of edges \newword{contracted} by $\Theta$ (the set of edges $x\covered y$ such that $x\equiv y$ modulo $\Theta$).
In fact, $\Theta$ is uniquely determined by a smaller amount of information.
An element $j$ of $L$ is \newword{join-irreducible} if it covers exactly one element $j_*$.
We say that $\Theta$ \newword{contracts} the join-irreducible element $j$ if $j\equiv j_*$ modulo $\Theta$.
The congruence $\Theta$ is determined by the set of join-irreducible elements that $\Theta$ contracts. 

The set $\Con(L)$ of all congruences on $L$ is a sublattice of the lattice of set partitions of $L$. 
In fact, $\Con(L)$ is a distributive lattice. 
We write $\Irr(\Con(L))$ for the set of join-irreducible congruences on $\Con(L)$.
By the Fundamental Theorem of Finite Distributive Lattices (see e.g.\ \cite[Theorem~3.4.1]{EC1}), $\Con(L)$ is isomorphic to inclusion order on the set of order ideals in $\Irr(\Con(L))$.

The weak order on a finite Coxeter group $W$ has a special property called \newword{congruence uniformity}, or sometimes called \newword{boundedness}.
(This was first proved in \cite[Theorem~6]{bounded}.  See also \cite[Theorem~27]{hyperplane}.)
The definition of congruence uniformity is not necessary here, but congruence uniformity means in particular that the join-irreducible congruences on $W$ (i.e.\ the join-irreducible elements of $\Con(L)$) are in bijection with the join-irreducible elements of $W$ itself. 
The join-irreducible elements of $W$ are the elements $j$ such that there exists a unique $s\in S$ with $\ell(js)<\ell(j)$.
For each join-irreducible element $j$ of $W$, the corresponding join-irreducible element $\Cg(j)$ of $\Con(W)$ is the unique finest congruence that contracts~$j$.
Thus $\Irr(\Con(W))$ can be thought of as a partial order on the join-irreducible elements of~$W$.
Congruences on $W$ correspond to order ideals in $\Irr(\Con(W))$.
Given a congruence $\Theta$ with corresponding order ideal $I$, the poset $\Irr(\Con(W/\Theta))$ is isomorphic to the induced subposet of $\Irr(\Con(W))$ obtained by deleting the elements~of~$I$.

The \newword{support} of an element $w$ of $W$ is the unique smallest subset $J$ of $S$ such that $w$ is in the standard parabolic subgroup $W_J$.
Given a join-irreducible element $j\in W$, the \newword{degree} of $j$ is the size of the support of $j$.
Given a set $\set{j_1,\ldots,j_k}$ of join-irreducible elements of $W$, there exists a unique finest congruence contracting all join-irreducible elements in the set.
This congruence is the join $\Cg(j_1)\join\cdots\join\Cg(j_k)$ in the congruence lattice $\Con(W)$, or equivalently, the congruence that contracts a join-irreducible element $j$ if and only if $j$ is in the ideal in $\Irr(\Con(W))$ generated by $\set{j_1,\ldots,j_k}$.
We call this the \newword{congruence generated by} $\set{j_1,\ldots,j_k}$.
A congruence on $W$ is \newword{homogeneous of degree $d$} if it is generated by a set of join-irreducible elements of degree $d$.
Abusing terminology slightly, we call a surjective homomorphism \newword{homogeneous of degree $d$} if its corresponding congruence is.

We restate Theorems~\ref{para cong} and~\ref{edge cong} with this new terminology.
(Recall that Theorem~\ref{para cong} was proven in \cite{congruence}.
We will prove Theorem~\ref{edge cong} below.)

\begin{theorem}\label{para cong ji}
If $J\subseteq S$, then $\eta_J$ is a surjective homogeneous homomorphism of degree $1$.
Its fibers constitute the congruence generated by $\set{s:s\in S\setminus J}$.
\end{theorem}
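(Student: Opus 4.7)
The plan is to observe that Theorem~\ref{para cong ji} is merely a reformulation of Theorem~\ref{para cong} in the language of join-irreducibles and generated congruences, so the work is entirely to verify the dictionary between the two formulations and then invoke Theorem~\ref{para cong}.

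First I would confirm that each simple reflection $s\in S$ is itself a join-irreducible element of $W$. This is immediate from the description of the weak order: the only element strictly below $s$ is $1$, and $1\covered s$, so $s$ covers exactly one element, namely $s_*=1$, and is therefore join-irreducible. Moreover, the support of $s$ is $\set{s}$, so $s$ has degree~$1$. Thus $\set{s:s\in S\setminus J}$ is a set of join-irreducibles of degree~$1$, and by definition any congruence generated by such a set is homogeneous of degree~$1$.

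Next I would translate the contraction condition. Because $s_*=1$, a congruence $\Theta$ contracts the join-irreducible $s$ if and only if $1\equiv s$ modulo $\Theta$. Therefore the ``finest congruence with $1\equiv s$ for all $s\in S\setminus J$'' provided by Theorem~\ref{para cong} is exactly the finest congruence that contracts each join-irreducible in $\set{s:s\in S\setminus J}$. By the description of $\Con(W)$ as order ideals in $\Irr(\Con(W))$ (together with the definition of $\Cg(j)$ as the unique finest congruence contracting $j$), the finest such congruence is precisely $\Join_{s\in S\setminus J}\Cg(s)$, which is by definition the congruence generated by $\set{s:s\in S\setminus J}$, completing the translation.

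Combining these two steps, Theorem~\ref{para cong} immediately yields that $\eta_J$ is a surjective lattice homomorphism and that its fiber congruence is the congruence generated by $\set{s:s\in S\setminus J}$; the degree-$1$ homogeneity is then built into the preceding identification of generators. There is no real obstacle in this argument, as it is just a terminology unpacking; the only point requiring a moment's care is the identification of ``finest congruence contracting a given set of join-irreducibles'' with ``congruence generated by that set,'' which is a direct consequence of the order-ideal description of $\Con(W)$ afforded by congruence uniformity.
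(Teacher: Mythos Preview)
Your proposal is correct and matches the paper's treatment: the paper explicitly introduces Theorem~\ref{para cong ji} as a restatement of Theorem~\ref{para cong} in the new terminology (noting that the latter was already proven in~\cite{congruence}), without giving a separate argument. Your unpacking of the dictionary---that each $s\in S$ is join-irreducible of degree~$1$ with $s_*=1$, so that contracting $s$ means $1\equiv s$, and that the finest congruence contracting a set of join-irreducibles is by definition the congruence generated by that set---is exactly the translation the paper has in mind.
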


\begin{theorem}\label{edge cong ji}
If $E$ is a set of edges of the diagram of $W$, then $\eta_E$ is a compressive homogeneous homomorphism of degree $2$.
Its fibers constitute the congruence generated by $\set{\alt_k(r,s):\set{r,s}\in E\text{ and }k=2,\ldots,m(r,s)-1}$.
\end{theorem}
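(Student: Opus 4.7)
The plan is to verify the two assertions separately. Compressiveness is immediate: the atoms of $W$ are $S$, the atoms of $W_{J_1}\times\cdots\times W_{J_k}$ are the tuples whose $l$th entry is some $s\in J_l$ and whose other entries are $1$, and $\eta_E$ restricts to the evident bijection between these. What remains is the substantive claim that the fiber congruence $\Theta_E$ of $\eta_E$ coincides with the congruence $\Theta^*$ on $W$ generated (in $\Con(W)$) by the set $\set{\alt_k(r,s):\set{r,s}\in E,\ 2\le k\le m(r,s)-1}$, where both orderings $(r,s)$ and $(s,r)$ of each unordered edge contribute generators.

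For $\Theta^*\le\Theta_E$, I would check directly that $\Theta_E$ contracts each generator. The diagram of a finite Coxeter group is a forest, so for $\set{r,s}\in E$ the vertices $r$ and $s$ lie in distinct components $J_l,J_{l'}$ after $E$ is removed. Hence $T(W_{\set{r,s}})\cap T(W_{J_l})=\set{r}$, $T(W_{\set{r,s}})\cap T(W_{J_{l'}})=\set{s}$, and this intersection is empty for any other component. Combining this with the elementary observations that $\inv(\alt_k(r,s))\subseteq T(W_{\set{r,s}})$, that $r\in\inv(\alt_k(r,s))$, and that $s\notin\inv(\alt_k(r,s))$ for $1\le k\le m(r,s)-1$, the inversion-set characterization of parabolic factors gives $(\alt_k(r,s))_{J_l}=r$, $(\alt_k(r,s))_{J_{l'}}=1$, and $(\alt_k(r,s))_{J_m}=1$ for every other component, independently of $k$ in this range. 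Consequently $\eta_E(\alt_k(r,s))=\eta_E(\alt_{k-1}(r,s))$, so $\Theta_E$ contracts the join-irreducible $\alt_k(r,s)$; by the congruence-uniformity bijection recalled earlier, $\Cg(\alt_k(r,s))\le\Theta_E$ for every generator and therefore $\Theta^*\le\Theta_E$.

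The reverse inclusion $\Theta_E\le\Theta^*$ is the main content. Because $\Theta^*\le\Theta_E$, the map $\eta_E$ factors through the natural surjection $\nu:W\to W/\Theta^*$ as $\eta_E=\bar\eta_E\circ\nu$ with $\bar\eta_E:W/\Theta^*\to W_{J_1}\times\cdots\times W_{J_k}$ surjective, so it suffices to produce a two-sided inverse. I would define $\phi(w_1,\ldots,w_k):=[w_1\join\cdots\join w_k]_{\Theta^*}$. The identity $\bar\eta_E\circ\phi=\mathrm{id}$ follows from $\eta_{J_l}(w_{l'})=1'$ for $l'\ne l$ (the support of $w_{l'}$ is contained in $J_{l'}$, which is disjoint from $J_l$). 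The remaining identity $\phi\circ\bar\eta_E=\mathrm{id}$ is equivalent to the assertion that for every $w\in W$ one has $w\equiv w^*\pmod{\Theta^*}$, where $w^*:=\Join_l w_{J_l}\le w$.

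This last assertion is the real obstacle; my approach is induction on $\ell(w)-\ell(w^*)$. For the inductive step, pick $s\in S$ with $w^*\covered w^*s\le w$ and examine the new inversion $t=w^*s(w^*)^{-1}$. If $t$ lay in some $T(W_{J_l})$, then $(w^*s)_{J_l}>w_{J_l}=w^*_{J_l}$, contradicting $w^*s\le w$; hence the support $J(t)$ of $t$ is not contained in any single $J_l$ and therefore crosses some edge $\set{r',s'}\in E$. The final step, which is exactly where the polygon- and shard-forcing machinery of Section~\ref{shard sec} becomes essential, is to deduce that the cover $w^*\covered w^*s$ is contracted by $\Theta^*$: the join-irreducible of $W$ associated to this cover must lie in the order ideal in $\Irr(\Con(W))$ generated by the $\alt_k(r',s')$ with $\set{r',s'}\subseteq J(t)$, via forcing propagated out of the rank-two intervals of type $W_{\set{r',s'}}$ that sit inside the rank-$|J(t)|$ interval in which the cover lives.
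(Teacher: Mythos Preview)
Your argument for $\Theta^*\le\Theta_E$ is correct, and your reduction of the reverse inclusion to the statement ``every cover $v\lessdot u$ in $[w^*,w]$ has associated reflection $t\notin\bigcup_l W_{J_l}$, and any such cover is contracted by~$\Theta^*$'' is essentially sound (your induction needs a small repair: the inductive statement should be ``$u\equiv w^*$ for every $u$ in $[w^*,w]$'', inducting on $\ell(u)-\ell(w^*)$, so that at each step you compare $u_{J_l}$ and $v_{J_l}$ to $w_{J_l}$ rather than always starting from~$w^*$).

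The genuine gap is your last paragraph.  You have correctly shown that the reflection $t$ lies in no $W_{J_l}$, hence its (connected) support $J(t)$ contains some edge $\set{r',s'}\in E$.  But the sentence ``the join-irreducible \ldots\ must lie in the order ideal \ldots\ via forcing propagated out of the rank-two intervals'' is a description of the desired conclusion, not a proof of it.  What remains is exactly the substance of the theorem: you must show that every shard in a hyperplane $H_t$ with $t\notin\bigcup_l W_{J_l}$ lies below some $\Sigma_k(r,s)$ in the shard digraph.  The paper proves this by a geometric argument you have not supplied.  Given such a shard $\Sigma$ and a facet of it defined by basic hyperplanes $H_{t'},H_{t''}$ of a rank-two subarrangement, one observes that $\beta_t$ lies in the positive span of $\beta_{t'},\beta_{t''}$; if $t'\in W_{J_i}$ and $t''\in W_{J_j}$, then the simple-root coordinates of $\beta_{t'}$ and $\beta_{t''}$ are the restrictions of those of $\beta_t$ to the disjoint sets $\set{\alpha_s:s\in J_i}$ and $\set{\alpha_s:s\in J_j}$, which determines $t',t''$ up to scaling.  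Hence at most \emph{one} facet of $\Sigma$ can fail to produce a removed shard arrowing~$\Sigma$.  If $\Sigma$ has at least two facets, use a good one; if $\Sigma$ has exactly one facet, Lemma~\ref{half} forces $\Sigma=\Sigma_k(r,s)$ for some edge in~$E$; and Lemma~\ref{whole} rules out zero facets.  Your phrase ``forcing propagated out of rank-two intervals'' does not capture this counting-of-facets idea, and without it there is no mechanism to get from ``$J(t)$ crosses an edge of $E$'' to ``$\Sigma$ is below a generator.''
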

We emphasize that for each set $\set{r,s}$ in $E$ and each $k\in\set{2,\ldots,m(r,s)-1}$, both $\alt_k(r,s)$ and $\alt_k(s,r)$ are in the generating set described in Theorem~\ref{edge cong ji}.

Lattice congruences on the weak order on a finite Coxeter group $W$ are closely tied to the geometry of the reflection representation of $W$.
Let $\Phi$ be a root system associated to $W$ with simple roots $\Pi$.
For each simple reflection $s\in S$, let $\alpha_s$ be the associated simple root.
For each reflection $t\in T$, let $\beta_t$ be the positive root associated to $t$ and let $H_t$ be the reflecting hyperplane for $t$.
A point $x$ is \newword{below} $H_t$ if the inner product of $x$ with $\beta_t$ is nonnegative.
A set of points is below $H_t$ if each of the points is.
Points and sets are \newword{above} $H_t$ if the inner products are nonpositive.

The set $\A=\set{H_t:t\in T}$ is the Coxeter arrangement associated to $W$.
The arrangement $\A$ cuts space into \newword{regions}, which are in bijection with the elements of~$W$.
Specifically, the identity element of $W$ corresponds to the region $D$ that is below every hyperplane of $\A$, and an element $w$ corresponds to the region $wD$.

A subset $\A'$ of $\A$ is called a \newword{rank-two subarrangement} if $|\A'|>1$ and if there is some codimension-2 subspace $U$ such that $\A'=\set{H\in\A:H\supset U}$.
The subarrangement $\A'$ cuts space into $2|\A'|$ regions.
Exactly one of these regions, $D'$ is below all of the hyperplanes in $\A'$, and two hyperplanes in $\A'$ are facet-defining hyperplanes of $D'$.  
These two hyperplanes are called the \newword{basic hyperplanes} of~$\A'$.

We define a \newword{cutting relation} on the hyperplanes of $\A$ as follows:
Given distinct hyperplanes $H, H'\in\A$, let $\A'$ be the rank-two subarrangement containing $H$ and~$H'$.
Then $H$ \newword{cuts} $H'$ if~$H$ \textbf{is} a basic hyperplane of $\A'$ and $H'$ is \textbf{not} a basic hyperplane of $\A'$.
For each $H\in\A$, remove from~$H$ all points contained in hyperplanes of~$\A$ that cut~$H$.
The remaining set of points may be disconnected; the closures of the connected components are called the \newword{shards} in~$H$.
The set of shards of $\A$ is the union, over hyperplanes $H\in\A$, of the set of shards in $H$.
For each shard $\Sigma$, we write $H(\Sigma)$ for the hyperplane containing $\Sigma$.

\begin{example}\label{i25 shards}
When $W$ is a dihedral Coxeter group, the only rank-two subarrangement of $\A$ is $\A$ itself.
Figure~\ref{i25}.a shows the reflection representation of a Coxeter group of type $I_2(5)$ with $S=\set{r,s}$.
Figure~\ref{i25}.b shows the associated shards.
\begin{figure}
\begin{tabular}{ccc}
\scalebox{1}{\includegraphics{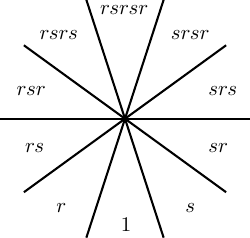}}&&\scalebox{1}{\includegraphics{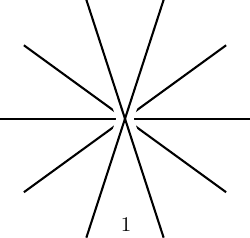}}\\
(a)&&(b)
\end{tabular}
\caption{(a):  A Coxeter group of type $I_2(5)$.
(b):  The associated shards}
\label{i25}
\end{figure}
Each of the shards contains the origin, but to make the picture legible, those shards that don't continue through the origin are drawn with an offset from the origin.
\end{example}

\begin{example}\label{B3 shard ex}
Figure~\ref{B3shards} depicts the shards in the Coxeter arrangement of type $B_3$.
\begin{figure}
\centerline{\scalebox{0.98}{\includegraphics{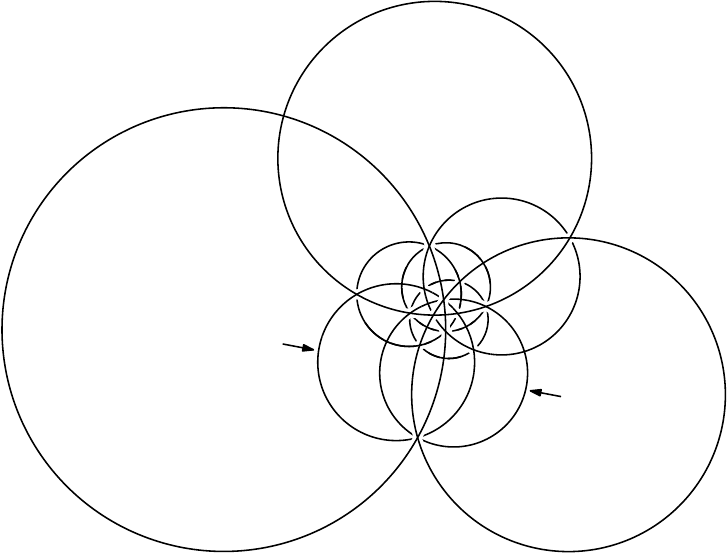}}}
\caption{The shards in a Coxeter group of type $B_3$}
\label{B3shards}
\end{figure}
Here, the arrangement $\A$ is a collection of nine planes in $\reals^3$.
The shards are two-dimensional cones contained in these planes.
To capture the picture in the plane, we consider the intersection of $\A$ with a sphere at the origin.
This intersection is an arrangement on nine great circles of the sphere.
Each shard, intersected with the sphere, is either an entire great circle or an arc of a great circle.
The figure shows these intersections under a stereographic projection from the sphere to the plane.
The region $D$ is the small triangle that is inside all of the circles.
As in Figure~\ref{i25}, where shards intersect, those shards that do not continue through the intersection are shown with an offset from the intersection.
Two of the shards are distinguished by arrows.
The significance of these two shards will be explained in Example~\ref{B3 to A3 shard}.
\end{example}

The shards of $\A$ are in one-to-one correspondence with the join-irreducible elements of $W$.
For each shard $\Sigma$, an \newword{upper element} of $\Sigma$ is an element $w\in W$ such that the region $wD$ is above $H(\Sigma)$ and intersects $\Sigma$ in codimension 1.
The set $U(\Sigma)$ of upper elements of $\Sigma$ contains exactly one element $j(\Sigma)$ that is join-irreducible in~$W$.
The element $j(\Sigma)$ is the unique minimal element of $U(\Sigma)$ in the weak order.
This is a bijection from shards to join-irreducible elements.  
The inverse map sends a join-irreducible element $j$ to $\Sigma(j)$, the shard that contains $jD\cap(j_*D)$.

Shards in $\A$ correspond to certain collections of edges in the Hasse diagram of the weak order on $W$.
Specifically, a shard $\Sigma$ corresponds to the set of all edges $x\covered y$ such that $(xD\cap yD)\subseteq\Sigma$.
For each shard $\Sigma$, a congruence $\Theta$ either contracts none of the edges associated to $\Sigma$ or contracts all of the edges associated to $\Sigma$.
(See \cite[Proposition~6.6]{shardint}.)
If $\Theta$ contracts all of the edges associated to $\Sigma$, then we say that $\Theta$ \newword{removes} $\Sigma$.
In particular, $\Theta$ removes a shard $\Sigma$ if and only if it contracts the join-irreducible element $j(\Sigma)$.
For any congruence $\Theta$, the set of shards not removed by $\Theta$ decomposes space into a fan \cite[Theorem~5.1]{con_app} that is a coarsening of the fan defined by the hyperplanes $\A$.

We now define a directed graph on shards, called the \newword{shard digraph}.
Given two shards~$\Sigma$ and $\Sigma'$, say $\Sigma\to\Sigma'$ if $H(\Sigma)$ cuts $H(\Sigma')$ and $\Sigma\cap\Sigma'$ has codimension~2.
The digraph thus defined on shards is called the \newword{shard digraph}.
This digraph is acyclic\footnote{Shards are often considered in a more general context of simplicial hyperplane arrangements.  In this broader setting, the shard digraph need not be acyclic.  See \cite[Figure~5]{hyperplane}.}
and we call its transitive closure the \newword{shard poset}.
The bijection $\Sigma\mapsto j(\Sigma)$ from shards to join-irreducible elements is an isomorphism from the shard poset to the poset $\Irr(\Con(W))$, thought of as a partial order on join-irreducible elements of $W$.
Thus, given any congruence $\Theta$, the set of shards removed by $\Theta$ is an order ideal in the shard poset.
Conversely, for any set of shards forming an order ideal in the shard poset, there is a congruence removing exactly that set of shards.

We use this correspondence to reuse shard terminology for join-irreducible elements and vice versa.
So, for example, we talk about the degree of a shard (meaning the degree of the corresponding join-irreducible element), etc.

\begin{example}\label{B3 to A3 shard}
Recall that Example~\ref{miraculous} started by choosing a pair of edges in the weak order on a Coxeter group of type $B_3$.
The finest congruence contracting the two edges was calculated, and the quotient modulo this congruence was found to be isomorphic to the weak order on a Coxeter group of type $A_3$.
The characterization of congruences in terms of shards allows us to revisit this example from a geometric point of view.
Contracting the two chosen edges corresponds to removing the two shards indicated with arrows in Figure~\ref{B3shards}.
Removing these two shards forces the removal of all shards below them in the shard poset.
Figure~\ref{B3shardsA3} depicts the shards whose removal is \emph{not} forced.
(Gaps between intersecting shards have been closed in this illustration.)
\begin{figure}
\scalebox{0.98}{\includegraphics{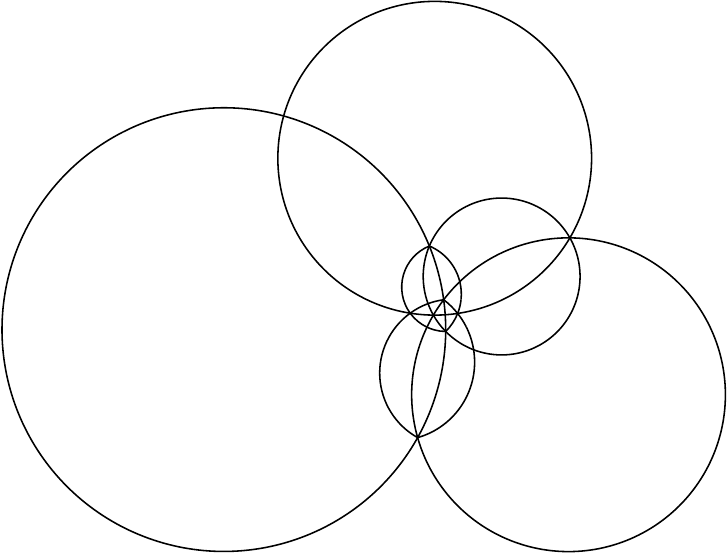}}
\caption{Removing shards from a Coxeter group of type $B_3$}
\label{B3shardsA3}
\end{figure}
The resulting fan is piecewise-linearly (but not linearly) equivalent to the fan defined by a Coxeter arrangement of~type~$A_3$.
\end{example}

Let $\alpha$ denote the involution $w\mapsto ww_0$ on $W.$
This is an anti-automorphism of the weak order.
For each congruence $\Theta$ on $W,$ let $\alpha(\Theta)$ be the \newword{antipodal congruence} to $\Theta$, defined by 
$x\equiv y\mod\alpha(\Theta)$ if and only if $\alpha(x)\equiv \alpha(y)\mod\Theta$.
The involution $\alpha$ induces an anti-isomorphism from $W/\Theta$ to $W/(\alpha(\Theta))$.

The following is a restatement of part of ~\cite[Proposition 6.13]{congruence}.
\begin{proposition}
\label{dual cong}
Let $\Theta$ be a lattice congruence on $W$, let $r,s\in S$, and suppose $k\in{\set{2,3,\ldots,m(r,s)-1}}$.
Then $\alt_k(r,s)$ is contracted by $\Theta$ if and only if $\alt_{k'}(s,r)$ is contracted by $\alpha(\Theta)$, where $k'=m(r,s)-k+1$.
\end{proposition}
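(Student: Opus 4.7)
The plan is to reduce the statement, via the defining property of the antipodal congruence, to an assertion about when two specific cover relations in the weak order on $W$ are contracted together by $\Theta$. For $k\in\{2,\ldots,m-1\}$ with $m=m(r,s)$, the element $\alt_k(r,s)$ is join-irreducible with unique lower cover $\alt_{k-1}(r,s)$, so $\Theta$ contracts $\alt_k(r,s)$ exactly when $\alt_{k-1}(r,s)\equiv\alt_k(r,s)\bmod\Theta$; likewise for $\alt_{k'}(s,r)$ under $\alpha(\Theta)$. By the definition of $\alpha(\Theta)$, the latter condition unfolds to $\alpha(\alt_{k'-1}(s,r))\equiv\alpha(\alt_{k'}(s,r))\bmod\Theta$.

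Next I would compute these antipodes explicitly. Writing the reduced factorization $w_0=w_0(\{r,s\})\cdot u$ with $u=w_0(\{r,s\})^{-1}w_0$, and using the dihedral identity $\alt_j(s,r)\cdot w_0(\{r,s\})=\alt_{m-j}(r,s)$ (a direct computation inside $W_{\{r,s\}}$), one obtains $\alpha(\alt_j(s,r))=\alt_{m-j}(r,s)\cdot u$. Substituting $j=k'$ and $j=k'-1$ with $k'=m-k+1$ rewrites the target cover $\alpha(\alt_{k'}(s,r))\covered\alpha(\alt_{k'-1}(s,r))$ as the translated cover $\alt_{k-1}(r,s)u\covered\alt_k(r,s)u$. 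The claim thus reduces to: the original cover $\alt_{k-1}(r,s)\covered\alt_k(r,s)$ and the shifted cover $\alt_{k-1}(r,s)u\covered\alt_k(r,s)u$ are contracted by exactly the same lattice congruences on $W$.

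The main obstacle is this shard-equivalence between an edge and its ``shift by $u$.'' Both covers correspond to the same reflection $t=\alt_{k-1}(r,s)s'\alt_{k-1}(r,s)^{-1}\in T_{\{r,s\}}$ (for the appropriate $s'\in\{r,s\}$), so their codimension-$1$ facets lie in the common hyperplane $H_t$. The key auxiliary fact is $\inv(u)\cap T_{\{r,s\}}=\emptyset$, which holds because $w_0(\{r,s\})$ normalizes $T_{\{r,s\}}$ while $\inv(w_0)=T$. Applying the inversion-set formula $\inv(xu)=\inv(x)\sqcup x\inv(u)x^{-1}$ for the reduced product $xu=\alt_{k-1}(r,s)u$, the ``new'' inversions lie entirely in $T\setminus T_{\{r,s\}}$, which forces the two facets to sit on the same side of each basic hyperplane of $\A_{\{r,s\}}$ that cuts $H_t$; a parallel analysis of the remaining rank-two subarrangements that contain $H_t$ non-basically handles any further cuts. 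Alternatively, since the statement is a restatement of \cite[Proposition~6.13]{congruence}, one may simply invoke the cited proof to supply the shard-theoretic details.
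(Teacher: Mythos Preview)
The paper does not actually prove this proposition; it is introduced as ``a restatement of part of \cite[Proposition~6.13]{congruence}'' and simply cited.  So there is no in-paper argument to compare against, and your final fallback sentence (``invoke the cited proof'') is exactly what the paper does.

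Your direct argument is essentially correct and more informative than a bare citation.  The reduction via the definition of $\alpha(\Theta)$, the dihedral identity $\alt_j(s,r)\,w_0(\{r,s\})=\alt_{m-j}(r,s)$, and the resulting identification of the target edge with $\alt_{k-1}(r,s)u\covered\alt_k(r,s)u$ are all clean.  Your check that $\inv(u)\cap T_{\{r,s\}}=\emptyset$ is right (it follows from $\inv(w_0)=T$, $\inv(w_0(\{r,s\}))=T_{\{r,s\}}$, and the length-additive decomposition), and since $\alt_k(r,s)\in W_{\{r,s\}}$ normalizes $T_{\{r,s\}}$, the conjugated set $\alt_k(r,s)\,\inv(u)\,\alt_k(r,s)^{-1}$ also misses $T_{\{r,s\}}$.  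Hence the upper regions of both covers lie on the same side of $H_r$ and of $H_s$.

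The one loose end is your clause about ``remaining rank-two subarrangements that contain $H_t$ non-basically'' handling ``further cuts.''  This is unnecessary, and as written it is not clear the same inversion bookkeeping would control those cuts.  The paper's Lemma~\ref{half} disposes of this: since $t\in W_{\{r,s\}}\setminus S$, the shard in $H_t$ has exactly one facet, namely $H_r\cap H_s$, so the only cut of $H_t$ is the one you already handled.  Replace the hand-wave with an appeal to Lemma~\ref{half} and the proof is complete.
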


The following lemma is part of \cite[Lemma~3.11]{shardint}, rephrased in the special case of the weak order.
(Cf. \cite[Lemma~3.9]{congruence}.)
\begin{lemma} \label{whole}
A shard $\Sigma$ is an entire hyperplane if and only if it is $H_s$ for some $s\in S$.
\end{lemma}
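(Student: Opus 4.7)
The plan is to prove the two implications separately, using the characterization that the shard in a hyperplane $H$ equals all of $H$ if and only if no other hyperplane cuts $H$, i.e., $H$ is a basic hyperplane of every rank-two subarrangement of $\A$ containing $H$.

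For $(\Leftarrow)$, I would fix $s \in S$ and show $H_s$ is basic in every rank-two subarrangement $\A'$ containing it. The key observation is that the region $D'$ that sits below every hyperplane of $\A'$ contains the fundamental region $D$, because $D$ is below every hyperplane of $\A \supseteq \A'$. Since $s$ is simple, $H_s$ is a facet of $D$, and at any point in the relative interior of $D \cap H_s$ no other hyperplane of $\A$ passes through. Hence locally $H_s$ separates $D \subseteq D'$ from a neighboring region of $\A'$, so $H_s$ is a facet of $D'$, i.e.\ basic in $\A'$. Since this holds for every such $\A'$, no hyperplane cuts $H_s$, and the shard in $H_s$ is all of $H_s$.

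For $(\Rightarrow)$ I would prove the contrapositive: given $t \in T \setminus S$, I will exhibit a hyperplane cutting $H_t$. First I would pick $s \in S$ with $\langle \beta_t, \alpha_s\ck\rangle > 0$; such an $s$ exists because $\beta_t\ck$ is a nonzero nonnegative combination $\sum_j c_j \alpha_j\ck$ of simple coroots, and $\sum_j c_j \langle \beta_t, \alpha_j\ck\rangle = \langle \beta_t, \beta_t\ck\rangle = 2 > 0$ forces some term to be positive. Because $t \notin S$, $\beta_t \neq \alpha_s$, so $\gamma := s(\beta_t) = \beta_t - \langle \beta_t, \alpha_s\ck\rangle\,\alpha_s$ is again a positive root of $W$, lying in $\mathrm{span}(\alpha_s,\beta_t)$ but on the far side of $\beta_t$ from $\alpha_s$ (negative $\alpha_s$-coefficient). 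Let $\A'$ consist of all hyperplanes of $\A$ through $H_s \cap H_t$ (codimension $2$ since $s \neq t$). The positive roots of $W$ in $\mathrm{span}(\alpha_s,\beta_t)$ are the positive roots of the dihedral reflection subgroup $\langle s,t\rangle$; their common positive cone is $2$-dimensional, and the two basic hyperplanes of $\A'$ correspond to its two extreme rays. The identity $\beta_t = \langle\beta_t, \alpha_s\ck\rangle\,\alpha_s + \gamma$ expresses $\beta_t$ as a strictly positive combination of the other two positive roots $\alpha_s$ and $\gamma$, so $\beta_t$ is not an extreme ray of this cone. Hence $H_t$ is not basic in $\A'$, while $H_s$ is (by the first part), so $H_s$ cuts $H_t$, and the shard in $H_t$ is a proper subset of $H_t$.

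The main obstacle is the selection of $s \in S$ in the harder direction; one needs not merely that $s \neq t$ or that $s$ and $t$ do not commute, but that $s(\beta_t)$ lands strictly beyond $\beta_t$ in the $2$-plane (on the side opposite $\alpha_s$). The coroot pairing identity $\langle \beta_t, \beta_t\ck\rangle = 2$ is the cleanest uniform device producing such an $s$ without any case analysis on the type of $W$.
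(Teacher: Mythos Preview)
Your proof is correct. The paper does not actually prove this lemma; it only quotes it from \cite[Lemma~3.11]{shardint} (with a cross-reference to \cite[Lemma~3.9]{congruence}), so there is no ``paper's own proof'' to compare against.

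For the record, both directions of your argument are sound. In $(\Leftarrow)$, the inclusion $D\subseteq D'$ together with the fact that $H_s$ supports a codimension-$1$ face of $D$ forces $H_s$ to be a facet hyperplane of $D'$, hence basic; this is exactly the standard reasoning. In $(\Rightarrow)$, the key step---producing $s\in S$ with $\langle\beta_t,\alpha_s\ck\rangle>0$---is handled cleanly by your positivity identity. One small remark: you could avoid coroots entirely by writing $\beta_t=\sum_j d_j\alpha_j$ with $d_j\ge 0$ and using $(\beta_t,\beta_t)=\sum_j d_j(\beta_t,\alpha_j)>0$; this sidesteps any question about whether coroots behave well in the non-crystallographic case (they do, but the simpler version makes that issue disappear). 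Once you have $s$, the expression $\beta_t=\langle\beta_t,\alpha_s\ck\rangle\,\alpha_s+\gamma$ with $\gamma=s\beta_t$ positive and distinct from both $\alpha_s$ and $\beta_t$ shows $\beta_t$ is interior to the rank-two positive cone, so $H_t$ is non-basic and is cut by $H_s$.
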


The following lemma is a rephrasing of \cite[Lemma~3.12]{shardint} in the special case of the weak order.
(Cf.\ \cite[Lemma~4.6]{sort_camb}.) 
It implies in particular that a shard of degree 2 is only arrowed by shards of degree~1.
\begin{lemma}\label{half}
Let $\Sigma$ be a shard contained in a reflecting hyperplane $H_t$.
The $\Sigma$ has exactly one facet if and only if $t\not\in S$ but $t\in W_{\set{r,s}}$ for some distinct $r,s\in S$.
In that case, the unique facet of $\Sigma$ is $H_r\cap H_s$.
\end{lemma}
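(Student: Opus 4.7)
The lemma is explicitly a rephrasing of \cite[Lemma~3.12]{shardint} specialized to the weak order, so the backbone of the proof is to cite that result; my outline below sketches the geometric content that justifies the translation.

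First, I would reduce the statement to a claim about cuts on $H_t$. A shard $\Sigma\subseteq H_t$ is bounded by one facet for each codim-$2$ subspace $U\subset H_t$ along which some hyperplane cuts $H_t$ and meets $\Sigma$ in codimension~$1$ in~$H_t$. Since each such cut is a codim-$1$ subspace of $H_t$ through the origin, two or more distinct cuts in $H_t$ subdivide it into sectors, each bounded by at least two cuts. Consequently, $\Sigma$ has exactly one facet if and only if $H_t$ is cut along a unique codim-$2$ subspace $U$, in which case $\Sigma$ is one of the two half-hyperplanes of $H_t$ bounded by~$U$.

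For the ($\Leftarrow$) direction, if $t\not\in S$ but $t\in W_{\set{r,s}}$ for some distinct $r,s\in S$, then the parabolic rank-$2$ subarrangement $\A'=\set{H_{t'}:t'\text{ is a reflection in }W_{\set{r,s}}}$ has $H_r$ and $H_s$ as its basic hyperplanes (since they bound the fundamental chamber $D$), while $H_t$ is non-basic (since $t\not\in\set{r,s}$). Hence $H_r$ and $H_s$ both cut $H_t$ at $H_r\cap H_s$, producing the claimed facet. For the ($\Rightarrow$) direction, if $\Sigma$ has exactly one facet, Lemma~\ref{whole} forces $t\not\in S$, and the reduction above shows that $H_t$ is cut at a unique codim-$2$ subspace $U$, whose rank-$2$ subarrangement has basic hyperplanes $H_r,H_s$ for certain reflections $r,s$.

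The main obstacle, common to both directions, is the assertion that these basic reflections $r,s$ actually lie in $S$, and conversely that no rank-$2$ subarrangement other than the parabolic $W_{\set{r,s}}$ cuts $H_t$ when $t$ is a non-simple reflection of $W_{\set{r,s}}$. This uniqueness is exactly what \cite[Lemma~3.12]{shardint} provides: if one of the basic reflections of $\A'_U$ were not in $S$, then by Lemma~\ref{whole} its hyperplane would itself be cut, and propagation of cuts through the shard digraph would force additional codim-$2$ cuts on $H_t$, contradicting uniqueness. Invoking that cited lemma and unwinding the definitions completes the proof.
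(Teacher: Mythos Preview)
Your proposal is correct and matches the paper's approach exactly: the paper does not prove this lemma but simply cites \cite[Lemma~3.12]{shardint} (with a cross-reference to \cite[Lemma~4.6]{sort_camb}), and you do the same. Your additional geometric sketch is a helpful gloss---the reduction ``$\Sigma$ has one facet iff $H_t$ is cut along a unique codim-$2$ subspace'' is correct---though the final ``propagation of cuts'' heuristic is informal and not the actual argument in \cite{shardint}; since you explicitly defer to the citation for that step, this is not a gap.
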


We now use the machinery of shards to prove Theorem~\ref{edge cong}.

\begin{proof}[Proof of Theorem~\ref{edge cong}]
Recall that the first assertion of the theorem has already been established.
As before, let $\Theta_E$ be the lattice congruence whose classes are the fibers of $\eta_E$.
It remains only to prove that $\Theta_E$ is generated by the join-irreducible elements $\alt_k(r,s)$ and $\alt_k(s,r)$ for all $\set{r,s}\in E$ and $k=2,3,\ldots,m(r,s)-1$.

Let $s\in S$ and $w\in W$ have $ws\covered w$ and let $t\in T$ be the reflection $wsw^{-1}$, so that $ws=tw$. 
The inversion set of $w_{J_i}$ is $\inv(w)\cap W_{J_i}$ and the inversion set of an element determines the element, so $(ws)_{J_i}=w_{J_i}$ if and only if $t\not\in W_{J_i}$.
Thus the edge $ws\covered w$ is contracted by $\Theta_E$ if and only if $wsw^{-1}\not\in W_{J_i}$ for all $i\in\set{1,\ldots,k}$.
Equivalently, a shard is removed by $\Theta_E$ if and only if the reflecting hyperplane containing it is $H_t$ for some reflection $t$ with $t\not\in W_{J_i}$ for all $i\in\set{1,\ldots,k}$.

Let $\Sigma_k(r,s)$ be the shard associated to the join-irreducible element $\alt_k(r,s)$ for each $k\in\set{2,\ldots,m(r,s)-1}$.
To prove the theorem, we will show that every shard removed by $\Theta_E$ is forced by the removal of all shards of the form $\Sigma_k(r,s)$ and $\Sigma_k(s,r)$ for each edge $r$---$s$ in $E$.
Specifically, let $\Sigma$ be a shard removed by $\Theta_E$.
We complete the proof by establishing the following claim:
If $\Sigma$ is not $\Sigma_k(r,s)$ or $\Sigma_k(s,r)$ for some edge $r$---$s$ in $E$ and some $k\in\set{2,\ldots,m(r,s)-1}$, then there exists another shard $\Sigma'$, also removed by $\Theta_E$, such that $\Sigma'\to\Sigma$ in the shard digraph.
Since the shard digraph is acyclic, the claim will imply that, for every shard $\Sigma$ removed by $\Theta_E$, there is a directed path from a shard $\Sigma_k(r,s)$ or $\Sigma_k(s,r)$ to $\Sigma$.

We now prove the claim.
Since $\Sigma$ is removed by $\Theta_E$, it is contained in a reflecting hyperplane $H_t$ with $t\not\in W_{J_i}$ for all $i\in\set{1,\ldots,k}$.
Consider a reflecting hyperplane $H_{t'}$ that cuts $H_t$ to define a facet of $\Sigma$.
Then $H_{t'}$ is basic in the rank-two subarrangement $\A'$ containing $H_{t'}$ and $H_t$, while $H_t$ is not basic in $\A'$.
The other basic hyperplane, $H_{t''}$, of $\A'$ also cuts $H_t$ to define the same facet of $\Sigma$.
Thus there exists a shard $\Sigma'$ contained in $H_{t'}$ such that $\Sigma'\to\Sigma$, and there exists a shard $\Sigma''$ contained in $H_{t''}$ such that $\Sigma''\to\Sigma$.

If $t'\not\in W_{J_i}$ for all $i\in\set{1,\ldots,k}$ then $\Sigma'$ is removed by $\Theta$.
Similarly, if $t''\not\in W_{J_i}$ for all $i\in\set{1,\ldots,k}$ then $\Sigma''$ is removed by $\Theta$.
Otherwise, there exists $i\in\set{1,\ldots,k}$ with $t'\in W_{J_i}$ and $j\in\set{1,\ldots,k}$ with $t''\in W_{J_j}$.
A reflection is in a standard parabolic subgroup $W_J$ if and only if its reflecting hyperplane contains the intersection $\bigcap_{s\in J}H_s$.
In particular, $t$ is in $W_{J_i\cup J_j}$.
Furthermore $i\neq j$, because if $i=j$, then $t$ is in $W_{J_i}$, contradicting the fact that $\Sigma$ is removed by $\Theta_E$.

The positive root $\beta_t$ is in the positive linear span of $\beta_{t'}$ and $\beta_{t''}$.
But the simple root coordinates (coordinates with respect to the basis $\Pi$) of $\beta_{t'}$ and $\beta_{t''}$ are supported on the disjoint subsets $\set{\alpha_s:s\in J_i}$ and $\set{\alpha_s:s\in J_j}$.
Thus the simple root coordinates of $\beta_{t'}$ are determined, up to scaling, by the simple root coordinates of $\beta_t$, by restricting the coordinates of $\beta_t$ to simple roots in $\set{\alpha_s:s\in J_i}$.
The simple root coordinates of $\beta_{t''}$ are determined similarly, up to scaling, by the simple root coordinates of $\beta_t$.

We conclude that there is at most one facet of $\Sigma$ defined by hyperplanes $H_{t'}$ and $H_{t''}$ such that there exists $i\in\set{1,\ldots,k}$ with $t'\in W_{J_i}$ and $j\in\set{1,\ldots,k}$ with $t''\in W_{J_j}$.
Thus if $\Sigma$ has more than one facet, then we can use one of these facets to find, as above, a shard $\Sigma'$, removed by $\Theta_E$, with $\Sigma'\to\Sigma$.
If $\Sigma$ has only one facet, then Lemma~\ref{half} says that the facet of $\Sigma$ is defined by $H_r$ and $H_s$ for some $r,s\in S$.
In this case, the join-irreducible element $j(\Sigma)$ associated to $\Sigma$ is in $W_{\set{r,s}}$ but not in $\set{r,s}$.
Thus $j(\Sigma)$ is $\alt_k(r,s)$ or $\alt_k(s,r)$ for some $k\in\set{2,3,\ldots,m(r,s)-1}$.
Equivalently $\Sigma=\Sigma_k(r,s)$ or $\Sigma=\Sigma_k(s,r)$ for some $k\in\set{2,3,\ldots,m(r,s)-1}$.
Since $\Sigma$ is removed by $\Theta_E$, the edge $r$---$s$ is in $E$.
Since $j(\Sigma)\not\in\set{r,s}$, in particular $t\not\in S$, so Lemma~\ref{whole} rules out the possibility that $\Sigma$ has no facets.
We have proved the claim, and thus completed the proof of the theorem.
\end{proof}

We conclude the section by describing the shard digraph for types $A_n$ and $B_n$, quoting results of \cite[Sections~7--8]{congruence}.
The descriptions use the combinatorial realizations explained in Examples~\ref{An para} and~\ref{Bn para}.

For any subset $A$ of $\set{1,\ldots,n+1}$, let $A^c=[n+1]\setminus A$, let $m$ be the minimum element of $A$ and let $M$ be the maximum element of $A^c$.
Join-irreducible elements of $S_{n+1}$ correspond to nonempty subsets of $[n+1]$ such that $M>m$.
Given such a subset, the corresponding join-irreducible permutation $\gamma$ is constructed by listing the elements of $A^c$ in ascending order followed by the elements of $A$ in ascending order.
The poset $\Irr(\Con(S_{n+1}))$ can be described combinatorially, as a partial order on join-irreducible elements, in terms of these subsets.
(See \cite[Theorem~8.1]{congruence}.)
In this paper, we do not need the details, except for the case $n=3$, which is shown in Figure~\ref{IrrConA3 fig}.
\begin{figure}
\centerline{\scalebox{0.9}{\includegraphics{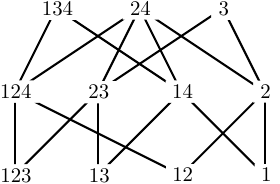}}}
\caption{$\Irr(\Con(S_4))$.}
\label{IrrConA3 fig}
\end{figure}

We do, however, need details of the description of $\Irr(\Con(B_n))$.
A \newword{signed subset} $A$ is a subset of $\set{\pm1,\pm2,\ldots,\pm n}$ such that $A$ contains no pairs $\set{-i,i}$.
Given a nonempty signed subset $A$, let $m$ be the minimum element of $A$.
If $|A|=n$, let $M$ be $-m$ and otherwise, let $M$ be the maximum element of $\set{1,\ldots,n}\setminus \set{|a|:a\in A}$.
The join-irreducible elements of a Coxeter group of type $B_n$ are in bijection with the nonempty signed subsets of $\set{1,\ldots,n}$ with $M>m$.
Given such a signed subset $A$, the corresponding join-irreducible permutation $\gamma$ has one-line notation given by the elements of $\set{1,\ldots,n}\setminus \set{|a|:a\in A}$ in ascending order, followed by the elements of $A$ in ascending order.
The reflection $t$ associated to the unique cover relation down from $\gamma$ is $(M\,\,\,m)(-m\,\,\,-M)$ if $m\neq-M$ or $(m\,\,\,M)$ if $m=-M$.
The associated shard is in the hyperplane $H_t$.

Figure~\ref{IrrConB3 fig} shows the poset $\Irr(\Con(B_3))$.
To describe this poset for general $n$, we quote the combinatorial description of the shard digraph in type $B_n$.
Arrows between shards depend on certain combinations of conditions.
The meaning of the letters q, f and r in the labels for the conditions is explained in \cite[Section~7]{congruence}.
We begin with conditions (q1) through~(q6).\\

\begin{figure}
\centerline{\scalebox{0.9}{\includegraphics{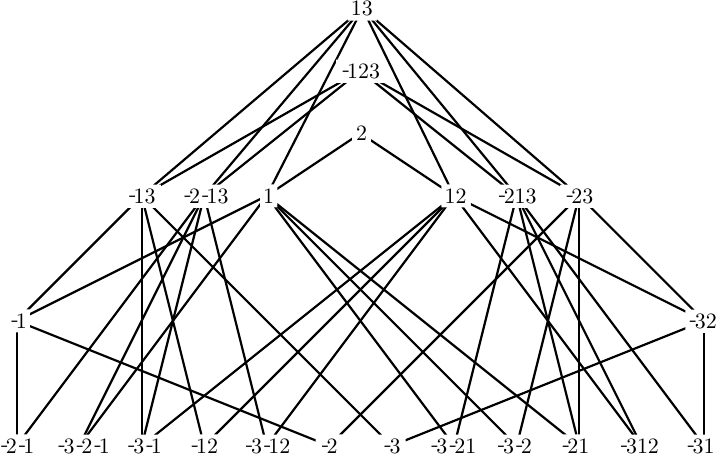}}}
\caption{$\Irr(\Con(B_3))$.}
\label{IrrConB3 fig}
\end{figure}

\begin{tabular}{ll}
(q1)&$-m_1=M_1<M_2=-m_2$.\\
(q2)&$-m_2=M_2=M_1>m_1>0$.\\
(q3)&$M_2=M_1>m_1>m_2\neq -M_2$.\\
(q4)&$M_2>M_1>m_1=m_2\neq -M_2$.\\
(q5)&$-m_2=M_1>m_1>-M_2\neq m_2$.\\
(q6)&$-m_2>M_1>m_1=-M_2\neq m_2$.
\end{tabular}\\

Next we define condition (f), which depends on a parameter in $\set{\pm1,\ldots,\pm n}$.
In the following conditions, the superscript ``$c$'' means complementation in the set $\set{\pm1,\pm2,\ldots,\pm n}$ and the notation $(x,y)$ means the open interval between $x$ and $y$. 
For $a\in\set{\pm1,\ldots,\pm n}$, say $A_2$ satisfies condition (f\,:\,$a$) if one of the following holds:\\

\begin{tabular}{ll}
(f1\,:\,$a$)&$a\in A_2$.\\
(f2\,:\,$a$)&$a\in A_2^c\setminus\set{-M_2,-m_2}$ and $-a\not\in A_2\cap(m_2,M_2)$.\\
(f3\,:\,$a$)&$a\in\set{-M_2,-m_2}$ and $(A_2\cup-A_2)^c\cap(m_2,M_2)\cap(-M_2,-m_2)=\emptyset$.
\end{tabular}\\

\noindent
Every $a\in\set{\pm1,\ldots,\pm n}$ satisfies exactly one of the conditions $a\in A_2$, $a\in A_2^c\setminus\set{-M_2,-m_2}$ and $a\in\set{-M_2,-m_2}$ appearing in condition (f).

Finally, conditions (r1) and (r2):\\

\begin{tabular}{ll}
(r1)&$A_1\cap(m_1,M_1)=A_2\cap(m_1,M_1)$.\\
(r2)&$A_1\cap(m_1,M_1)=-A_2^c\cap(m_1,M_1)$.
\end{tabular}\\

\begin{theorem}
\label{B shard}
$\Sigma_1\to\Sigma_2$ if and only if one of the following combinations of conditions holds:
\begin{enumerate}
\item[1. ] \textup{(q1)} and \textup{(r1)}.
\item[2. ] \textup{(q2)} and \textup{(r1)}.
\item[3. ] \textup{(q3)}, \textup{(f\,:\,$m_1$)} and \textup{(r1)}.
\item[4. ] \textup{(q4)}, \textup{(f\,:\,$M_1$)} and \textup{(r1)}.
\item[5. ] \textup{(q5)}, \textup{(f\,:\,$-m_1$)} and \textup{(r2)}.
\item[6. ] \textup{(q6)}, \textup{(f\,:\,$-M_1$)} and \textup{(r2)}.
\end{enumerate}
\end{theorem}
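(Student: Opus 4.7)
The plan is to translate the combinatorial data associated to each join-irreducible element of $B_n$ into the geometric data of shards, reflecting hyperplanes, and rank-two subarrangements, then carry out a case analysis according to the type of rank-two subarrangement $\A'$ containing $H(\Sigma_1)$ and $H(\Sigma_2)$. The overall strategy mirrors the proof of the corresponding (simpler) result in type $A_n$: identify when $H(\Sigma_1)$ cuts $H(\Sigma_2)$, then identify when $\Sigma_1\cap\Sigma_2$ has codimension two.

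First, from the description just before the theorem I would extract, for each shard $\Sigma_i$, the reflection $t_i$, the hyperplane $H_{t_i}$, and a positive root $\beta_{t_i}$ in the standard $B_n$ root system in $\reals^n$. The two formulas for $t$ (the ``double transposition'' case $m\neq -M$ and the ``negation'' case $m=-M$) give roots of the form $e_M-e_m$ or $e_M+e_{|m|}$ respectively. In this language each of \textup{(q1)}--\textup{(q6)} encodes a distinct geometric relationship between the roots $\beta_{t_1}$ and $\beta_{t_2}$: for instance, \textup{(q1)} places $\beta_{t_1}$ and $\beta_{t_2}$ in a common $B_2$-subsystem with both roots long and of equal length, while \textup{(q3)}--\textup{(q6)} correspond to configurations where one root is short and the other long, or where the two reflections share a common index.

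Second, for each \textup{(qi)} I would identify the rank-two subarrangement $\A'$ containing $H(\Sigma_1)$ and $H(\Sigma_2)$, which in type $B_n$ is of type $A_1\times A_1$, $A_2$, or $B_2$. The basic hyperplanes of $\A'$ are the facet-defining hyperplanes of the fundamental sector $D'$, so I would verify in each case that \textup{(qi)} is precisely the condition that $H(\Sigma_1)$ is basic in $\A'$ while $H(\Sigma_2)$ is not, i.e.\ that $H(\Sigma_1)$ cuts $H(\Sigma_2)$. Cases where both are basic or both non-basic produce no arrow. The case $A_1\times A_1$ (orthogonal reflections) produces no cutting and hence no arrow, which is why it does not appear in the statement.

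Third, cutting $H(\Sigma_2)$ by a single hyperplane only produces a facet of $\Sigma_2$ if $\Sigma_2$ itself extends across the codimension-two subspace $U=H(\Sigma_1)\cap H(\Sigma_2)$. This is controlled by the other hyperplanes that cut $H(\Sigma_2)$ near $U$: I would show that the piecewise definition \textup{(f1\,:\,$a$)}--\textup{(f3\,:\,$a$)} captures exactly when $\Sigma_2$ reaches $U$, depending on whether the relevant index is inside $A_2$, outside the exceptional pair $\set{-M_2,-m_2}$, or equal to one of those exceptional elements (the exceptional case being precisely when the root is short and requires checking that no further rank-two subarrangement cuts $H(\Sigma_2)$ off first). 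Finally, even when $H(\Sigma_1)$ cuts $H(\Sigma_2)$ through a facet of $\Sigma_2$, the arrow $\Sigma_1\to\Sigma_2$ requires that $\Sigma_1$ itself reach $U$, i.e.\ lies on the correct side of all hyperplanes cutting $H(\Sigma_1)$ near $U$. The two sides are distinguished by whether the simple-root coordinates of $\beta_{t_1}$ on the indices in the open interval $(m_1,M_1)$ match those of $\beta_{t_2}$ or those of its ``negated complement,'' which is exactly the dichotomy between \textup{(r1)} and \textup{(r2)}.

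The main obstacle is the bookkeeping: six q-conditions, the three-way split within the f-condition, and the two r-conditions give a substantial case analysis, and the sign conventions arising from the symmetry $A\leftrightarrow -A^c$ on signed subsets are a technical pitfall. As the excerpt notes, this theorem restates results proven type-by-type in \cite[Sections~7--8]{congruence}; my strategy above indicates how the six-case structure arises directly from the classification of rank-two subarrangements in $B_n$ and shows why each individual condition (q), (f), (r) encodes exactly one geometric requirement in the definition of $\Sigma_1\to\Sigma_2$.
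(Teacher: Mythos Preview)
The paper does not prove this theorem: it is explicitly quoted from \cite[Sections~7--8]{congruence} (see the sentence introducing the type-$B_n$ shard description just before the conditions (q1)--(q6)). So there is no ``paper's own proof'' to compare your proposal against, and you correctly acknowledge this at the end of your sketch.

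Your outline is a reasonable reconstruction of how the argument in \cite{congruence} is organized: there, too, the six cases arise from classifying the rank-two subarrangements of the $B_n$ arrangement (types $A_1\times A_1$, $A_2$, $B_2$) and determining which hyperplane is basic; the (f) and (r) conditions then track, respectively, whether $\Sigma_2$ actually reaches the codimension-two intersection $U$ and whether $\Sigma_1$ lies on the correct side of $U$. One caveat: your description of (q1) as ``both roots long'' is not quite right---under (q1) both reflections are negations $(m_i\,\,\,M_i)$ with $m_i=-M_i$, so both roots are \emph{short} in the usual $B_n$ normalization; more importantly, in that $B_2$ subsystem the short roots are the basic ones, and the cutting goes from the shorter-support root to the longer-support one. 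This doesn't break your strategy, but it is exactly the kind of sign/length bookkeeping you flag as a pitfall, and in the actual proof in \cite{congruence} these details are handled by direct coordinate computations rather than by root-length reasoning.
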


\section{Decreasing edge labels}\label{dihedral sec}
Theorem~\ref{edge factor} reduces the problem of classifying compressive homomorphisms $\eta$ between weak orders to the special case of compressive homomorphisms that do not erase any edges.
Equivalently, these are the compressive homomorphisms between finite Coxeter groups that restrict to (unlabeled) graph isomorphisms of diagrams.
As a byproduct, Theorem~\ref{edge factor} allows us to further reduce the problem to the case where the diagrams are connected (or equivalently, where the Coxeter groups are irreducible).
We continue to take $S$ to be the set of simple generators of $W$ and of $W'$ and assume that $\eta:W\to W'$ is such a homomorphism that fixes $S$ pointwise, so that the diagrams of $W$ and $W'$ are identical as unlabeled graphs.
Recall that Proposition~\ref{diagram facts} says that for each edge in the diagram of $W$, the corresponding edge in the diagram of $W'$ has weakly smaller label.

Up to now, the classification of surjective lattice homomorphisms between weak orders has proceeded by uniform arguments, rather than arguments that are specific to particular families in the classification of finite Coxeter groups.
To complete the classification of homomorphisms, we now turn to the classification of finite Coxeter groups, which tells us in particular that there are very few cases remaining to consider.
We need to study the cases where $(W,W')$ are $(I_2(m),I_2(m'))$ for $m'<m$ or $(B_n,A_n)$, $(F_4,A_4)$, $(H_3,A_3)$, $(H_3,B_3)$, $(H_4,A_4)$, or $(H_4,B_4)$. 

The cases where $(W,W')$ are $(I_2(m),I_2(m'))$ for $m'<m$ are easily understood by considering Example~\ref{i25 shards}, where $W$ is $I_2(5)$.
It is apparent from Figure~\ref{i25} that a congruence $\Theta$ has the property that $W/\Theta$ is isomorphic to the weak order on $I_2(4)$ if and only if $\Theta$ contracts exactly one of the join-irreducible elements $rs$, $rsr$, and $rsrs$ and exactly one of the join-irreducible elements $sr$, $srs$, $srsr$.

More generally, let $W$ be a Coxeter group of type $I_2(m)$ with $S=\set{r,s}$.
A congruence $\Theta$ on $W$ has the property that $W/\Theta$ is isomorphic to the weak order on $I_2(m')$ if and only if $\Theta$ contracts exactly $m-m'$ join-irreducible elements of the form $\alt_k(r,s)$ for $k\in\set{2,\ldots,m-1}$ and exactly $m-m'$ join-irreducible elements of the form $\alt_k(s,r)$ for $k\in\set{2,\ldots,m-1}$.
This result for dihedral groups is simple, but its importance for the general case is underscored by Theorem~\ref{diagram uniqueness}.

In Sections~\ref{Bn Sn+1 sec} and~\ref{exceptional sec}, we consider the remaining possibilities for the pair $(W,W')$ and, in particular, complete the proofs of Theorems~\ref{existence} and~\ref{diagram uniqueness}.  
As mentioned earlier, Theorem~\ref{main} follows from Theorems~\ref{para factor} and~\ref{existence}.

\section{Homomorphisms from $B_n$ to $S_{n+1}$}\label{Bn Sn+1 sec}
We realize the groups $S_{n+1}$ and $B_n$ as in Examples~\ref{An para} and~\ref{Bn para}.
A surjective homomorphism $\eta$ from $B_n$ to $S_{n+1}$ restricts, by Proposition~\ref{diagram facts}, to a surjective homomorphism from $(B_n)_{\set{s_0,s_1}}\cong B_2$ to $(S_{n+1})_{\set{s_1,s_2}}\cong S_3$.
Thus, as discussed in Section~\ref{dihedral sec}, the congruence associated to $\eta$ contracts exactly one of the join-irreducible elements $s_0s_1,s_0s_1s_0$, and exactly one of the join-irreducible elements $s_1s_0,s_1s_0s_1$.
We will see that each of the four choices leads to a unique surjective homomorphism and that three of the four choices are associated to homogeneous congruences of degree $2$.

\subsection{Simion's homomorphism}\label{simion sec}
We consider first the case where $s_0s_1$ and $s_1s_0s_1$ are contracted, but we start not by contracting join-irreducibles, but by giving a map from $B_n$ to $S_{n+1}$.
This map was first defined by Simion~\cite{Simion} in connection with a construction of the type-B associahedron (also known as the cyclohedron).

Let $\eta_\sigma:B_n\to S_{n+1}$ map $\pi\in B_n$ to the permutation constructed as follows:
Construct the sequence $(-\pi_n)(-\pi_{n-1})\cdots(-\pi_1)\,0\,\pi_1\cdots\pi_{n-1}\pi_n$, extract the subsequence consisting of nonnegative entries and add $1$ to each entry.
Thus for example, for $\pi=3(-4)65(-7)(-1)2\in B_7$, we construct the sequence 
\[(-2)17(-5)(-6)4(-3)03(-4)65(-7)(-1)2\] 
and extract the subsequence $17403652$, so that $\eta_\sigma(\pi)$ is $28514763\in S_8$.
We will prove the following theorem:

\begin{theorem}\label{simion thm}
The map $\eta_\sigma$ is a surjective lattice homomorphism from $B_n$ to $S_{n+1}$.
Its fibers constitute the congruence generated by $s_0s_1$ and $s_1s_0s_1$.
Furthermore, $\eta_\sigma$ is the unique surjective lattice homomorphism from $B_n$ to $S_{n+1}$ whose restriction to $(B_n)_{\set{s_0,s_1}}$ agrees with $\eta_\sigma$.
\end{theorem}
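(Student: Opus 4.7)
The plan is to establish the three assertions in sequence: that $\eta_\sigma$ is a surjective lattice homomorphism, that its fibers constitute the congruence $\Theta_0:=\Cg(s_0 s_1)\join\Cg(s_1 s_0 s_1)$, and the uniqueness claim.

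To show $\eta_\sigma$ is a lattice homomorphism, I would pass to inversion sets. For $\pi\in B_n$, encode $\pi$ by its doubled one-line word $\tilde\pi=(-\pi_n)\cdots(-\pi_1)\pi_1\cdots\pi_n$, whose $2n$ positions are indexed symmetrically by $-n,\ldots,-1,1,\ldots,n$; inversions of $\pi$ in the weak order on $B_n$ correspond to out-of-order position-pairs in $\tilde\pi$, and inversions of $\eta_\sigma(\pi)$ in $S_{n+1}$ are the out-of-order pairs among the nonnegative entries of the $(2n{+}1)$-letter word obtained after inserting a $0$ at the midpoint. The crux is to rephrase ``$\set{a,b}\subseteq\set{0,1,\ldots,n}$ with $a<b$ is an inversion of $\eta_\sigma(\pi)$'' as an explicit condition on which of the reflections associated to the positive roots $e_{b+1}-e_{a+1}$, $e_{b+1}+e_{a+1}$, and (when $a=0$) $e_{b+1}$ lie in $\inv(\pi)$. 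Since the weak orders on both groups are realized by containment of inversion sets, with meets and joins governed by rank-two closure, this translation makes it routine to check that $\eta_\sigma$ preserves $\meet$ and $\join$. Surjectivity is produced by an explicit inverse: given $\tau\in S_{n+1}$ with $\tau_k=1$, relabel $j\mapsto j-1$ and fold the resulting word of length $n+1$ across position $k$ to recover a signed permutation $\pi\in B_n$ with $\eta_\sigma(\pi)=\tau$.

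For the identification of fibers, direct computation yields $\eta_\sigma(s_0 s_1)=\eta_\sigma(s_0)$ and $\eta_\sigma(s_1 s_0 s_1)=\eta_\sigma(s_1 s_0)$, so if $\Theta$ denotes the congruence whose classes are the fibers of $\eta_\sigma$, then $\Theta_0\le\Theta$ and the quotient map $B_n/\Theta_0 \twoheadrightarrow B_n/\Theta\cong S_{n+1}$ is well-defined and surjective. It remains to show this surjection is a bijection, for which it suffices to prove $|B_n/\Theta_0|=(n+1)!$. I would obtain this count using the shard machinery of Section~\ref{shard sec}: combining the description of $\Irr(\Con(B_n))$ in terms of signed subsets with the forcing rule of Theorem~\ref{B shard}, one identifies the order ideal of $\Irr(\Con(B_n))$ generated by $\set{s_0 s_1, s_1 s_0 s_1}$ and matches the complementary order-filter with $\Irr(\Con(S_{n+1}))$, yielding the desired cardinality equality and hence $\Theta=\Theta_0$.

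For uniqueness, suppose $\eta\colon B_n\to S_{n+1}$ is a surjective lattice homomorphism whose restriction to $(B_n)_{\set{s_0,s_1}}$ agrees with $\eta_\sigma$. By Theorem~\ref{para factor}, $\eta$ factors as a parabolic homomorphism followed by a compressive one; since $\eta(s_0)=s_1\neq 1'$ and $\eta(s_1)=s_2\neq 1'$ and $\eta$ surjects onto the rank-$n$ group $S_{n+1}$, the parabolic factor must be trivial and $\eta$ is itself compressive. The diagram of $B_n$ has $s_0$ as the unique vertex on its label-$4$ edge while the $S_{n+1}$-diagram is a path, so the induced bijection $S\to S'$ must be $s_i\mapsto s_{i+1}$. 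Theorem~\ref{diagram uniqueness} then determines $\eta$ from its restrictions to rank-two standard parabolic subgroups, and every such subgroup other than $(B_n)_{\set{s_0,s_1}}$ has source and target labels lying in $\set{2,3}$, leaving only the (relabeled) identity as a compressive option; the exceptional restriction is specified by hypothesis. Hence $\eta=\eta_\sigma$. The main obstacle throughout is the inversion-set computation in the second paragraph: matching the three families of positive roots of $B_n$ with transpositions of $S_{n+1}$ and verifying compatibility with the rank-two closures controlling meet and join requires a careful case analysis on sign patterns in the doubled word, which is where the bulk of the technical work lives.
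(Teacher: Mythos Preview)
Your plan has two genuine problems.

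First, the uniqueness argument invokes Theorem~\ref{diagram uniqueness}, but in the paper that theorem is established \emph{via} the type-by-type analysis of Sections~\ref{Bn Sn+1 sec}--\ref{exceptional sec}, of which Theorem~\ref{simion thm} is one ingredient. So the argument is circular. The paper's uniqueness argument is self-contained and much shorter: once one knows that the fibers of $\eta_\sigma$ coincide with the congruence $\Theta_\sigma$ generated by $\set{s_0s_1,\,s_1s_0s_1}$, any competing surjective homomorphism $\eta$ agreeing with $\eta_\sigma$ on $(B_n)_{\set{s_0,s_1}}$ has its associated congruence $\Theta$ coarsening $\Theta_\sigma$; since both quotients have $(n+1)!$ elements, $\Theta=\Theta_\sigma$, and one then only needs to rule out the diagram automorphism of $S_{n+1}$.

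Second, your argument that $|B_n/\Theta_0|=(n+1)!$ does not work as stated. Matching the complementary order-filter in $\Irr(\Con(B_n))$ with $\Irr(\Con(S_{n+1}))$ only yields $\Con(B_n/\Theta_0)\cong\Con(S_{n+1})$, and two finite lattices with isomorphic congruence lattices need not have the same number of elements. The paper avoids any such counting, and also avoids your direct inversion-set verification that $\eta_\sigma$ respects meets and joins, by reversing the order of the first two steps. An easy computation (Proposition~\ref{simion cover}) characterizes the covers $\pi\covered\tau$ with $\eta_\sigma(\pi)=\eta_\sigma(\tau)$ as exactly those whose associated reflection is $(i\,\,-j)(j\,\,-i)$ with $1\le i<j\le n$; a shard computation using Theorem~\ref{B shard} (Proposition~\ref{simion finest}) shows that the congruence generated by $s_0s_1$ and $s_1s_0s_1$ contracts precisely the same set of edges. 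Hence $\eta_\sigma$ is constant on $\Theta_\sigma$-classes. One then identifies the bottom elements of $\Theta_\sigma$-classes explicitly and checks, by exhibiting an order-preserving inverse, that the restriction of $\eta_\sigma$ to these bottom elements is a poset isomorphism onto $S_{n+1}$. Since $B_n/\Theta_\sigma$ is isomorphic to the subposet of bottom elements, $\eta_\sigma$ is the composite of the natural quotient map with this isomorphism, hence a surjective lattice homomorphism whose fibers are exactly the $\Theta_\sigma$-classes. In this structure the technical weight lands on the shard computation of Proposition~\ref{simion finest}, not on any direct verification that $\eta_\sigma$ preserves $\meet$ and $\join$.
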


Suppose $\pi\covered \tau$ in the weak order on $B_n$.
Then the one-line notations for $\pi$ and $\tau$ differ in one of two ways:
Either they agree except in the sign of the first entry or they agree except that two adjacent entries of $\pi$ are transposed in $\tau$.
If they agree except in the sign of the first entry, then $\eta_\sigma(\pi)\neq\eta_\sigma(\tau)$.
If they agree except that two adjacent entries of $\pi$ are transposed in $\tau$, then $\eta_\sigma(\pi)=\eta_\sigma(\tau)$ if and only if the two adjacent entries have opposite signs.
We have proved the following:
\begin{proposition}\label{simion cover}
If $\pi\covered \tau$ in the weak order on $B_n$, then $\eta_\sigma(\pi)=\eta_\sigma(\tau)$ if and only if the reflection $t$ associated to the cover $\pi\covered\tau$ is $(i\,\,-j)(j\,\,-i)$ for some $i$ and $j$ with $1\le i<j\le n$. 
\end{proposition}

We give a similar characterization of the congruence generated by $s_0s_1$ and~$s_1s_0s_1$:
\begin{proposition}\label{simion finest}
The congruence generated by $s_0s_1$ and $s_1s_0s_1$ removes a shard $\Sigma$ if and only if $\Sigma$ is contained in a hyperplane $H_t$ such that $t=(i\,\,-j)(j\,\,-i)$ for some $i$ and $j$ with $1\le i<j\le n$.
\end{proposition}
\begin{proof}
Let $C$ be the set of signed subsets corresponding to shards contained in hyperplanes $H_t$ such that $t=(i\,\,-j)(j\,\,-i)$ for some $i$ and $j$ with $1\le i<j\le n$.
Thus $C$ is the set of all nonempty signed subsets $A$ with $m<0$ and $m\neq-M$.
(Equivalently, $A\not\subseteq \set{1,\ldots,n}$ and $|A|<n$.)
The assertion of the proposition is that $C$ is the set of signed subsets representing shards removed by the congruence generated by $s_0s_1$ and $s_1s_0s_1$.
The join-irreducible element $s_0s_1$ has one-line notation $2(-1)34\cdots n$, and the join-irreducible element $s_1s_0s_1$ is $1(-2)34\cdots n$.
The corresponding signed subsets are $\set{-1,3,4,\ldots, n}$ and $\set{-2,3,4,\ldots, n}$, both of which are in $C$.

To establish the ``only if'' assertion of the proposition, we show that no set $A_1\in C$ arrows a set $A_2\not\in C$.
Indeed, if $A_1\in C$ and $A_2\not\in C$, then $m_1<0$ and $m_1\neq-M_1$ and either $m_2>0$ or $m_2=-M_2$.
The fact that $m_1<0$ and $m_1\neq-M_1$ rules out conditions (q1) and (q2).
Whether $m_2>0$ or $m_2=-M_2$, conditions (q3)--(q6) are easily ruled out as well.

To establish the ``if'' assertion, we show that every set $A_2\in C$, except the sets $\set{-1,3,4,\ldots, n}$ and $\set{-2,3,4,\ldots, n}$, is arrowed to by another set ${A_1\in C}$.
Since the shard digraph is acyclic, this implies that $\set{-1,3,4,\ldots, n}$ and $\set{-2,3,4,\ldots, n}$ are the unique sources in the restriction of the digraph to $C$.
Let $A_2\in C$, so that $m_2<0$ and $m_2\neq-M_2$, with $A_2$ not equal to $\set{-1,3,4,\ldots, n}$ or $\set{-2,3,4,\ldots, n}$.
We will produce a signed subset $A_1\in C$ with $A_1\to A_2$ by considering several cases.

\noindent
\textbf{Case 1:} $|A_2|<n-1$.
Then let $A_1=A_2\cup\set{M_2}$.
Then $M_2>M_1>0>m_1=m_2\neq-M_2$, so (q4) holds.
Furthermore, $|A_1|<n$, so $m_1\neq M_1$, and thus $A_1\in C$.
We have $M_1\in A_2^c\setminus\set{-M_2,-m_2}$.
By definition of $M_1$, the element $-M_1$ is not in $A_1$, and since $A_2\subset A_1$, we conclude that $-M_1\not\in A_2$.
Thus (f2\,:\,$M_1$) holds.
Also, (r1) holds, so $A_1\to A_2$ in the shard digraph.

\noindent
\textbf{Case 2:} $|A_2|=n-1$.
Since $A_2\in C$, this is the only alternative to Case 1.
By hypothesis, we have ruled out the possibilities $(m,M)=(-1,2)$ and $(m,M)=(-2,1)$.

\noindent
\textbf{Subcase 2a:} $A_2\cap[1,M_2-1]\neq\emptyset$.
(The notation $[a,b]$ stands for the closed interval between $a$ and $b$.)
Define $M_1$ to be the maximum element of $A_2\cap[1,M_2-1]$ and define $A_1$ to be $(A_2\cup\set{M_2})\setminus\set{M_1}$.
Thus $M_1$ is indeed the maximum element of $\set{1,\ldots,n}\setminus \set{|a|:a\in A_1}$.
Conditions (q4), (f1\,:\,$M_1$), and (r1) hold.
We have $m_1=m_2<0$ and $|A_1|=|A_2|<n$, to $A_1$ is in $C$.

\noindent
\textbf{Subcase 2b:} $A_2\cap[1,M_2-1]=\emptyset$ and $m_2>-M_2$.
By definition of $M_2$, neither $M_2$ nor $-M_2$ is in $A_2$.
Thus since $|A_2|=n-1$, every other element $i$ of $\set{1,\ldots,n}$ has either $i$ or $-i$ in $A_2$.
Since $A_2\cap[1,M_2-1]=\emptyset$, we conclude that the interval $[-M_2+1,-1]$ is contained in $A_2$.
Now the fact that $m_2>-M_2$ implies that $m_2=-M_2+1$.
If $m_2=-1$ then $A_2=\set{-1,3,4,\ldots, n}$, a possibility that is ruled out by hypothesis.
Define $A_1$ to be $A_2\setminus\set{m_2}$.
Then $m_1=m_2+1<0$ and $M_1=M_2$.
Also $|A_1|=n-2$, so $A_1$ is in $C$.
Conditions (q3), (f1\,:\,$m_1$) and (r1) hold.

\noindent
\textbf{Subcase 2c:} $A_2\cap[1,M_2-1]=\emptyset$, $m_2<-M_2$, and $A_2\cap[m_2+1,-1]\neq\emptyset$.
Let $A_1=(A_2\cup\set{-m_2})\setminus\set{m_2}$.
Since $A_2\cap[m_2+1,-1]\neq\emptyset$, the minimum element $m_1$ of $A_1$ is negative, and since in addition $|A_1|=|A_2|<n$, the set $A_1$ is in $C$.
Conditions (q3), (f1\,:\,$m_1$), and (r1) hold, with condition (r1) using the fact that $-m_2>M_1$.

\noindent
\textbf{Subcase 2d:} $A_2\cap[1,M_2-1]=\emptyset$, $m_2<-M_2$, and $A_2\cap[m_2+1,-1]=\emptyset$.
As in Subcase 2b, besides $M_2$, every element $i$ of $\set{1,\ldots,n}$ has either $i$ or $-i$ in $A_2$.
Thus the fact that $A_2\cap[1,M_2-1]=\emptyset$ and $A_2\cap[m_2+1,-1]=\emptyset$ implies that $[1,-m_2-1]\cap[1,M_2-1]=\emptyset$.
In other words, either $m_2=-1$ or $M_2=1$, but the fact that $m_2<-M_2$ rules out the possibility that $m_2=-1$.
Thus $M_2=1$.
Since $A_2\cap[m_2+1,-1]=\emptyset$, the set $A_2$ equals $\set{m_2}\cup(\set{1,\ldots,n}\setminus\set{1,-m_2})$.
If $m_2=-2$, then $A_2=\set{-2,3,4,\ldots, n}$, which was also ruled out by hypothesis.
The set $A_1=(A_2\setminus\set{m_2,-m_2-1})\cup\set{-m_2,m_2+1}$ is in $C$.
We have $m_1=m_2+1<-1$ and $M_1=M_2=1$, so conditions (q3), (f2\,:\,$m_1$), and (r1) hold.
\end{proof}

%
So far, we know nothing about how $\eta_\sigma$ relates to the weak order on $S_{n+1}$.
But Propositions~\ref{simion cover} and~\ref{simion finest} constitute a proof of the second assertion of Theorem~\ref{simion thm}:  that the congruence $\Theta_\sigma$ defined by the fibers of $\eta_\sigma$ is generated by $s_0s_1$ and $s_1s_0s_1$.

Proposition~\ref{simion cover} implies that the bottom elements of $\Theta_\sigma$ are exactly those signed permutations whose one-line notation consists of a (possibly empty) sequence of negative entries followed by a (possibly empty) sequence of positive entries.
The restriction of $\eta_\sigma$ to bottom elements is a bijection to $S_{n+1}$, with inverse map described as follows:
If $\tau\in S_{n+1}$ and $\tau_i=1$, then the inverse map takes $\tau$ to the signed permutation whose one-line notation is 
\[(-\tau_{i-1}+1)(-\tau_{i-2}+1)\cdots(-\tau_1+1)(\tau_{i+1}-1)(\tau_{i+2}-1)\cdots(\tau_{n+1}-1).\]
This inverse map is easily seen to be order-preserving.
The map $\eta_\sigma$ is also easily seen to be order-preserving, so its restriction is as well.
We have shown that the restriction of $\eta_\sigma$ to bottom elements of $\Theta_\sigma$ is an isomorphism to the weak order on~$S_{n+1}$.

The natural map from $B_n$ to $B_n/\Theta_\sigma$ is a surjective homomorphism, and $B_n/\Theta_\sigma$ is isomorphic to the subposet of $B_n$ induced by bottom elements of $\Theta_\sigma$.
Since $\eta_\sigma$ equals the composition of the natural map, followed by the isomorphism to the poset of bottom elements, followed by the isomorphism to $S_{n+1}$, it is a surjective homomorphism.
Now, if $\eta$ is any surjective homomorphism agreeing with $\eta_\sigma$ on $(B_n)_{\set{s_0,s_1}}$, the associated congruence $\Theta$ contracts $s_0s_1$ and $s_1s_0s_1$.
Thus $\Theta$ is a coarsening of the congruence $\Theta_\sigma$ associated to $\eta_\sigma$.
But then since both $B_n/\Theta$ and $B_n/(\Theta_\sigma)$ are isomorphic to $S_{n+1}$, they must coincide.
We have completed the proof of Theorem~\ref{simion thm}.

\subsection{A non-homogeneous homomorphism}\label{nonhom sec}
In this section, we consider surjective homomorphisms whose congruence contracts $s_0s_1s_0$ and $s_1s_0$.
We begin by defining a homomorphism $\eta_\nu$ that is combinatorially similar to Simion's homomorphism.
We will see that, whereas $\eta_\sigma$ is homogeneous of degree $2$, $\eta_\nu$ is non-homogeneous.
However, $\eta_\nu$ is still of low degree:  it is generated by contracting join-irreducible elements of degrees $2$ and $3$.

Let $\eta_\nu:B_n\to S_{n+1}$ map $\pi\in B_n$ to the permutation by extracting the subsequence of $(-\pi_n)(-\pi_{n-1})\cdots(-\pi_1)\pi_1\cdots\pi_{n-1}\pi_n$ consisting of values greater than or equal to $-1$, changing $-1$ to $0$, then adding $1$ to each entry.
Thus for example, for $\pi=3(-4)65(-7)(-1)2\in B_7$, we extract the subsequence $174365(-1)2$, so that $\eta_\nu(\pi)$ is $28547613$.
We will prove the following theorem:

\begin{theorem}\label{nonhom thm}
The map $\eta_\nu$ is a surjective lattice homomorphism from $B_n$ to $S_{n+1}$.
Its fibers constitute the congruence generated by $s_0s_1s_0$, $s_1s_0$, $s_1s_0s_1s_2$, and $s_2s_1s_0s_1s_2$.
Furthermore, $\eta_\nu$ is the unique surjective lattice homomorphism from $B_n$ to $S_{n+1}$ whose restriction to $(B_n)_{\set{s_0,s_1}}$ agrees with $\eta_\nu$.
\end{theorem}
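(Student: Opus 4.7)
The plan is to follow closely the structure of the proof of Theorem~\ref{simion thm} adapted to $\eta_\nu$, proceeding in four stages: (a) characterize the covers contracted by $\eta_\nu$; (b) identify the shards removed by the congruence generated by the four listed join-irreducibles; (c) conclude that $\eta_\nu$ is a surjective lattice homomorphism with that fiber congruence; and (d) deduce uniqueness.

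For (a), the analog of Proposition~\ref{simion cover}, I would track how the extraction procedure defining $\eta_\nu$ responds to each type of cover. For $\tau = \pi s_0$, only the pair $(-\pi_1, \pi_1)$ at adjacent positions $n, n+1$ of the doubled sequence is affected; a direct check gives $\eta_\nu(\pi) = \eta_\nu(\tau)$ iff $|\pi_1| \ge 2$. For $\tau = \pi s_i$ with $i \ge 1$, the swap affects both $(-\pi_{i+1}, -\pi_i)$ and $(\pi_i, \pi_{i+1})$; a case analysis by signs of $\pi_i,\pi_{i+1}$ yields $\eta_\nu(\pi) = \eta_\nu(\tau)$ iff $\pi_i \le -2$ and $\pi_{i+1} \ge 2$. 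Consequently, $\eta_\nu$ contracts a cover iff its reflection is either $(k, -k)$ with $|k| \ge 2$ or $(i, j)(-i, -j)$ with $ij < 0$ and $|i|,|j| \ge 2$, which, in the signed-subset language of Section~\ref{shard sec}, identifies the removed shards with $D := \{A : m \le -2 \text{ and } M \ge 2\}$.

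For (b), the analog of Proposition~\ref{simion finest}, I would first verify that the four generators correspond to the signed subsets $\{-2,-1,3,\ldots,n\}$, $\{-2,1,3,\ldots,n\}$, $\{-2,4,\ldots,n\}$, and $\{-3,4,\ldots,n\}$, all in $D$, and then show these are exactly the sources of the shard digraph restricted to $D$. The forward-closure half, ``$A_1 \in D$ and $A_1 \to A_2$ imply $A_2 \in D$,'' is a routine pass through conditions (q1)--(q6) of Theorem~\ref{B shard}: in each, the hypotheses $m_1 \le -2$ and $M_1 \ge 2$ directly force the analogous conditions on $A_2$. The connectivity half, ``every $A_2 \in D$ other than the four generators is arrowed by some $A_1 \in D$,'' is the main obstacle: mirroring the proof of Proposition~\ref{simion finest}, I would stratify on $|A_2|$ and on the relative positions of $m_2, M_2, \pm 1, \pm 2$, constructing in each subcase an $A_1$ by a small edit of $A_2$ that reduces a complexity measure such as $M_2 - m_2$ or the number of ``inner'' elements, then matching the edit with the appropriate combination of (q)-, (f)- and (r)-conditions from Theorem~\ref{B shard}.

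Combining (a) and (b) shows that the fiber congruence of $\eta_\nu$ coincides with the congruence generated by the four join-irreducibles. For (c), I would characterize the bottom elements of this congruence via the cover analysis in (a), exhibit an explicit order-preserving inverse from $S_{n+1}$ to that subposet in the spirit of the inverse given for $\eta_\sigma$, and factor $\eta_\nu$ through the natural quotient map to conclude it is a surjective lattice homomorphism. For (d), the subtlety relative to Simion's case is that two generators of $\Theta_\nu$ lie outside $(B_n)_{\{s_0,s_1\}}$, so agreement on $B_2$ does not immediately force contraction of all four generators. Given any surjective $\eta : B_n \to S_{n+1}$ agreeing with $\eta_\nu$ on $(B_n)_{\{s_0,s_1\}}$, Proposition~\ref{basic facts}(4) yields a restriction to a surjective $B_3 \cong (B_n)_{\{s_0,s_1,s_2\}} \to (S_{n+1})_{\{s_1,s_2,s_3\}} \cong S_4$ whose further restriction to $B_2$ still agrees with $\eta_\nu$. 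Since (as discussed in the introduction) there are exactly four compressive homomorphisms $B_3 \to S_4$ fixing simple generators and their $B_2$-restrictions are pairwise distinct, this $B_3 \to S_4$ restriction is uniquely determined, forcing $\eta$ to contract $s_1s_0s_1s_2$ and $s_2s_1s_0s_1s_2$ as well. Hence $\eta$'s fiber congruence coarsens $\Theta_\nu$, and equality of quotient cardinalities $(n+1)!$ forces them to coincide.
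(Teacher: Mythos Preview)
Your outline for parts (a)--(c) matches the paper's argument essentially verbatim: the cover analysis in (a) is Proposition~\ref{nonhom cover}, the shard-digraph argument in (b) is Proposition~\ref{nonhom finest} (with your $D$ equal to the paper's $C=\{A:m<-1,\,M>1\}$), and (c) follows the same bottom-element/inverse-bijection pattern used for $\eta_\sigma$.

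The gap is in (d). The assertion ``there are exactly four compressive homomorphisms $B_3 \to S_4$ fixing simple generators'' is not available at this point in the paper; it is precisely what the four theorems of Section~\ref{Bn Sn+1 sec} (including the one you are proving) together establish, so invoking it here, via the introduction or otherwise, is circular. What the paper does instead is a direct argument at the level of $\Irr(\Con)$: having restricted $\eta$ to $B_3$ and knowing that $s_0s_1s_0$ and $s_1s_0$ are contracted, it computes the subposet of $\Irr(\Con(B_3))$ consisting of join-irreducibles \emph{not} forced to be contracted (Figure~\ref{IrrConB3 squashed fig}), notes that since the quotient must be $S_4$ the complement of the contracted ideal must be isomorphic to $\Irr(\Con(S_4))$ (Figure~\ref{IrrConA3 fig}), and checks by inspection of these two finite posets that the only way to delete an order ideal and obtain $\Irr(\Con(S_4))$ is to contract exactly the two join-irreducibles with signed subsets $\{-2\}$ and $\{-3\}$, i.e.\ $s_1s_0s_1s_2$ and $s_2s_1s_0s_1s_2$. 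This concrete poset comparison is the missing ingredient that replaces your appeal to the introduction.

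One smaller point: once you know the fiber congruences coincide, you only get $\eta=\eta_\nu$ up to a lattice automorphism of $S_{n+1}$. The paper rules out the nontrivial (diagram) automorphism by observing that both maps send $(B_n)_{\{s_0,s_1\}}$ into $(S_{n+1})_{\{s_1,s_2\}}$; you should include this step as well.
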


The outline of the proof of Theorem~\ref{nonhom thm} is the same as the proof of Theorem~\ref{simion thm}.
We first determine when signed permutations $\pi\covered \tau$ in $B_n$ map to the same element of $S_{n+1}$.
If $\pi$ and $\tau$ agree except in the sign of the first entry (with $\pi_1$ necessarily positive), then $\eta_\nu(\pi)=\eta_\nu(\tau)$ if and only if $\pi_1>1$.
If they agree except that two adjacent entries of $\pi$ are transposed in $\tau$, then $\eta_\nu(\pi)=\eta_\nu(\tau)$ if and only if the two transposed entries have opposite signs and neither is $\pm1$.
Thus:
\begin{proposition}\label{nonhom cover}
If $\pi\covered \tau$ in the weak order on $B_n$, then $\eta_\nu(\pi)=\eta_\nu(\tau)$ if and only if the reflection $t$ associated to the cover $\pi\covered\tau$ is either $(i\,\,-i)$ for some $i>1$ or $(i\,\,-j)(j\,\,-i)$ for some $i$ and $j$ with $2\le i<j\le n$.
\end{proposition}

We now show that the congruence generated by $s_0s_1s_0$, $s_1s_0$, $s_1s_0s_1s_2$, and $s_2s_1s_0s_1s_2$ agrees with the fibers of $\eta_\nu$, as described in Proposition~\ref{nonhom cover}.

\begin{proposition}\label{nonhom finest}
In the congruence generated by $s_0s_1s_0$, $s_1s_0$, $s_1s_0s_1s_2$, and $s_2s_1s_0s_1s_2$ a shard is removed if and only if it is contained in a hyperplane $H_t$ with $t=(i\,\,-i)$ and $i>1$ or $t=(i\,\,-j)(j\,\,-i)$ and $2\le i<j\le n$.
\end{proposition}
\begin{proof}
Let $C$ be the set of signed subsets corresponding to shards contained in hyperplanes $H_t$ such that $t$ is either $(i\,\,-i)$ for some $i>1$ or $(i\,\,-j)(j\,\,-i)$ for some $i$ and $j$ with $2\le i<j\le n$.
Thus $C$ is the set of all nonempty signed subsets $A$ with $m<-1$ and $M>1$.
The elements $s_0s_1s_0$, $s_1s_0$, $s_1s_0s_1s_2$, and $s_2s_1s_0s_1s_2$ correspond respectively to the signed sets $\set{-2,-1,3,4,\ldots,n}$, $\set{-2,1,3,4,\ldots,n}$, $\set{-2,4,5,\ldots,n}$, and $\set{-3,4,5,\ldots,n}$. 

We first show that no set $A_1\in C$ arrows a set $A_2\not\in C$.
Let $A_1\in C$ and $A_2\not\in C$.
Then $m_1<-1$ and $M_1>1$, while either $m_2\ge-1$ or $M_2=1$.
The fact that $m_1<-1$ rules out condition (q2).
If $M_2=1$, then the fact that $M_1>1$ rules out conditions (q1), (q3) and (q4), and the fact that $m_1<-1$ rules out (q5) and (q6).
Suppose $m_2\ge -1$.
Then the equality $M_2=-m_2$ in (q1) fails unless $m_2=-1$, but in that case $M_2=1$, so (q1) is ruled out.
The fact that $m_1<-1$ rules out (q3) and (q4).
Since $-m_2\le 1$ and $M_1>1$, conditions (q5) and (q6) fail as well.

Next, let $A_2\in C$, so that $m_2<-1$ and $M_2>1$, with $A_2$ not being one of the sets $\set{-2,-1,3,4,\ldots,n}$, $\set{-2,1,3,4,\ldots,n}$, $\set{-2,4,5,\ldots,n}$, or $\set{-3,4,5,\ldots,n}$.
We will produce a signed subset $A_1\in C$ with $A_1\to A_2$.

\noindent
\textbf{Case 1:} $m_2=-M_2$.
Then $|A_2|=n$.
If $m_2=-2$, then $A_2=\set{-2,-1,3,4,\ldots,n}$ or $A_2=\set{-2,1,3,4,\ldots,n}$, but these are ruled out, so $m_2<-2$.
Let $A_1=(A_2\cup\set{-m_2,m_2+1})\setminus\set{m_2,-m_2-1}$.
Then $m_1=m_2+1$ and $M_1=M_2-1=-m_1$.
Conditions (q1) and (r1) hold, so $A_1\to A_2$.
Also $-m_1=M_1>1$, so $A_1\in C$.

\noindent
\textbf{Case 2:} $m_2\neq-M_2$.
Then $|A_2|<n$.

\noindent
\textbf{Subcase 2a:} $(A_2\cup -A_2)^c\cap[2,M_2-1]\neq\emptyset$.
Let $A_1=A_2\cup\set{M_2}$.
Then $m_1=m_2$ and $M_1$ is the maximal element of $(A_2\cup -A_2)^c\cap[2,M_2-1]$, so $A_2\in C$.
Furthermore, $M_2>M_1>m_1=m_2\neq-M_2$, so (q4) holds.
Also (f2\,:\,$M_1$) and (r1) hold.

\noindent
\textbf{Subcase 2b:} $(A_2\cup -A_2)^c\cap[2,M_2-1]=\emptyset$ and $A_2\cap[2,M_2-1]\neq\emptyset$.
Let $b$ be the largest element of $A_2\cap[2,M_2-1]\neq\emptyset$ and define $A_1=(A_2\cup\set{M_2})\setminus\set{b}$.
Then $m_1=m_2$ and $M_1=b$, so $A_1\in C$.
Also (q4), (f1\,:\,$M_1$) and (r1) hold.

\noindent
\textbf{Subcase 2c:} $(A_2\cup -A_2)^c\cap[2,M_2-1]=\emptyset$ and $A_2\cap[2,M_2-1]=\emptyset$.
In this case, $A_2$ contains $\set{-M_2+1,-M_2+2,\ldots,-2}$.
If $m_2<-2$ and $M_2>2$ then let $A_1=A_2\setminus\set{m_2}$.
This is in $C$, since $M_1=M_2>1$ and $m_1\le-2$.
Also, (q3), (f1\,:\,$m_1$) and (r1) hold.

If $M_2=2$ then $m_2<-2$.
If also $m_2=-3$ then there are two possibilities, because $A_2=\set{-3,4,5,\ldots,n}$ is ruled out.
If $A_2=\set{-3,1,4,5,\ldots,n}$ then let $A_1=\set{-2,1,3,4,\ldots,n}\in C$.
If $A_2=\set{-3,-1,4,5,\ldots,n}$ then let $A_1=\set{-2,-1,3,4,\ldots,n}\in C$.
In either case, (q6), (f3\,:\,$-M_1$) and (r2) hold.
If $m_2<-3$ then let $A_1=(A_2\cup\set{m_2+1,-m_2})\setminus\set{-m_2-1,m_2}$.
Then $m_1=m_2+1<-2$ and $M_1=2$, so $A_1\in C$.
Conditions (q3) and (r1) hold, along with either (f1\,:\,$m_1$) or (f2\,:\,$m_1$).

If $m_2=-2$ then $M_2>2$.
Since $A_2$ contains $\set{-M_2+1,-M_2+2,\ldots,-2}$, we conclude that $M_2=3$.
Since $A_2=\set{-2,4,5,\ldots,n}$ is ruled out, there are two possibilities.
If $A_2=\set{-2,1,4,5,\ldots,n}$ then let $A_1={-2,1,3,4,5,\ldots,n}$.
If $A_2=\set{-2,-1,4,5,\ldots,n}$ then let $A_1={-2,-1,3,4,5,\ldots,n}$.
In either case, (q4), (f3\,:\,$M_1$), and (r1) hold.
\end{proof}

We have showed that the fibers of $\eta_\nu$ are a lattice congruence on $B_n$ satisfying the second assertion of Theorem~\ref{nonhom thm}.
Let $\Theta_\nu$ be this congruence.
The map $\eta_\nu$ is obviously order-preserving, so just as in the proof of Theorem~\ref{simion thm}, we will show that $\eta_\nu$ restricts to a bijection from bottom elements of $\Theta_\nu$-classes to permutations in $S_{n+1}$ and that the inverse of the restriction is order-preserving.

Proposition~\ref{nonhom cover} implies that the bottom elements of $\Theta_\nu$ are exactly those signed permutations whose one-line notation consists of a sequence of positive elements, followed by $\pm1$, then a sequence of negative elements, and finally a sequence of positive elements. 
(Any of these three sequences may be empty.)
Such a signed permutation $\pi_1\cdots\pi_n$ with $\pi_i=1$ and negative entries $\pi_{i+1}\cdots\pi_j$, maps to the permutation 
\begin{multline*}
(-\pi_j+1)(-\pi_{j-1}+1)\cdots(-\pi_{i+1}+1)\,1\,(\pi_1+1)(\pi_2+1)\\\cdots(\pi_{i-1}+1)\,2\,(\pi_{j+1}+1)(\pi_{j+2}+1)\cdots(\pi_n+1).
\end{multline*}
If $\pi_i=-1$ instead, then the bottom element maps to the same permutation, except with the entries $1$ and $2$ swapped.
This restriction of $\eta_\nu$ is a bijection whose inverse takes a permutation $\tau_1\cdots\tau_{n+1}$ with $\tau_i=1$ and $\tau_j=2$, with $i<j$, to the signed permutation 
\[(\tau_{i+1}-1)\cdots(\tau_{j-1}-1)\,1\,(-\tau_{i-1}+1)\cdots(-\tau_1+1)(\tau_{j+1}-1)\cdots(\tau_{n+1}-1).\]
If $\tau_i=2$ and $\tau_j=1$, with $i<j$, then the inverse map takes $\tau$ to the same signed permutation, except with $-1$ in place of $1$.
This inverse is order-preserving, and we have proved the first two assertions of Theorem~\ref{nonhom thm}.

To prove the third assertion, we temporarily introduce the bulky notation $\eta_\nu^{(n)}:B_n\to S_{n+1}$.
(Until now, we had suppressed the explicit dependence of the map $\eta_\nu$ on $n$, and we will continue to do so after this explanation.)
Arguing as in the proof of Theorem~\ref{simion thm}, we easily see that, for each $n\ge 3$, the map $\eta_\nu^{(n)}$ is the unique surjective homomorphism from $B_n$ to $S_{n+1}$ whose restriction to $(B_n)_{\set{s_0,s_1,s_2}}$ is $\eta_\nu^{(3)}$.
Now let $\eta$ be any surjective homomorphism from $B_n$ to $S_{n+1}$ whose restriction to $(B_n)_{\set{s_0,s_1}}$ is $\eta_\nu^{(2)}$.
Let $\Theta$ be the associated congruence on $B_n$.
Then in particular, the congruence defined by the restriction of $\eta$ to $(B_n)_{\set{s_0,s_1,s_2}}\cong B_3$ contracts $s_0s_1s_0$ and $s_1s_0$, corresponding to signed subsets $\set{-2,-1,3}$ and $\set{-2,1,3}$.
Figure~\ref{IrrConB3 squashed fig} shows the poset of signed subsets corresponding to join-irreducible elements in $(B_n)_{\set{s_0,s_1,s_2}}\cong B_3$ not forced to be contracted by the contraction of $s_0s_1s_0$ and $s_1s_0$.
\begin{figure}
\centerline{\scalebox{.9}{\includegraphics{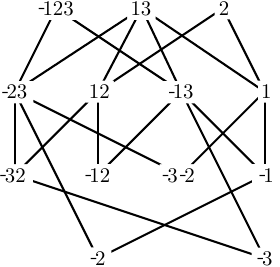}}}
\caption{The complement of an order ideal in $\Irr(\Con(W))$ for $W$ of type $B_3$.}
\label{IrrConB3 squashed fig}
\end{figure}
(This is the complement of an order ideal in the poset of Figure~\ref{IrrConB3 fig}.)

By Proposition~\ref{diagram facts}, the restriction of $\eta$ to $(B_n)_{\set{s_0,s_1,s_2}}$ is a surjective homomorphism to $S_4$.
In particular, the restriction of $\Irr(\Con((B_n)_{\set{s_0,s_1,s_2}}))$ to join-irreducibles not contracted by $\Theta$ is isomorphic to $\Irr(\Con(S_4))$.
Comparing Figure~\ref{IrrConB3 squashed fig} to Figure~\ref{IrrConA3 fig}, we see that $\Theta$ must contract two additional join-irreducible elements, beyond those forced by $s_0s_1s_0$ and $s_1s_0$.
We see, furthermore, that the only two join-irreducible elements that can be contracted, to leave a poset isomorphic to $\Irr(\Con(S_4))$, are those whose signed subsets are $-2$ and $-3$.
We conclude that $\Theta$ contracts $s_1s_0s_1s_2$ and $s_2s_1s_0s_1s_2$.
The second assertion of Theorem~\ref{nonhom thm} says that $\Theta$ is weakly coarser than $\Theta_\nu$.
But since $\eta$ and $\eta_\nu$ are both surjective lattice homomorphisms to $S_{n+1}$, the congruences $\Theta$ and $\Theta_\nu$ have the same number of congruence classes, so $\Theta=\Theta_\nu$.
Thus $\eta$ and $\eta_\nu$ agree up to automorphisms of $S_{n+1}$, but the only nontrivial automorphism of $S_{n+1}$ is the diagram automorphism.
Since both maps take $(B_n)_{\set{s_0,s_1}}$ to $(S_{n+1})_{\set{s_1,s_2}}$, we rule out the diagram automorphism and conclude that $\eta=\eta_\nu$.
Thus completes the proof of Theorem~\ref{nonhom thm}.

\subsection{Two more homogeneous homomorphisms}\label{two more sec}
In this section, we consider the case where $s_0s_1$ and $s_1s_0$ are contracted and the case where $s_0s_1s_0$ and $s_1s_0s_1$ are contracted.
The congruences associated to these cases are dual to each other by Proposition~\ref{dual cong}.

Let $\eta_\delta:B_n\to S_{n+1}$ send $\pi\in B_n$ to the permutation obtained as follows:
If the one-line notation for $\pi$ contains the entry $1$, then construct a sequence 
\[(-\pi_n)(-\pi_{n-1})\cdots(-\pi_1)\,0\,\pi_1\cdots\pi_{n-1}\pi_n,\] 
extract the subsequence consisting of nonnegative entries, and add $1$ to each entry.
If the one-line notation for $\pi$ contains the entry $-1$, then extract the subsequence of $(-\pi_n)(-\pi_{n-1})\cdots(-\pi_1)\pi_1\cdots\pi_{n-1}\pi_n$ consisting of values greater than or equal to $-1$, change $-1$ to $0$, then add $1$ to each entry.
Notice that $\eta_\delta$ is a hybrid of $\eta_\sigma$ and $\eta_\nu$, in the sense that $\eta_\delta(\pi)=\eta_\sigma(\pi)$ if the one-line notation of $\pi$ contains $1$ and $\eta_\delta(\pi)=\eta_\nu(\pi)$ if the one-line notation of $\pi$ contains $-1$.

We will prove the following theorem:

\begin{theorem}\label{delta thm}
The map $\eta_\delta$ is a surjective lattice homomorphism from $B_n$ to $S_{n+1}$.
Its fibers constitute the congruence generated by $s_0s_1s_0$ and $s_1s_0s_1$.
Furthermore, $\eta_\delta$ is the unique surjective lattice homomorphism from $B_n$ to $S_{n+1}$ whose restriction to $(B_n)_{\set{s_0,s_1}}$ agrees with $\eta_\delta$.
\end{theorem}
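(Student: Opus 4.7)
The plan is to follow the same three-step strategy used for Theorems~\ref{simion thm} and~\ref{nonhom thm}, adapted to the piecewise definition of $\eta_\delta$. First, I would characterize which cover relations $\pi\covered\tau$ in $B_n$ are collapsed by $\eta_\delta$, giving an analog of Propositions~\ref{simion cover} and~\ref{nonhom cover}. Second, I would give an intrinsic description of the shards (equivalently, signed subsets) removed by the congruence generated by $s_0s_1s_0$ and $s_1s_0s_1$, matching the cover characterization. Third, I would identify the bottom elements of the associated congruence and exhibit an explicit order-preserving inverse to $\eta_\delta$ landing in $S_{n+1}$. Uniqueness will then follow from a coarsening-plus-cardinality argument, exactly as in the Simion case.

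For step one, the key observation is that $\eta_\delta(\pi)=\eta_\sigma(\pi)$ when $1\in\pi$ and $\eta_\delta(\pi)=\eta_\nu(\pi)$ when $-1\in\pi$. A cover $\pi\covered\tau=\pi s_i$ with $i\geq 1$ has $\pi$ and $\tau$ agreeing on the set of values, so both land in the same piece; combining Propositions~\ref{simion cover} and~\ref{nonhom cover} then yields that the cover is contracted iff the swap is opposite-sign and the negative entry has absolute value at least $2$ (with $|\pi_i|\neq|\pi_{i+1}|$). A cover $\pi\covered\tau=\pi s_0$ with $\pi_1=k>0$ and $\tau_1=-k$ is more delicate: when $k=1$ the two elements use \emph{different} pieces of the definition and a direct calculation shows the cover is not contracted; when $k>1$ both use the same piece ($\eta_\sigma$ if $1\in\pi$, $\eta_\nu$ if $-1\in\pi$), and a short computation shows the cover is contracted precisely when $-1\in\pi$.

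For step two, let $C$ be the set of nonempty signed subsets $A$ such that $m\leq -2$, with the refinements that when $|A|<n$ one requires $m\neq -M$, and when $|A|=n$ one requires $-1\in A$. The two generators $s_0s_1s_0$ and $s_1s_0s_1$ correspond to the signed subsets $\set{-2,-1,3,\ldots,n}$ and $\set{-2,3,\ldots,n}$, both lying in $C$. Using Theorem~\ref{B shard}, I would verify in two parts, along the lines of Propositions~\ref{simion finest} and~\ref{nonhom finest}, that (a) no element of $C$ arrows any element outside $C$ in the shard digraph, so the congruence restricted to $C$ stays inside $C$, and (b) every element of $C$ other than the two generators is arrowed to by another element of $C$, so the generators are the unique sources and the congruence contracts all of $C$. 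Checking (a) reduces to ruling out each of the conditions (q1)--(q6) under the hypotheses $m_1\leq-2$ plus the relevant refinement; checking (b) splits into cases according to whether $|A_2|<n-1$, $|A_2|=n-1$, or $|A_2|=n$, and within the last case whether $A_2\cap[1,M_2-1]$ or related sets are empty. I expect this step to be the main obstacle, since $C$ is less symmetric than the sets arising in the previous two theorems and requires treating the $|A|=n$ and $|A|<n$ regimes in parallel.

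For step three, the characterization from step one implies that the bottom elements of $\Theta_\delta$ are exactly the signed permutations $\pi$ such that (i) if $\pi_1\leq -2$ then $-1\notin\pi$, and (ii) at every descent $\pi_i>\pi_{i+1}$ the two entries either share a sign or satisfy $\pi_{i+1}=-1$. These split into the two families ($1\in\pi$ and $-1\in\pi$) treated in Sections~\ref{simion sec} and~\ref{nonhom sec}, and on each family $\eta_\delta$ reduces to the known bijective restriction of $\eta_\sigma$ or $\eta_\nu$; writing down the explicit piecewise inverse from $S_{n+1}$ and verifying that it is order-preserving is then routine. The same factorization argument used before (natural quotient map followed by an isomorphism of posets) shows $\eta_\delta$ is a surjective lattice homomorphism with fiber congruence generated by the two stated elements. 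Finally, for uniqueness, any surjective homomorphism $\eta\colon B_n\to S_{n+1}$ whose restriction to $(B_n)_{\set{s_0,s_1}}$ agrees with $\eta_\delta$ must contract both $s_0s_1s_0$ and $s_1s_0s_1$, so its associated congruence is a coarsening of $\Theta_\delta$; but both quotients have $(n+1)!$ elements, so the two congruences coincide. Since the only nontrivial automorphism of $S_{n+1}$ is the diagram automorphism and $\eta_\delta$ sends $s_i\mapsto s_i$, we conclude $\eta=\eta_\delta$.
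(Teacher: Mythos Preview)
Your proposal is correct and follows essentially the same three-step strategy as the paper: cover characterization (the paper's Proposition~\ref{delta cover}), shard-ideal characterization (the paper's Proposition~\ref{delta finest}, with exactly your set $C$, described there as $m<-1$ together with ``if $m=-M$ then $-1\in A$''), explicit bottom elements and inverse, and the coarsening-plus-cardinality uniqueness argument.  One point you should make explicit: before invoking the ``natural quotient map followed by an isomorphism'' factorization, you must verify that $\eta_\delta$ itself is order-preserving on all of $B_n$, and in particular on the cross-piece covers $\pi_1=1\covered\tau_1=-1$; the paper checks this separately, showing that in this case $\eta_\delta(\pi)$ and $\eta_\delta(\tau)$ agree except that the adjacent entries $1$ and $2$ are swapped.
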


Theorem~\ref{delta thm} implies in particular that the lattice homomorphism associated to the congruence of Example~\ref{miraculous} is the $n=3$ case of $\eta_\delta$.

Suppose $\pi\covered \tau$ in $B_n$.
First, suppose that $1$ is an entry in the one-line notation of both $\pi$ and $\tau$.
If $\pi$ and $\tau$ agree except in the sign of the first entry, then $\eta_\delta(\pi)\neq\eta_\delta(\tau)$.
If they agree except that two adjacent entries of $\pi$ are transposed in $\tau$, then $\eta_\delta(\pi)=\eta_\delta(\tau)$ if and only if the two adjacent entries have opposite signs.
Next, suppose that $-1$ is an entry in the one-line notation of both $\pi$ and $\tau$.
If $\pi$ and $\tau$ agree except in the sign of the first entry, then since  $-1$ is an entry in the one-line notation of both $\pi$ and $\tau$, we must have $\pi_1>1$, and therefore $\eta_\delta(\pi)=\eta_\delta(\tau)$.
If they agree except that two adjacent entries of $\pi$ are transposed in $\tau$, then $\eta_\nu(\pi)=\eta_\nu(\tau)$ if and only if the two transposed entries have opposite signs and neither is $-1$.
Finally, suppose $\pi$ has the entry $1$ in its one-line notation, but $\tau$ has $-1$.
Then $\pi_1=1$ and $\tau_1=-1$, and $\eta_\delta(\pi)\neq\eta_\delta(\tau)$.
Thus:

\begin{proposition}\label{delta cover}
Suppose $\pi\covered \tau$ in the weak order on $B_n$, and let $t$ be the reflection associated to the cover $\pi\covered\tau$.
Then $\eta_\delta(\pi)=\eta_\delta(\tau)$ if and only if one of the following conditions holds:
\begin{enumerate}
\item[(i)] $t$ is $(i\,\,-j)(j\,\,-i)$ for some $i$ and $j$ with $2\le i<j\le n$.
\item[(ii)] $\pi$ has the entry $1$ in its one-line notation and $t$ is $(1\,\,-j)(j\,\,-1)$ for some $j$ with $1<j\le n$.  
\item[(iii)] $\pi$ has the entry $-1$ in its one-line notation and $t$ is $(i\,\,-i)$ for some $i>1$.
\end{enumerate}
\end{proposition}

As in the previous cases, we now show that the congruence generated by $s_0s_1s_0$ and $s_1s_0s_1$ has a description compatible with Proposition~\ref{delta cover}.

\begin{proposition}\label{delta finest}
The congruence generated by $s_0s_1s_0$ and $s_1s_0s_1$ removes a shard $\Sigma$ if and only if one of the following conditions holds:  
\begin{enumerate}
\item[(i)] $\Sigma$ is contained in a hyperplane $H_t$ such that $t$ is $(i\,\,-j)(j\,\,-i)$ for some $i$ and $j$ with $2\le i<j\le n$.
\item[(ii)] $\Sigma$ is below the hyperplane $H_{(1\,\,-1)}$ and $\Sigma$ is contained in a hyperplane $H_t$ such that $t$ is $(1\,\,-j)(j\,\,-1)$ for some $j$ with $1<j\le n$.  
\item[(iii)] $\Sigma$ is above the hyperplane $H_{(1\,\,-1)}$ and $\Sigma$ is contained in a hyperplane $H_{(i\,\,-i)}$ for some $i>1$.
\end{enumerate}
\end{proposition}

Before proving Proposition~\ref{delta finest}, we verify that conditions (ii) and (iii) make sense.
Specifically, in both conditions, we rule out the possibility that $\Sigma$ is neither above nor below $H_{(1\,\,-1)}$.
Note that if $1<j\le n$ and $t=(1\,\,-j)(j\,\,-1)$, then the rank-two subarrangement containing $H_{(1\,\,-1)}$ and $H_t$ has basic hyperplanes $H_{(1\,\,-1)}$ and $H_{t'}$, where $t'=(1\,\,j)(-j\,\,-1)$.
Thus every hyperplane $H_t$, for $t$ as in condition (ii), is cut at $H_{(1\,\,-1)}$, and thus every shard in $H_t$ is either above or below $H_{(1\,\,-1)}$.
Similarly, every shard in $H_t$, for $t$ as in condition (iii), is either above or below $H_{(1\,\,-1)}$.

\begin{proof}
Suppose $\Sigma$ is a shard in a hyperplane that is cut by $H_{(1\,\,-1)}$, so that $\Sigma$ is either above or below $H_{(1\,\,-1)}$.
Then $\Sigma$ is above $H_{(1\,\,-1)}$ if and only if its associated join-irreducible element $\gamma$ has $-1$ in its one-line notation.
This occurs if and only if $-1$ is contained in the signed subset representing $\gamma$.
The shards specified by conditions (i)--(iii) in Proposition~\ref{delta finest} correspond, via join-irreducible elements, to signed subsets $A$ described respectively by the following conditions: 
\begin{enumerate}
\item[(i)] $A$ has $m<-1$, $M>1$ and $m\neq-M$.
\item[(ii)] $M=1$ and $m<-1$.
\item[(iii)] $-1\in A$ and $-M=m<-1$.
\end{enumerate}
For condition (ii), the condition is $(m,M)\in\set{(-1,j),(-j,1)}$ for some $1<j\le n$, and $-1\not\in A$.
However, the requirement that $-1\not\in A$ implies that $m\neq -1$, so $(m,M)=(-j,1)$, as indicated above.


Let $C$ be the set of signed subsets satisfying (i), (ii) or (iii).
We can describe $C$ more succinctly as the set of signed subsets satisfying both of the following conditions:
\begin{enumerate}
\item[(a)] $m<-1$, and 
\item[(b)] If $m=-M$ then $-1\in A$.
\end{enumerate}

We now show that no set $A_1\in C$ arrows a set $A_2\not\in C$.
Let $A_1\in C$ and $A_2\not\in C$.
Suppose $m_2\ge-1$.
Then (q1) and (q6) fail, because each would include the impossible assertion that $M_1<-m_2$.
Also, (q2)--(q4) fail because $m_1<-1$.
Since $M_1>0$, if (q5) holds, then $m_2=-1$ and $M_1=1$.
If in addition (r2) holds, then the fact that $m_2=-1\in A_2\cap(m_1,M_1)$ implies that $-1\in -A_2^c\cap(m_1,M_1)=A_1\cap(m_1,M_1)$.
But having $-1\in A_1$ contradicts the fact that $M_1=1$, and this contradiction rules out the possibility that (q5) and (r2) both hold.

We have ruled out all six possibilities in Theorem~\ref{B shard} in the case where $m_2\ge-1$.
If $m_2<-1$, then since $A_2\not\in C$, we must have $m_2=-M_2$ and $-1\not\in A_2$.
In particular, (q3)--(q6) fail.
As above, (q2) fails because $m_1<-1$.
If (q1) holds, then $m_1=-M_1$, so since $A_1\in C$, we have $-1\in A_1$.
In particular, $M_1>1$, so (r1) fails because $-1\in A_1$ but $-1\not\in A_2$.

Next, we show that any set in $C$, except  $\set{-2,-1,3,4,\ldots,n}$ and $\set{-2,3,4,\ldots,n}$, is arrowed to by another set in $C$.
Let $A_2\in C$.

\noindent
\textbf{Case 1:} $m_2=-M_2$.
Then $-1\in A_2$ because $A_2\in C$.
We can rule out the possibility that $m_2=-2$, because in this case, $A_2=\set{-2,-1,3,4,\ldots,n}$.
Let $A_1$ be $\set{-2,-1,3,4,\ldots,n}$, which is in $C$.
Then (q1) and (r1) hold, so $A_1\to A_2$.

\noindent
\textbf{Case 2:} $m_2<-M_2$.
Since $A_2\in C$, $m_2<-1$.
If $m_2=-2$, then $M_2=-1$ and $A_2=\set{-2,3,4,\ldots,n}$, which is ruled out by hypothesis.
Thus $m_2<-2$.

\noindent
\textbf{Subcase 2a:} $[m_2+1,-2]\cap A_2\neq\emptyset$.
Let $A_1=(A_2\cup\set{-m_2})\setminus\set{m_2}$.
Then $A_1\in C$ and (q3), (f1\,:\,$m_1$), and (r1) hold, so $A_1\to A_2$.

\noindent
\textbf{Subcase 2b:} $[m_2+1,-2]\cap A_2=\emptyset$ and $m_2<-M_2-1$.
Let $A_1=(A_2\cup\set{m_2+1,-m_2})\setminus\set{m_2,-m_2-1}$.
Then $A_1\in C$ and (q3), (f2\,:\,$m_1$), and (r1) hold.

\noindent
\textbf{Subcase 2c:} $[m_2+1,-2]\cap A_2=\emptyset$ and $m_2=-M_2-1$.
Since $m_2<-2$, $M_2>1$.
If $|A_2|<n-1$, then let $A_1=A_2\cup\set{M_2}$.
Then $A_1\in C$ and (q4), (f2\,:\,$M_1$), and (r1) hold.
If $|A_2|=n-1$ and $-1\in A_2$, then let $A_1=(A_2\cup\set{-M_2,-m_2})\setminus\set{m_2}$.
Then $m_1=-M_2<-1$, and $M_1=M_2$.
Since $-1\in A_1$, $A_1\in C$.
Also, (q3), (f3\,:\,$m_1$), and (r1) hold, so $A_1\to A_2$.
If $|A_2|=n-1$ and $-1\not\in A_2$, then $A_2=(\set{1,2,\ldots,n}\cup\set{m_2})\setminus\set{-m_2-1,-m_2}$, recalling that $M_2=-m_2-1$.
Let $A_1=(A_2\cup\set{M_2})\setminus\set{M_2-1}$.
Then $A_1\in C$ and (q4), (f1\,:\,$M_1$), and (r1) hold.

\noindent
\textbf{Case 3:} $m_2>-M_2$.

\noindent
\textbf{Subcase 3a:} $m_2>-M_2+1$.
Let $A_1=(A_2\cup\set{M_2})\setminus\set{M_2-1}$.
The element $M_2-1$ may or may not be an element of $A_2$, but since $-M_2+1<m_2$, we know that $-M_2+1\not\in A_2$.
Thus $M_1=M_2-1$, and $m_1=m_2$.
In particular, $A_1\in C$ and, furthermore, (q4) and (r1) hold.
Also, either (f1\,:\,$M_1$) or (f2\,:\,$M_1$) holds.

\noindent
\textbf{Subcase 3b:} $m_2=-M_2+1$.
If either $|A_2|<n-1$ or $-1\in A_2$, then let $A_2=A_1\cup\set{M_2}$.
Then $A_2\in C$ and (q4) and (r1) hold.
If $|A_2|<n-1$, then (f2\,:\,$M_1$) holds, and otherwise (f3\,:\,$M_1$) holds.
Finally, if $|A_2|=n-1$ and $-1\not\in A_2$, then let $A_1=(-A_2^c\cap(m_2,-m_2))\cup\set{m_2}\cup\set{M_2,M_2+1,\ldots,n}$.
Then $|A_1|=n$, $m_1=m_2$, and $M_1=-m_2$.
Conditions (q5), (f3\,:\,$-m_1$) and (r2) hold and $A_1\in C$.
\end{proof}

Combining Propositions~\ref{delta cover} and~\ref{delta finest}, the fibers of $\eta_\delta$ are a lattice congruence $\Theta_\delta$ on $B_n$ satisfying the second assertion of Theorem~\ref{delta thm}.
We now show that $\eta_\delta$ is order-preserving, that $\eta_\delta$ restricts to a bijection from bottom elements of $\Theta_\delta$-classes to permutations in $S_{n+1}$ and that the inverse of the restriction is order-preserving.

To see that $\eta_\delta$ is order-preserving, suppose $\pi\covered\tau$ in $B_n$.
If $1$ is an entry in the one-line notation of both $\pi$ and $\tau$, then $\eta_\delta$ coincides with $\eta_\sigma$ on $\pi$ and $\tau$, so $\eta_\delta$ preserves the order relation $\pi\le\tau$.
Similarly, if $-1$ is an entry in the one-line notation of both $\pi$ and $\tau$, then $\eta_\delta$ preserves the order relation $\pi\le\tau$ because $\eta_\nu$ is order-preserving.
Finally, if $1$ is an entry in the one-line notation of $\pi$ and $-1$ is an entry in the one-line notation of $\tau$, then $\eta_\delta(\pi)$ and $\eta_\delta(\tau)$ both have $1$ and $2$ adjacent but in different orders.
Otherwise, the two permutations agree, and we see that $\eta_\delta(\pi)\covered\eta_\delta(\tau)$. 
We have verified that $\eta_\delta$ is order-preserving.

Proposition~\ref{delta cover} leads to a characterization of the bottom elements of $\Theta_\delta$-classes.
A signed permutation $\pi$ whose one-line notation contains $1$ is a bottom element if and only if its one-line notation consists of a (possibly empty) sequence of negative entries followed by a sequence of positive entries (including $1$).
A signed permutation with $-1$ in its one-line notation is a bottom element if and only if it consists of a (possibly empty) sequence of positive entries, followed by a sequence of negative elements beginning with $-1$, and finally a (possibly empty) sequence of positive elements.
Notice that the bottom elements of $\Theta_\delta$ that have $1$ in their one-line notation map to permutations with $1$ preceding $2$, and the bottom elements of $\Theta_\delta$ that have $-1$ in their one-line notation map to permutations with $2$ preceding~$1$.

The inverse of the restriction of $\eta_\delta$ to bottom elements sends a permutation $\tau$ with $\tau_i=1$ and $\tau_j=2$, for $i<j$, to the signed permutation 
\[(-\tau_{i-1}+1)(-\tau_{i-2}+1)\cdots(-\tau_1+1)(\tau_{i+1}-1)(\tau_{i+2}-1)\cdots(\tau_{n+1}-1).\]
The inverse of the restriction sends a permutation $\tau_1\cdots\tau_{n+1}$ with $\tau_i=2$ and $\tau_j=1$, for $i<j$, to the signed permutation 
\[(\tau_{i+1}-1)\cdots(\tau_{j-1}-1)\,(-1)\,(-\tau_{i-1}+1)\cdots(-\tau_1+1)(\tau_{j+1}-1)\cdots(\tau_{n+1}-1).\]
It is now easily verified that the inverse is order-preserving.
The third assertion of Theorem~\ref{delta thm} follows as in the case of Theorem~\ref{simion thm}, and we have completed the proof of Theorem~\ref{delta thm}.

Now let $\eta_\ep:B_n\to S_{n+1}$ send $\pi\in B_n$ to the permutation obtained as follows:
If the one-line notation for $\pi$ contains the entry $1$, then extract the subsequence of $(-\pi_n)(-\pi_{n-1})\cdots(-\pi_1)\pi_1\cdots\pi_{n-1}\pi_n$ consisting of values greater than or equal to $-1$, change $-1$ to $0$, then add $1$ to each entry.
If the one-line notation for $\pi$ contains the entry $-1$, then construct a sequence 
\[(-\pi_n)(-\pi_{n-1})\cdots(-\pi_1)\,0\,\pi_1\cdots\pi_{n-1}\pi_n,\] 
extract the subsequence consisting of nonnegative entries, and add $1$ to each entry.
Thus $\eta_\ep$ is a hybrid of $\eta_\sigma$ and $\eta_\nu$ in exactly the opposite way that $\eta_\delta$ is a hybrid: $\eta_\ep(\pi)=\eta_\nu(\pi)$ if the one-line notation of $\pi$ contains $1$ and $\eta_\ep(\pi)=\eta_\sigma(\pi)$ if the one-line notation of $\pi$ contains $-1$.

\begin{theorem}\label{ep thm}
The map $\eta_\ep$ is a surjective lattice homomorphism from $B_n$ to $S_{n+1}$.
Its fibers constitute the congruence generated by $s_0s_1$ and $s_1s_0$.
Furthermore, $\eta_\ep$ is the unique surjective lattice homomorphism from $B_n$ to $S_{n+1}$ whose restriction to $(B_n)_{\set{s_0,s_1}}$ agrees with $\eta_\ep$.
\end{theorem}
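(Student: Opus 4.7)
The plan is to deduce Theorem~\ref{ep thm} from Theorem~\ref{delta thm} by antipodal duality, avoiding a fresh repetition of the shard-digraph case analysis used for Propositions~\ref{delta cover} and~\ref{delta finest}. Inside $(B_n)_{\set{s_0,s_1}}$, where $m(s_0,s_1)=4$, Proposition~\ref{dual cong} pairs $s_0s_1=\alt_2(s_0,s_1)$ with $s_1s_0s_1=\alt_3(s_1,s_0)$, and pairs $s_1s_0=\alt_2(s_1,s_0)$ with $s_0s_1s_0=\alt_3(s_0,s_1)$. Hence if $\Theta_\delta$ is the congruence of Theorem~\ref{delta thm}, generated by $s_0s_1s_0$ and $s_1s_0s_1$, then the antipodal congruence $\alpha(\Theta_\delta)$ contracts $s_0s_1$ and $s_1s_0$. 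Since $\alpha$ is an automorphism of the lattice $\Con(B_n)$ that sends $\Cg(s_0s_1s_0)$ to $\Cg(s_1s_0)$ and $\Cg(s_1s_0s_1)$ to $\Cg(s_0s_1)$, we obtain $\alpha(\Theta_\delta)=\Cg(s_0s_1)\join\Cg(s_1s_0)$; that is, $\alpha(\Theta_\delta)$ is the congruence generated by $\set{s_0s_1,\,s_1s_0}$.

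Existence and the congruence statement follow quickly: $\alpha$ induces an anti-isomorphism $B_n/\Theta_\delta\to B_n/\alpha(\Theta_\delta)$, and Theorem~\ref{delta thm} together with the fact that $S_{n+1}$ is self-anti-isomorphic via its own $\alpha_{S_{n+1}}$ gives $B_n/\alpha(\Theta_\delta)\cong S_{n+1}$. This produces a surjective lattice homomorphism $B_n\to S_{n+1}$ whose fibers form the congruence generated by $s_0s_1$ and $s_1s_0$; by construction it is the composition $\alpha_{S_{n+1}}\circ\eta_\delta\circ\alpha_{B_n}$. I would then verify by a direct calculation that this composition coincides with the explicit $\eta_\ep$ of the statement: the antipode $\alpha_{B_n}$ negates every entry in one-line notation, so $\pi$ contains $1$ exactly when $\alpha_{B_n}(\pi)$ contains $-1$, and unwinding the $\eta_\sigma$ and $\eta_\nu$ rules through negation and through reversal by $\alpha_{S_{n+1}}$ reproduces the two cases that define $\eta_\ep$. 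In particular, the $\eta_\nu$-branch of $\eta_\delta$ applied to $\alpha_{B_n}(\pi)$ matches, after reversal, the $\eta_\nu$-branch of $\eta_\ep$ when $\pi$ contains $1$, and symmetrically for $-1$.

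Uniqueness is handled exactly as at the end of the proof of Theorem~\ref{simion thm}: any surjective homomorphism $\eta'\colon B_n\to S_{n+1}$ restricting on $(B_n)_{\set{s_0,s_1}}$ to $\eta_\ep$ has an associated congruence coarsening $\alpha(\Theta_\delta)$, and the two quotients both have cardinality $(n+1)!$, so the congruences coincide. The only nontrivial automorphism of $S_{n+1}$ is the diagram automorphism, which would move $(s_1,s_2)$ and is therefore ruled out by the prescribed restriction to $(B_n)_{\set{s_0,s_1}}$. The one substantive but routine obstacle is the formula-matching verification in the middle step; the duality argument itself is essentially formal once Proposition~\ref{dual cong} and Theorem~\ref{delta thm} are in hand.
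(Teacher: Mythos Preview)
Your proposal is correct and is essentially the paper's own proof. The paper defines $\neg:B_n\to B_n$ (negation of all entries) and $\rev:S_{n+1}\to S_{n+1}$ (reversal of one-line notation), observes that both are the antipode $w\mapsto ww_0$ on their respective groups, states as Proposition~\ref{delta ep dual} that $\eta_\ep=\rev\circ\eta_\delta\circ\neg$, and then deduces all three assertions of Theorem~\ref{ep thm} from Theorem~\ref{delta thm} and Proposition~\ref{dual cong} exactly as you outline; your $\alpha_{S_{n+1}}\circ\eta_\delta\circ\alpha_{B_n}$ is literally the paper's $\rev\circ\eta_\delta\circ\neg$, and your combinatorial verification that this composite equals $\eta_\ep$ is the content of Proposition~\ref{delta ep dual}.
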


Fortunately, to prove Theorem~\ref{ep thm}, we can appeal to Theorem~\ref{delta thm} and Proposition~\ref{dual cong}, to avoid any more tedious arguments about shard arrows for type~$B_n$.
Let $\neg:B_n\to B_n$ be the map that sends a signed permutation $\pi_1\cdots\pi_n$ to $(-\pi_1)\cdots(-\pi_n)$.
Let $\rev:S_{n+1}\to S_{n+1}$ be the map sending a permutation $\tau_1\cdots\tau_{n+1}$ to $\tau_{n+1}\cdots\tau_1$.
The following proposition is easily verified.

\begin{proposition}\label{delta ep dual}
The map $\eta_\ep$ sends a signed permutation $\pi$ to $\rev(\eta_\delta(\neg(\pi)))$.
\end{proposition}

The maps $\rev$ and $\neg$ both send a group element $w$ to $ww_0$, where $w_0$ is the longest element of the corresponding Coxeter group (the element $(n+1)n\cdots1$ in $S_{n+1}$ or the element $(-1)\cdots(-n)$ in $B_n$).
The map $w\mapsto ww_0$ is an anti-automorphism of the weak order on any finite Coxeter group.  
Thus the first assertion of Theorem~\ref{ep thm} follows from the first assertion of Theorem~\ref{delta thm}.
Furthermore, by Proposition~\ref{dual cong}, the second assertion of Theorem~\ref{ep thm} follows from the second assertion of Theorem~\ref{delta thm}, and the third assertion follows as usual.

\section{Homomorphisms from exceptional types}\label{exceptional sec}
In this section, we treat the remaining cases, where $(W,W')$ is $(F_4,A_4)$, $(H_3,A_3)$, $(H_3,B_3)$, $(H_4,A_4)$, or $(H_4,B_4)$. 
In each case, the stated theorem proves 
the first assertion of Theorem~\ref{existence}, while inspection of the proof establishes the second assertion of Theorem~\ref{existence} and completes the proof of Theorem~\ref{diagram uniqueness}.

\subsection*{Homomorphisms from type $F_4$}
Let $W$ be a Coxeter group of type $F_4$ and with $S=\set{p,q,r,s}$, $m(p,q)=3$, $m(q,r)=4$ and $m(r,s)=3$.
Figure~\ref{weakF4diagram} shows the order ideal in the weak order on $W$ that encodes these values of $m$.
This is comparable to Figure~\ref{weakB3diagram}.a, except that the square intervals that indicate where $m$ is $2$ (e.g. $m(p,r)=2$) are omitted.
\begin{figure}
\centerline{\includegraphics{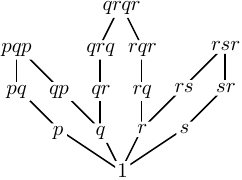}}
\caption{The Coxeter diagram of $F_4$ encoded as an order ideal}
\label{weakF4diagram}
\end{figure}
For $W'$ of type $A_4$ (i.e. $W'=S_5$), we identify $p,q,r,s$ with $s_1,s_2,s_3,s_4$ (in that order) to discuss homomorphisms fixing $S$ pointwise.

\begin{theorem}\label{F4 thm}
There are exactly four surjective lattice homomorphisms from $F_4$ to $A_4$ that fix $S$ pointwise:
For each choice of $\gamma_1\in\set{qr,qrq}$ and $\gamma_2\in\set{rq,rqr}$, there exists a unique such homomorphism whose associated congruence contracts $\gamma_1$ and $\gamma_2$.
\end{theorem}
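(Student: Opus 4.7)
The proof of Theorem~\ref{F4 thm} naturally splits into an upper-bound (at most four) and a construction (at least four) half. For the upper bound, observe that by Proposition~\ref{diagram facts}, any compressive homomorphism $\eta:F_4\to A_4$ fixing $S$ pointwise restricts on each rank-two standard parabolic $W_{\set{x,y}}$ to a compressive homomorphism $W_{\set{x,y}}\to W'_{\set{x,y}}$. For every pair $\set{x,y}\subseteq S$ other than $\set{q,r\}$ we have $m(x,y)=m'(x,y)\in\set{2,3}$, so this restriction is forced to be the identity. For the remaining pair, $m(q,r)=4$ and $m'(q,r)=3$, so by the dihedral analysis of Section~\ref{dihedral sec} there are exactly $\binom{2}{1}^2=4$ choices for the restriction, parametrized by a pair $(\gamma_1,\gamma_2)\in\set{qr,qrq}\times\set{rq,rqr}$ of contracted join-irreducibles. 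Theorem~\ref{diagram uniqueness} (the ``at most one'' statement, which we are simultaneously establishing) then yields at most four such homomorphisms.

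For the construction, fix a choice of $(\gamma_1,\gamma_2)$ and let $\Theta$ be the lattice congruence on $F_4$ generated by the two degree-$2$ join-irreducibles $\gamma_1$ and $\gamma_2$; equivalently, $\Theta$ is the homogeneous congruence of degree~$2$ whose corresponding order ideal in the shard poset $\Irr(\Con(F_4))$ is the principal down-set generated by $\Sigma(\gamma_1)$ and $\Sigma(\gamma_2)$. The plan is to show $F_4/\Theta$ is isomorphic to the weak order on $A_4$, which together with the upper bound furnishes all four maps and verifies that they have the stated restrictions to $W_{\set{q,r}}$.

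To carry out this verification, I would work directly in the shard digraph of $F_4$. Lemma~\ref{whole} excludes the simple shards $\Sigma(s)$ for $s\in S$ from the ideal (they are whole reflecting hyperplanes, with no incoming arrows), and Lemma~\ref{half} together with the degree-$2$ observation at the end of Section~\ref{shard sec} pins down exactly which degree-$2$ shards lie in the down-set of $\Sigma(\gamma_1)\cup\Sigma(\gamma_2)$. The remaining work is to propagate these initial removals upward through the shard digraph of $F_4$ using the arrow criterion (hyperplane $H$ cuts $H'$ iff $H$ is basic and $H'$ is not in the rank-two subarrangement they generate) and to enumerate the surviving shards. One checks that exactly $|A_4|-1=119$ shards remain, with the induced shard poset isomorphic to $\Irr(\Con(A_4))$; since the weak order is congruence uniform and determined up to isomorphism by its poset of join-irreducible congruences together with the rank-two parabolic data, this identifies $F_4/\Theta$ with $A_4$.

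The main obstacle is the last paragraph: the $F_4$ shard digraph is not as cleanly described as the type-$B_n$ one in Theorem~\ref{B shard}, so the ``propagation'' step requires an explicit finite case analysis (well-suited to a direct enumeration on the $24$ positive roots of $F_4$, grouped by the rank-two subarrangements they span). Once this analysis is complete, one simultaneously reads off that the restriction of the quotient map to $W_{\set{q,r}}$ contracts precisely $\gamma_1$ and $\gamma_2$, so the four constructed homomorphisms are pairwise distinct and match the four cases predicted by the upper bound.
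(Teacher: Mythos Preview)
Your upper-bound half is essentially right and matches the paper's implicit reasoning. The construction half, however, has a genuine gap.

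You assert that for \emph{every} pair $(\gamma_1,\gamma_2)$, the congruence $\Theta$ generated by $\{\gamma_1,\gamma_2\}$ alone---a homogeneous degree-$2$ congruence---has $F_4/\Theta\cong A_4$. This is false for the two ``mixed'' pairs $(qrq,rq)$ and $(qr,rqr)$. The paper handles these as Cases~3 and~4: restricting $\eta$ to the standard parabolic $W_{\{q,r,s\}}$ (respectively $W_{\{p,q,r\}}$), which is of type $B_3$, Proposition~\ref{diagram facts} and Theorem~\ref{nonhom thm} force the restriction to be the non-homogeneous map $\eta_\nu$. That theorem then forces $\Theta$ to contract the additional degree-$3$ join-irreducibles $rqrs$, $srqrs$ (Case~3) or $qrqp$, $pqrqp$ (Case~4). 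Only the congruence generated by this enlarged set has quotient $A_4$; the congruence generated by $\{\gamma_1,\gamma_2\}$ alone is strictly finer, and your proposed shard-digraph propagation would leave too many shards standing in these two cases. Missing this is exactly the phenomenon of Section~\ref{nonhom sec}: some compressive homomorphisms are genuinely non-homogeneous.

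Two smaller issues. First, the number of join-irreducible elements (equivalently, shards) in $A_4=S_5$ is $2^5-5-2=26$, not $|A_4|-1=119$; you have conflated join-irreducibles with non-identity elements. Second, a congruence-uniform lattice is \emph{not} determined up to isomorphism by $\Irr(\Con(L))$ together with rank-two data, so even with the right shard count your proposed identification criterion would not suffice. The paper sidesteps this by directly verifying (by computer) that the quotient modulo the correctly enlarged generating set is isomorphic to the weak order on $A_4$, and then obtains uniqueness by the usual cardinality argument: any coarser congruence with the same number of classes must coincide.
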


\begin{proof}
Suppose $\eta:F_4\to A_4$ be a surjective homomorphism fixing $S$ pointwise.
By Proposition~\ref{diagram facts} and the discussion in Section~\ref{dihedral sec}, the congruence $\Theta$ on $F_4$ defined by the fibers of $\eta$ contracts exactly one of $qr$ and $qrq$ and exactly one of $rq$ and $rqr$.
In each of the four cases, we verify that there is a unique choice of $\eta$.

\noindent
\textbf{Case 1:}
\textit{$\Theta$ contracts $qr$ and $rq$.}
Computer calculations show that the quotient of $W$ modulo the congruence generated by $qr$ and $rq$ is isomorphic to the weak order on $A_4$.
Thus a unique $\eta$ exists in this case.

\noindent
\textbf{Case 2:}
\textit{$\Theta$ contracts $qrq$ and $rqr$.}
Computer calculations as in Case 1 (or combining Case 1 with Proposition~\ref{dual cong}) show that a unique $\eta$ exists.

\noindent
\textbf{Case 3:}
\textit{$\Theta$ contracts $qrq$ and $rq$.}
Proposition~\ref{diagram facts} implies that the restriction to $W_{\set{q,r,s}}$ is a surjective lattice homomorphism. 
Thus Theorem~\ref{nonhom thm} implies that the restriction of $\eta$ to $W_{\set{q,r,s}}$ agrees with $\eta_\nu$.
In particular, in addition to the join-irreducible elements of $W_{\set{q,r,s}}$ contracted by the congruence generated by $qrq$ and $rq$, the congruence $\Theta$ contracts $rqrs$ and $srqrs$.
Computer calculations show that the quotient of $W$ modulo the congruence generated by $qrq$, $rq$, $rqrs$, and $srqrs$ is isomorphic to the weak order on $A_4$.
Thus a unique $\eta$ exists.

\noindent
\textbf{Case 4:}
\textit{$\Theta$ contracts $qr$ and $rqr$.}
By an argument/calculation analogous to Case~3 (or by Case 3 and the diagram automorphism of $F_4$), a unique $\eta$ exists.
The congruence $\Theta$ is generated by $qr$, $rqr$, $qrqp$, and $pqrqp$.
\end{proof}

We have dealt with the final crystallographic case, and thus proved the following existence and uniqueness theorem.

\begin{theorem}\label{existence uniqueness crys}
Let $(W,S)$ and $(W',S)$ be finite crystallographic Coxeter systems with $m'(r,s)\le m(r,s)$ for each pair $r,s\in S$.
For each $r,s\in S$, fix a surjective homomorphism $\eta_\set{r,s}$ from $W_{\set{r,s}}$ to $W'_{\set{r,s}}$ with $\eta_\set{r,s}(r)=r$ and $\eta_\set{r,s}(s)=s$.
Then there is exactly one compressive homomorphism $\eta:W\to W'$ such that the restriction of $\eta$ to $W_{\set{r,s}}$ equals $\eta_\set{r,s}$ for each pair $r,s\in S$.
\end{theorem}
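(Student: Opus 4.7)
The plan is to derive Theorem~\ref{existence uniqueness crys} directly from the results already in hand. Uniqueness is immediate from Theorem~\ref{diagram uniqueness}, so the only content to supply is existence: given a compatible family of rank-two data $\eta_{\set{r,s}}$, I must produce a compressive homomorphism $\eta\colon W\to W'$ whose restriction to each $W_{\set{r,s}}$ equals $\eta_{\set{r,s}}$.

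First I would apply Theorem~\ref{edge factor} with $E$ the set of edges $r$---$s$ of the diagram of $W$ for which $m'(r,s)=2$, factoring the sought-for $\eta$ as $\eta'\circ\eta_E$ and reducing the problem to the case in which no edges are erased. In this reduced setting the diagrams of $W$ and $W'$ have the same underlying unlabeled graph, and a compressive homomorphism respects the decomposition into irreducible components by Proposition~\ref{diagram facts}(1), so it suffices to construct $\eta'$ component by component.

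Next I would dispatch the irreducible crystallographic possibilities using the classification of finite crystallographic Coxeter groups. When $W$ is simply laced (type $A_n$, $D_n$, $E_6$, $E_7$, or $E_8$), no crystallographic label decrease survives the edge-erasing step, so $W'=W$ and the identity map suffices. When $W$ is of type $B_n=C_n$, the only nontrivial possibility is $W'=A_n$, and Theorems~\ref{simion thm}, \ref{nonhom thm}, \ref{delta thm}, and \ref{ep thm} provide the four explicit homomorphisms $\eta_\sigma,\eta_\nu,\eta_\delta,\eta_\ep$, one for each of the four choices of $\eta_{\set{s_0,s_1}}$. When $W=F_4$, Theorem~\ref{F4 thm} similarly produces four homomorphisms to $A_4$, one for each rank-two choice on the unique label-$4$ edge. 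When $W=G_2=I_2(6)$ the group is already dihedral, so any admissible $\eta_{\set{r,s}}$ is itself the required homomorphism, as explained in Section~\ref{dihedral sec}.

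The main obstacle is the bookkeeping in the previous paragraph: one must check that the restrictions of the chosen type-specific homomorphisms to rank-two parabolics exhaust the prescribed rank-two data. After edge-erasing, the only surviving label decrease is $4\to 3$, which yields $\binom{m(r,s)-2}{m(r,s)-m'(r,s)}^2=\binom{2}{1}^2=4$ rank-two choices; these are precisely the four homomorphisms appearing in the $B_n$ and $F_4$ theorems, and the matching is explicit in each case. Patching these constructions on each irreducible component yields $\eta'$, and composing with $\eta_E$ gives the desired $\eta$; Theorem~\ref{diagram uniqueness} then confirms that this is the only such compressive homomorphism.
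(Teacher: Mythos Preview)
Your proposal is correct and matches the paper's approach exactly. The paper's proof of Theorem~\ref{existence uniqueness crys} is simply the sentence ``We have dealt with the final crystallographic case, and thus proved the following existence and uniqueness theorem,'' meaning precisely what you spell out: uniqueness from Theorem~\ref{diagram uniqueness}, and existence by reducing via Theorem~\ref{edge factor} to irreducible components with the same underlying graph, then invoking the type-by-type results (Section~\ref{Bn Sn+1 sec} for $B_n\to A_n$, Theorem~\ref{F4 thm} for $F_4\to A_4$, Section~\ref{dihedral sec} for $G_2$, and the identity for simply-laced types). One small imprecision: your sentence ``the only surviving label decrease is $4\to 3$'' is true for irreducible components of rank at least $3$, but a $G_2$ component can carry a $6\to 4$ or $6\to 3$ decrease; you do handle $G_2$ correctly in the preceding paragraph via the dihedral case, so this is only a wording issue.
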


\subsection*{Homomorphisms from type $H_3$}
Now, let $W$ be a Coxeter group of type $H_3$ and with $S=\set{q,r,s}$, $m(q,r)=5$ and $m(r,s)=3$.
Identify $q,r,s$ with the generators $s_1,s_2,s_3$ of $A_3=S_4$.

\begin{theorem}\label{H3 A3 thm}
There are exactly nine surjective lattice homomorphisms from $H_3$ to $A_3$ that fix $S$ pointwise:
For each choice of distinct $\gamma_1,\gamma_2\in\set{qr,qrq,qrqr}$ and distinct $\gamma_3,\gamma_4\in\set{rq,rqr,rqrq}$, there exists a unique such homomorphism whose associated congruence contracts $\gamma_1$, $\gamma_2$, $\gamma_3$ and $\gamma_4$.
\end{theorem}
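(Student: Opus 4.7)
The plan is to follow the strategy of Theorem~\ref{F4 thm}. First, by Proposition~\ref{diagram facts}, any surjective $\eta: H_3 \to A_3$ fixing $S$ pointwise restricts to surjective homomorphisms on each rank-two standard parabolic subgroup. Since $W_{\{r,s\}}$ has type $A_2$ and $W_{\{q,s\}}$ has type $A_1 \times A_1$, matching the corresponding parabolic subgroups of $A_3$, the restrictions there are forced to be the identity and impose no choice. The restriction $W_{\{q,r\}} \cong I_2(5) \to W'_{\{q,r\}} \cong I_2(3)$ is governed by the dihedral discussion in Section~\ref{dihedral sec}: the associated congruence on $I_2(5)$ must contract exactly 2 of $\{qr, qrq, qrqr\}$ and exactly 2 of $\{rq, rqr, rqrq\}$, giving $\binom{3}{2}^2 = 9$ possible restrictions and hence at most 9 such homomorphisms.

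Next, for each of the 9 choices of $(\gamma_1, \gamma_2, \gamma_3, \gamma_4)$, I would compute the congruence $\Theta$ on $H_3$ generated by contracting these four join-irreducibles, using the polygonal forcing rule or equivalently the shard digraph of Section~\ref{shard sec}. In each case I verify that $|H_3/\Theta| = 24$ and that $H_3/\Theta$ is isomorphic to the weak order on $A_3$ as a lattice. This is a finite verification, best handled by computer calculation as in Theorem~\ref{F4 thm}. The symmetric choice $(\gamma_1,\gamma_2, \gamma_3, \gamma_4) = (qr, qrq, rq, rqr)$ should yield a homogeneous congruence of degree 2 (this is essentially Example~\ref{miraculous} with $B_3$ replaced by $H_3$), while the other 8 cases are expected to force contractions of join-irreducibles of degree 3 supported on all of $\{q,r,s\}$, in direct analogy with Cases 3 and 4 of the proof of Theorem~\ref{F4 thm}. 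Uniqueness within each case is then automatic: any two surjections with the same restriction to $W_{\{q,r\}}$ have congruences containing $\Theta$, and since both quotients have 24 elements they must equal $\Theta$ itself; the nontrivial diagram automorphism of $A_3$ is excluded by the requirement that $q$, $r$, and $s$ all be fixed.

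The main obstacle is identifying, in each of the 8 non-symmetric cases, exactly which higher-degree join-irreducibles are forced to be contracted so as to cut $H_3$ down from 120 to 24 elements. The propagation of contractions through the non-dihedral polygons of the $H_3$ weak order, in particular those coming from reflections lying in $W_{\{q,r\}} \cong I_2(5)$, is intricate to track by hand: the pentagonal and decagonal intervals in the $H_3$ weak order support many degree-3 shards whose fate under the forcing relation must be tracked. Lemmas~\ref{whole} and~\ref{half}, together with the general structure of the shard digraph in rank 3, provide the right conceptual framework, but a direct computer enumeration of lattice congruences on $H_3$ with 24 classes is almost certainly the most efficient route to completing the case analysis; the output should confirm that precisely the 9 claimed congruences exist.
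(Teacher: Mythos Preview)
Your approach is essentially the paper's: reduce to the nine dihedral choices via Proposition~\ref{diagram facts} and Section~\ref{dihedral sec}, then handle each case by computer calculation of the congruence generated by the four degree-$2$ join-irreducibles, adding degree-$3$ generators where the quotient is still too large, and deducing uniqueness by a cardinality argument.

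One correction to your expectations: it is not true that only the ``symmetric'' choice $\{qr,qrq,rq,rqr\}$ yields a homogeneous degree-$2$ congruence. The paper's computation shows that \emph{six} of the nine choices already give a quotient isomorphic to $A_3$ from the four degree-$2$ generators alone; only three cases (namely $\{qrq,qrqr,rq,rqr\}$, $\{qr,qrqr,rq,rqr\}$, and $\{qrq,qrqr,rq,rqrq\}$) require additional degree-$3$ generators. In those three cases the quotient modulo the degree-$2$ congruence has more than $24$ elements, and the paper identifies the extra generators by inspecting $\Irr(\Con(H_3/\Theta'))$, exactly as in the argument surrounding Figure~\ref{IrrConB3 squashed fig} for the non-homogeneous $B_n\to S_{n+1}$ map. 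Your plan of ``computing the congruence generated by the four join-irreducibles and verifying $|H_3/\Theta|=24$'' would therefore fail as stated in those three cases; you need the extra step of searching $\Irr(\Con)$ of the intermediate quotient for the unique way to descend to~$A_3$.
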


\begin{proof}
Suppose $\eta:H_3\to A_3$ is a surjective homomorphism, fixing $S$ pointwise, whose fibers define a congruence $\Theta$.
As in the proof of Theorem~\ref{F4 thm}, we consider the nine cases separately.

In each of Cases 1--6 below, computer calculations show that the quotient of $H_3$ modulo the congruence generated by the given join-irreducible elements is isomorphic to the weak order on $A_3$.
Thus in each of these cases, a unique $\eta$ exists.

\noindent
\textbf{Case 1:}
\textit{$\Theta$ contracts $qr$, $qrq$, $rq$, and $rqr$.}

\noindent
\textbf{Case 2:}
\textit{$\Theta$ contracts $qr$, $qrq$, $rq$, and $rqrq$.}

\noindent
\textbf{Case 3:}
\textit{$\Theta$ contracts $qr$, $qrq$, $rqr$, and $rqrq$.}

\noindent
\textbf{Case 4:}
\textit{$\Theta$ contracts $qr$, $qrqr$, $rq$, and $rqrq$.}

\noindent
\textbf{Case 5:}
\textit{$\Theta$ contracts $qr$, $qrqr$, $rqr$, and $rqrq$.}

\noindent
\textbf{Case 6:}
\textit{$\Theta$ contracts $qrq$, $qrqr$, $rqr$, and $rqrq$.}

\noindent
\textbf{Case 7:}
\textit{$\Theta$ contracts $qrq$, $qrqr$, $rq$, and $rqr$.}
In this case, the quotient of $H_3$ modulo the congruence $\Theta'$ generated by the given join-irreducible elements is a poset with 28 elements.
Inspection of $\Irr(\Con(H_3/\Theta'))$ reveals a unique way to contract additional join-irreducible elements so as to obtain $\Irr(\Con(A_3))$.
(Cf. Figure~\ref{IrrConB3 squashed fig} and the surrounding discussion in Section~\ref{nonhom sec}.)
The additional join-irreducible elements to be contracted are $rqrqsrq$ and $srqrqsrq$.
Computer calculations confirm that the quotient of $H_3$ modulo the congruence generated by $qrq$, $qrqr$, $rq$, $rqr$, $rqrqsrq$, and $srqrqsrq$ is indeed isomorphic to $A_3$, so there exists a unique $\eta$ in this case.

\noindent
\textbf{Case 8:}
\textit{$\Theta$ contracts $qr$, $qrqr$, $rq$, and $rqr$.}
Computer calculations show that the quotient of $H_3$ modulo the congruence generated by $qr$, $qrqr$, $rq$, $rqr$, 
$rqrqsrqr$, and $srqrqsrqr$  is isomorphic to $A_3$.
As in Case 7, inspection of the poset of irreducibles of the congruence lattice of the quotient reveals that no other possibilities exist.
Thus $\eta$ exists and is unique in this case.

\noindent
\textbf{Case 9:}
\textit{$\Theta$ contracts $qrq$, $qrqr$, $rq$, and $rqrq$.}
The existence and uniqueness of $\eta$ follows from Case 7 and Proposition~\ref{dual cong}.
Alternately, the calculation can be made directly as in Cases 7--8, realizing $\Theta$ as the congruence generated by $qrq$, $qrqr$, $rq$, $rqrq$, $rqrs$, and $srqrs$.
\end{proof}

\begin{theorem}\label{H3 B3 thm}
There are exactly eight surjective lattice homomorphisms from $H_3$ to $B_3$:
There is no surjective lattice homomorphism $\eta:H_3\to B_3$ whose associated congruence contracts $qr$ and $rqrq$.
For every other choice of $\gamma_1\in\set{qr,qrq,qrqr}$ and $\gamma_2\in\set{rq,rqr,rqrq}$, there exists a unique such homomorphism whose associated congruence contracts $\gamma_1$ and $\gamma_2$.
\end{theorem}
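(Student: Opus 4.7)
The plan is to mimic the case analyses in the proofs of Theorems~\ref{F4 thm} and~\ref{H3 A3 thm}. Since $m(q,r)=5$ and in $B_3$ we have $m'(q,r)=4$, Proposition~\ref{diagram facts} combined with the dihedral classification of Section~\ref{dihedral sec} forces the associated congruence $\Theta$ of any such $\eta$ to contract exactly one element $\gamma_1\in\set{qr,qrq,qrqr}$ and exactly one element $\gamma_2\in\set{rq,rqr,rqrq}$. This leaves nine cases, indexed by the pair $(\gamma_1,\gamma_2)$; eight of these are claimed to yield unique homomorphisms and one is claimed to yield none.

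For each of the eight cases where existence is claimed, I would compute the congruence $\Theta'$ generated by $\gamma_1$ and $\gamma_2$ alone and compare the induced subposet of $\Irr(\Con(H_3))$ on the remaining join-irreducibles with $\Irr(\Con(B_3))$ depicted in Figure~\ref{IrrConB3 fig}. If they already agree, then $H_3/\Theta'\cong B_3$ and existence and uniqueness are immediate. If they do not, then as in Cases~7--9 of the proof of Theorem~\ref{H3 A3 thm} one inspects $\Irr(\Con(H_3/\Theta'))$ to determine the unique (if any) further set of join-irreducibles whose additional contraction reduces the poset of irreducibles exactly to $\Irr(\Con(B_3))$, and then verifies by direct computation that the resulting quotient has $48$ elements and is isomorphic to the weak order on $B_3$. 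Workload is cut by Proposition~\ref{dual cong}: writing $\gamma_1=\alt_k(q,r)$ and $\gamma_2=\alt_j(r,q)$, the antipodal involution pairs $(k,j)\leftrightarrow(6-j,6-k)$, so the pairs $\set{(2,2),(4,4)}$, $\set{(2,3),(3,4)}$, $\set{(3,2),(4,3)}$ are identified by duality while $(3,3)$ and $(4,2)$ are self-dual, reducing the number of independent existence verifications to five.

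For the remaining case $(\gamma_1,\gamma_2)=(qr,rqrq)$, the plan is to show non-existence. Let $\Theta'$ be the congruence on $H_3$ generated by $qr$ and $rqrq$. I would compute $\Irr(\Con(H_3/\Theta'))$ and enumerate every order ideal in this poset whose complement, as an induced subposet, has the combinatorial type of $\Irr(\Con(B_3))$ from Figure~\ref{IrrConB3 fig}. The claim is that no such ideal exists, so no congruence extending $\Theta'$ can have quotient isomorphic to $B_3$. Equivalently, in the language of shards, the shards forced to be removed by $\Sigma(qr)$ and $\Sigma(rqrq)$ cannot be extended to a shard-ideal whose complement is combinatorially a Coxeter arrangement of type $B_3$.

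The main obstacle is precisely this non-existence claim: the case is self-dual under Proposition~\ref{dual cong}, so it cannot be reduced to another case, and the $B_3$-quotient structure is not uniquely rigid enough to rule out by a one-line combinatorial argument. I expect the verification to reduce to a finite check on the poset $\Irr(\Con(H_3))$, which is small enough for explicit inspection (and straightforward computer confirmation), with the structural reason being an asymmetry: contracting $qr$ (the shortest left-side join-irreducible in the $\{q,r\}$-pentagon) while simultaneously contracting $rqrq$ (the longest right-side join-irreducible) produces a forcing cascade through the shard digraph that is incompatible with the shape of $\Irr(\Con(B_3))$, whereas all other eight ``staggered'' choices admit a compatible completion.
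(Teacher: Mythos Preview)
Your plan is essentially the paper's approach: case-by-case analysis over the nine pairs $(\gamma_1,\gamma_2)$, with computer-verified generators in the non-homogeneous cases, and Proposition~\ref{dual cong} used to cut work (the paper also notes this but does not spell out the pairing as you do).

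The one place where your plan is more laborious than necessary is the non-existence case $(\gamma_1,\gamma_2)=(qr,rqrq)$. You propose to enumerate all order ideals in $\Irr(\Con(H_3/\Theta'))$ and check that none has complement of the combinatorial type of $\Irr(\Con(B_3))$. The paper's argument is much shorter: a direct computation shows that $H_3/\Theta'$ already has exactly $48$ elements, the same as $|B_3|$, yet is not isomorphic to the weak order on $B_3$. Since any strictly coarser congruence yields a quotient with strictly fewer than $48$ elements, no further contraction can produce $B_3$. This cardinality coincidence makes the non-existence immediate and obviates the ideal enumeration.
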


\begin{proof}
Suppose $\eta:H_3\to B_3$ is a surjective homomorphism whose fibers define a congruence $\Theta$.
We again proceed by cases.

\noindent
\textbf{Case 1:}
\textit{$\Theta$ contracts $qrq$ and $rqr$.}
Computer calculations show that the quotient of $H_3$ modulo the congruence generated by $qrq$ and $rqr$ is isomorphic to the weak order on $B_3$.
Thus a unique $\eta$ exists.

\noindent
Case 1 is the only case in which $\Theta$ is homogeneous of degree $2$.
Cases 2--8 proceed just like Cases 7--9 in the proof of Theorem~\ref{H3 A3 thm}, except that ``inspection of the poset of irreducibles of the congruence lattice of the quotient'' is automated.
In each case, there exists a unique set of additional join-irreducibles required to generate $\Theta$, and these generators are given in parentheses.
Some of these cases can also be obtained from each other by Proposition~\ref{dual cong}.

\noindent
\textbf{Case 2:}
\textit{$\Theta$ contracts $qr$ and $rq$}
($qr$, $rq$, and $qrqsrqrs$).

\noindent
\textbf{Case 3:}
\textit{$\Theta$ contracts $qr$ and $rqr$}
($qr$, $rqr$, $qrqsrqrs$, $rqrqsrqrs$, and $srqrqsrqrs$).

\noindent
\textbf{Case 4:}
\textit{$\Theta$ contracts $qrq$ and $rq$}
($qrq$, $rq$, $qsrqrs$, $rqrqsr$, $srqrqsr$, $rqrs$, $qrqrs$, and $srqrs$).

\noindent
\textbf{Case 5:}
\textit{$\Theta$ contracts $qrq$ and $rqrq$}
($qrq$, $rqrq$, $qsrqrs$, $qrqrs$ and $rqsrqrs$).

\noindent
\textbf{Case 6:}
\textit{$\Theta$ contracts $qrqr$ and $rq$}
($qrqr$, $rq$, $qsrqrqsrqr$, $rqrqsrqr$, $qrqrqsrqr$, $srqrqsrqr$, $rqrqsr$, $srqrqsr$, $rqrs$, and $srqrs$).

\noindent
\textbf{Case 7:}
\textit{$\Theta$ contracts $qrqr$ and $rqr$}
($qrqr$, $rqr$, $srqrqsrqrs$, $qsrqrqsrqr$, $rqrqsrqr$, $qrqrqsrqr$, $srqrqsrqr$, and $rqrqsrqrs$).

\noindent
\textbf{Case 8:}
\textit{$\Theta$ contracts $qrqr$ and $rqrq$}
($qrqr$, $rqrq$, and $rqsrqrs$).

\noindent
\textbf{Case 9:}
\textit{$\Theta$ contracts $qr$ and $rqrq$.}
A computation shows that the quotient of $H_3$ modulo the congruence generated by $qr$ and $rqrq$ is not isomorphic to $B_3$.
Furthermore, this quotient has 48 elements, the same number as $B_3$.
Thus it is impossible to obtain $B_3$ by contracting additional join-irreducible elements.
\end{proof}

\subsection*{Homomorphisms from type $H_4$}
Finally, let $W$ be a Coxeter group of type $H_4$ and with $S=\set{q,r,s,t}$, $m(q,r)=5$, $m(r,s)=3$ and $m(s,t)=3$.
The classification of surjective homomorphisms from $H_4$ to $A_4$ and from $H_4$ to $B_4$ exactly follows the classification of surjective homomorphisms from $H_3$ to $A_3$ and from $H_3$ to $B_3$, as we now explain.

Let $\eta$ be any surjective homomorphism $\eta$ from $H_4$ to $A_4$ or $B_4$ with associated congruence $\Theta$.
By Proposition~\ref{diagram facts}, the restriction $\eta'$ of $\eta$ to the standard parabolic subgroup $W_{\set{q,r,s}}$ (of type $H_3$) is a homomorphism from $H_3$ to $A_3$ or $B_3$.
Thus $\eta'$ is described by Theorem~\ref{H3 A3 thm} or~\ref{H3 B3 thm}.
The congruence $\Theta'$ associated to $\eta'$ is the restriction of $\Theta$ to $W_{\set{q,r,s}}$.
The proofs of Theorems~\ref{H3 A3 thm} and~\ref{H3 B3 thm}, determine, for each surjective homomorphism, a set $\Gamma$ of join-irreducibles that generate the associated congruence.
Since $\Theta'$ agrees with $\Theta$ on $W_{\set{q,r,s}}$, the congruence associated to $\eta$ must also contract the join-irreducibles in $\Gamma$.
In each case, computer calculations show that the quotient of $H_4$ modulo the congruence generated by $\Gamma$ is isomorphic to $A_4$ or $B_4$.
This shows that for each surjective homomorphisms from $H_3$, there is a unique surjective homomorphisms from $H_4$.
Furthermore, Theorem~\ref{H3 B3 thm} and Proposition~\ref{diagram facts} imply that there is no surjective lattice homomorphism from $H_4$ to $B_4$ whose associated congruence contracts $qr$ and $rqrq$.
Thus we have the following theorems:

\begin{theorem}\label{H4 A4 thm}
There are exactly nine surjective lattice homomorphisms from $H_4$ to $A_4$ that fix $S$ pointwise:
For each choice of distinct $\gamma_1,\gamma_2\in\set{qr,qrq,qrqr}$ and distinct $\gamma_3,\gamma_4\in\set{rq,rqr,rqrq}$, there exists a unique such homomorphism whose associated congruence contracts $\gamma_1$, $\gamma_2$, $\gamma_3$ and $\gamma_4$.
\end{theorem}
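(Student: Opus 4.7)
The plan is to reduce the statement to the already-proved Theorem~\ref{H3 A3 thm} and then verify nine finite computations inside $H_4$. Let $\eta : H_4 \to A_4$ be a surjective lattice homomorphism fixing $S = \set{q,r,s,t}$ pointwise, with associated congruence $\Theta$. By Proposition~\ref{diagram facts}, the restriction $\eta'$ of $\eta$ to the standard parabolic subgroup $W_{\set{q,r,s}}$ (of type $H_3$) is a surjective lattice homomorphism onto the corresponding standard parabolic subgroup of $A_4$ (of type $A_3$), fixing $\set{q,r,s}$ pointwise. Theorem~\ref{H3 A3 thm} then forces $\eta'$ to be one of exactly nine homomorphisms, indexed by the choice of distinct $\gamma_1, \gamma_2 \in \set{qr, qrq, qrqr}$ together with distinct $\gamma_3, \gamma_4 \in \set{rq, rqr, rqrq}$. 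This already gives the upper bound of nine for the number of possible~$\eta$.

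For existence and uniqueness within each of the nine cases, inspection of the proof of Theorem~\ref{H3 A3 thm} supplies, for each choice, an explicit finite set $\Gamma$ of join-irreducibles of $W_{\set{q,r,s}}$ that generates the associated congruence on $H_3$: in Cases 1--6 these are just the four chosen join-irreducibles, and in Cases 7--9 there are two additional generators of support $\set{q,r,s}$. Since $W_{\set{q,r,s}}$ is a standard parabolic of $H_4$, the elements of $\Gamma$ remain join-irreducible when viewed in $H_4$, and since $\Theta$ restricts on $W_{\set{q,r,s}}$ to the congruence of $\eta'$, each element of $\Gamma$ is contracted by $\Theta$. Hence $\Theta$ coarsens the congruence $\Theta_\Gamma$ on $H_4$ generated by $\Gamma$.

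The computational heart of the argument is then: for each of the nine sets $\Gamma$, compute $\Theta_\Gamma$ on $H_4$ by closing under polygonal forcing (Figure~\ref{poly force}) and verify that $H_4 / \Theta_\Gamma$ is a lattice with $120$ elements isomorphic to the weak order on $A_4$. Having done this for a given $\Gamma$, the quotient map $H_4 \to H_4/\Theta_\Gamma$ is the desired $\eta$, and uniqueness follows because $\Theta$ coarsens $\Theta_\Gamma$ while both quotients have cardinality $120$, forcing $\Theta = \Theta_\Gamma$.

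The main obstacle is not conceptual but computational: one must carry out nine explicit quotient computations in a Coxeter group of order $14400$, which is well beyond reasonable pen-and-paper work but entirely within the reach of a computer algebra system. No new theoretical ideas beyond those developed for $H_3$ are required; the reduction via Proposition~\ref{diagram facts} to Theorem~\ref{H3 A3 thm} parallels exactly the treatments of $F_4$ and $H_3$ given earlier in this paper.
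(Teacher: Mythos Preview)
Your proposal is correct and follows essentially the same approach as the paper: restrict to the $H_3$ parabolic via Proposition~\ref{diagram facts}, invoke Theorem~\ref{H3 A3 thm} to pin down the nine possible restrictions and their generating sets $\Gamma$, and then verify by computer that in each case the congruence on $H_4$ generated by $\Gamma$ has quotient isomorphic to $A_4$, with uniqueness following from the cardinality count. The paper's own argument is exactly this, stated slightly more tersely.
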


\begin{theorem}\label{H4 B4 thm}
There are exactly eight surjective lattice homomorphisms from $H_4$ to $B_4$:
There is no surjective lattice homomorphism $\eta:H_4\to B_4$ whose associated congruence contracts $qr$ and $rqrq$.
For every other choice of $\gamma_1\in\set{qr,qrq,qrqr}$ and $\gamma_2\in\set{rq,rqr,rqrq}$, there exists a unique such homomorphism whose associated congruence contracts $\gamma_1$ and $\gamma_2$.
\end{theorem}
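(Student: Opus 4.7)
The plan is to follow exactly the strategy outlined in the expository paragraph preceding the statement, reducing the classification on $H_4$ to the already-established classification on $H_3$ given by Theorem~\ref{H3 B3 thm}, and then handling the extra generator $t$ by an explicit verification. Throughout, I fix $S = \{q,r,s,t\}$ and consider only compressive homomorphisms fixing $S$ pointwise.

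First, I would handle the nonexistence claim. Suppose $\eta : H_4 \to B_4$ is a surjective lattice homomorphism whose associated congruence $\Theta$ contracts both $qr$ and $rqrq$. By Proposition~\ref{diagram facts}(1), the restriction $\eta|_{W_{\{q,r,s\}}}$ is a surjective homomorphism from the type-$H_3$ parabolic of $H_4$ to the type-$B_3$ parabolic of $B_4$. Its fibers are the restriction of $\Theta$ to $W_{\{q,r,s\}}$, so this restricted congruence still contracts $qr$ and $rqrq$. This directly contradicts the nonexistence clause of Theorem~\ref{H3 B3 thm}.

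Next, for uniqueness given an allowed pair $(\gamma_1, \gamma_2)$: if two such $\eta, \eta'$ exist, then by Theorem~\ref{diagram uniqueness} it suffices to show their restrictions to each rank-two standard parabolic agree. For $\{a,b\} \neq \{q,r\}$ one has $m(a,b) \in \{2,3\}$ with $m(a,b) = m'(a,b)$, so only the identity homomorphism exists on $W_{\{a,b\}}$. For $\{q,r\}$, the restriction to $W_{\{q,r\}}$ is determined by the restriction to $W_{\{q,r,s\}}$, and the latter is uniquely determined by $(\gamma_1, \gamma_2)$ via the uniqueness part of Theorem~\ref{H3 B3 thm}. Hence $\eta = \eta'$.

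Finally, for existence, I would proceed case-by-case on the eight allowed pairs $(\gamma_1, \gamma_2)$. The proof of Theorem~\ref{H3 B3 thm} produces, for each such pair, an explicit finite set $\Gamma$ of join-irreducibles of $W_{\{q,r,s\}}$ (listed there in parentheses) whose contraction generates the corresponding $H_3 \to B_3$ congruence. These join-irreducibles remain join-irreducibles of $H_4$. Let $\Theta_\Gamma$ be the congruence on $H_4$ generated by $\Gamma$, and let $\nu : H_4 \to H_4/\Theta_\Gamma$ be the natural map. I would verify, by a finite computation on the shard poset $\Irr(\Con(H_4))$ (or equivalently on $\Irr(\Con(H_4/\Theta_\Gamma))$), that $H_4/\Theta_\Gamma$ is isomorphic to the weak order on $B_4$; composing $\nu$ with this isomorphism gives the required $\eta$, whose restriction to $W_{\{q,r,s\}}$ agrees (by construction and by the uniqueness in Theorem~\ref{H3 B3 thm}) with the designated $H_3 \to B_3$ map.

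The main obstacle is this last computation: introducing $t$ adds many new polygons, which may force additional contractions via the polygonal rule of Section~\ref{shard sec}, and a priori one could fear that these additional forced contractions collapse the quotient beyond $B_4$. The content of Theorem~\ref{H4 B4 thm} is precisely that they do not (except in the excluded case $(qr, rqrq)$, which fails already at the $H_3$ level). Because the calculations are finite and analogous to those already carried out for each case in the proof of Theorem~\ref{H3 B3 thm}, I expect them to be straightforward to automate; the nontrivial conceptual content is entirely absorbed by Theorems~\ref{H3 B3 thm} and~\ref{diagram uniqueness} together with Proposition~\ref{diagram facts}.
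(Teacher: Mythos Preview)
Your overall strategy matches the paper's exactly: restrict to the $H_3$ parabolic, import the generating sets $\Gamma$ from the proof of Theorem~\ref{H3 B3 thm}, and verify by computer that the congruence on $H_4$ generated by $\Gamma$ already has quotient $B_4$. Your nonexistence and existence arguments are the paper's arguments.

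There is one logical slip in your uniqueness argument. You invoke Theorem~\ref{diagram uniqueness} to conclude that agreement on rank-two parabolics forces $\eta=\eta'$. But in the paper's logical order, Theorem~\ref{diagram uniqueness} is \emph{proved} by the type-by-type analysis of Sections~\ref{dihedral sec}--\ref{exceptional sec}, and Theorem~\ref{H4 B4 thm} is one of the cases that completes that proof. So citing Theorem~\ref{diagram uniqueness} here is circular. The paper avoids this by deducing uniqueness directly from the existence computation: since the restriction of any such $\eta$ to $W_{\{q,r,s\}}$ is pinned down by Theorem~\ref{H3 B3 thm}, the congruence $\Theta$ must contract every element of $\Gamma$; hence $\Theta$ is coarser than or equal to the congruence generated by $\Gamma$; but the latter already has quotient isomorphic to $B_4$, so a strictly coarser $\Theta$ would have too few classes to surject onto $B_4$. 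This cardinality argument replaces your appeal to Theorem~\ref{diagram uniqueness} and keeps the logic acyclic.
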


\section{Lattice homomorphisms between Cambrian lattices}\label{camb sec}
In this section, we show how the results of this paper on lattice homomorphisms between weak orders imply similar results on lattice homomorphisms between Cambrian lattices.

A \newword{Coxeter element} of a Coxeter group $W$ is an element $c$ that can be written in the form $s_1s_2\cdots s_n$, where $s_1,s_2,\ldots,s_n$ are the elements of $S$.
There may be several total orders $s_1,s_2,\ldots,s_n$ on $S$ whose product is $c$.
These differ only by commutations of commuting elements of $S$.
In particular, for every edge $r$---$s$ in the Coxeter diagram for $W$ (i.e.\ for every pair $r,s$ with $m(r,s)>2$), either $r$ precedes $s$ in every reduced word for $c$ or $s$ precedes $r$ in every reduced word for $c$.
We use the shorthand ``$r$ precedes $s$ in $c$'' or ``$s$ precedes $r$ in $c$'' for these possibilities.

The initial data defining a Cambrian lattice are a finite Coxeter group $W$ and a Coxeter element $c$.
The \newword{Cambrian congruence} is the homogeneous degree-$2$ congruence $\Theta_c$ on $W$ generated by the join-irreducible elements $\alt_k(s,r)$ for every pair $r,s\in S$ such that $r$ precedes $s$ in $c$ and every $k$ from $2$ to $m(r,s)-1$.
Here, as in Section~\ref{erase edge sec}, the notation $\alt_k(s,r)$ stands for the word with $k$ letters, starting with~$s$ and then alternating $r$, $s$, $r$, etc.
Equivalently, $\Theta_c$ is the finest congruence with $s\equiv\alt_{m(r,s)-1}(s,r)$ for every pair $r,s\in S$ such that $r$ precedes $s$ in $c$.
The quotient $W/\Theta_c$ is the \newword{Cambrian lattice}.
The Cambrian lattice is isomorphic to the subposet of $W$ induced by the bottom elements of $\Theta_c$-classes.
To distinguish between the two objects, we write $W/\Theta_c$ for the Cambrian lattice constructed as a quotient of $W$ and write $\Camb(W,c)$ for the Cambrian lattice constructed as the subposet of $W$ induced by bottom elements of $\Theta_c$-classes.

The key result of this section is the following theorem, which will allow us to completely classify surjective homomorphisms between Cambrian lattices, using the classification results on surjective homomorphisms between weak orders.
(These classification results are Theorems~\ref{camb para factor}, \ref{camb exist unique}, and~\ref{camb diagram}.)

\begin{theorem}\label{key camb}
Let $\eta:W\to W'$ be a surjective lattice homomorphism whose associated congruence $\Psi$ is generated by a set $\Gamma$ of join-irreducible elements.
Let $c=s_1s_2\cdots s_n$ be a Coxeter element of $W$ and let $c'=\eta(s_1)\eta(s_2)\cdots\eta(s_n)\in W'$.
Then the restriction of $\eta$ is a surjective lattice homomorphism from $\Camb(W,c)$ to $\Camb(W',c')$.
The associated congruence is generated by $\Gamma\cap\Camb(W,c)$.
\end{theorem}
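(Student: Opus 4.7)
The plan is to establish, in order: a congruence refinement producing a quotient-level map, identification of that map with the restriction of $\eta$ to subposets, and the generating set description. First I would prove $\Theta_c \le \eta^{-1}(\Theta_{c'})$ in $\Con(W)$ by checking each generator $\alt_k(s,r) \equiv \alt_{k-1}(s,r)$ (with $r$ preceding $s$ in $c$ and $k \in \set{2, \ldots, m(r,s) - 1}$). Proposition~\ref{basic facts}(4) gives that $\eta$ restricts to a surjective homomorphism $W_{\set{r,s}} \to W'_{\eta(\set{r,s})\setminus\set{1'}}$. If $\eta(r) = 1'$ or $\eta(s) = 1'$, the restriction is a parabolic projection and $\eta(\alt_k(s,r))$ is constant for $k \ge 1$; otherwise both images lie in $S'$, the order-preservation from $c$ to $c'$ ensures $\eta(r)$ precedes $\eta(s)$ in $c'$, and the dihedral surjection sends the $\alt$-chain weakly increasingly onto the $\alt$-chain in $W'_{\set{\eta(r),\eta(s)}}$, with consecutive terms either equal or related by precisely a generator of $\Theta_{c'}$. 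A parallel argument gives every generator of $\Theta_{c'}$ as the image of a $\Theta_c$-equivalence in $W$, yielding $\bar{\Theta_c} = \Theta_{c'}$ on $W'$ (equivalently $\Theta_c \vee \Psi = \eta^{-1}(\Theta_{c'})$); this produces the surjective quotient-level lattice homomorphism $\bar\eta : \Camb(W,c) \to \Camb(W',c')$.

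Second, I would identify $\bar\eta$ with the literal restriction of $\eta$ to the subposet $\Camb(W,c) \subseteq W$, equivalently prove $\eta(\Camb(W,c)) \subseteq \Camb(W',c')$. This is where the sortable-element description of $\Camb(W,c)$ referenced at the start of Section~\ref{camb sec} enters: the goal reduces to showing that $\eta$ carries $c$-sortable elements to $c'$-sortable elements. The rank-two reduction from Step~1 cuts the verification down to a local check on dihedral standard parabolics, where sortability is controlled by the alternating chains and the Cambrian orientation, and where the dihedral image chain lands in the correct alternating sub-chain of $W'_{\set{\eta(r),\eta(s)}}$. Once $\eta(\Camb(W,c)) \subseteq \Camb(W',c')$ is established, surjectivity of the restriction follows from surjectivity of $\bar\eta$, and the restriction is automatically a lattice homomorphism because $\Camb(W,c)$ is a sublattice of $W$.

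For the final claim I would use the shard-poset description recalled in Section~\ref{shard sec}. Writing $J_\Psi$ and $J_{\Theta_c}$ for the order ideals in $\Irr(\Con(W))$ corresponding to $\Psi$ and $\Theta_c$, and $J_c$ for the complement of $J_{\Theta_c}$, the shard poset of $\Camb(W,c)$ is the induced subposet of $\Irr(\Con(W))$ on $J_c$, and the restricted congruence on $\Camb(W,c)$ corresponds to the ideal $J_\Psi \cap J_c$. Any generating set $\Gamma$ for $\Psi$ must contain every maximal element of $J_\Psi$; moreover, if $j \in J_c$ lies below some $\gamma \in \Gamma$ then $\gamma \in J_c$ (because $J_c$ is the complement of an order ideal), so $\Gamma \cap \Camb(W,c) = \Gamma \cap J_c$ generates $J_\Psi \cap J_c$ as an ideal of the restricted poset. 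The hard step is the second one: passing from the congruence-level equality $\bar{\Theta_c} = \Theta_{c'}$ to the literal inclusion $\eta(\Camb(W,c)) \subseteq \Camb(W',c')$ is not formal and will need either the $c$-sortable characterization together with a rank-two reduction or a direct shard-theoretic argument controlling the action of $\eta$ on bottom elements of Cambrian classes; the other steps are standard applications of the machinery recalled in Sections~\ref{delete vert sec}--\ref{shard sec}.
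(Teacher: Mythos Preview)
Your Steps~1 and~3 are essentially the paper's argument, repackaged: the paper proves the congruence identities via Lemma~\ref{Psi and tilde gen} and Theorem~\ref{3 isom}, and your filter argument in Step~3 (that $\Gamma\cap J_c$ generates $J_\Psi\cap J_c$ because $J_c$ is upward-closed) is exactly the content of Lemma~\ref{Psi and tilde gen} with the roles of the two congruences swapped.

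The gap is in Step~2. Your proposed mechanism---reducing ``$\eta$ maps $c$-sortable to $c'$-sortable'' to a check on rank-two \emph{standard} parabolics---does not work. Sortability is not determined by the projections $w_{\{r,s\}}$ to standard dihedral parabolics; the alignment characterization of sortable elements involves \emph{all} rank-two sub-root-systems, not just the standard ones, and there is no reason for $\eta$ to respect non-standard rank-two subsystems. (You correctly observe that $\eta$ commutes with standard parabolic projections because $w_J = w\wedge w_0(J)$, but that commutation does not extend to non-standard parabolics.) So the ``rank-two reduction from Step~1'' does not carry over.

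The paper avoids this difficulty entirely by arguing indirectly. It never proves $\eta(\Camb(W,c))\subseteq\Camb(W',c')$ head-on. Instead it first proves the \emph{reverse} inclusion $\Camb(W',c')\subseteq\eta(\Camb(W,c))$ by a one-line bottom-element argument: if $x\in\Camb(W',c')$ then $\eta^{-1}(x)$ is a $\Psi$-class whose bottom element is also a bottom element for $\Theta_c$. Then it uses two facts you did not invoke: (i) $\Camb(W,c)$ is a \emph{sublattice} of $W$ (Theorem~\ref{sort_camb thm}), so the restriction of $\eta$ is automatically a lattice homomorphism onto its image; and (ii) Lemma~\ref{sort j*}, which says that for a $c$-sortable join-irreducible $j$, the element $j_*$ is also $c$-sortable---hence the restriction of $\eta$ contracts every $j\in\Gamma\cap\Camb(W,c)$. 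Since Step~1 already produced a surjection $\Camb(W,c)\twoheadrightarrow\Camb(W',c')$ whose congruence is generated by exactly that set, the image of the restriction has at most $|\Camb(W',c')|$ elements; combined with the reverse inclusion, this forces equality. The point is that Lemma~\ref{sort j*} plus a size comparison replaces the direct sortable-to-sortable verification you were attempting.
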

To understand Theorem~\ref{key camb} correctly, one should keep in mind that $\eta$ is a lattice homomorphism, not a group homomorphism, so that $c'$ need not be equal to $\eta(c)$.
(For example, in the case $n=2$ of Theorem~\ref{simion thm}, the lattice homomorphism $\eta_\sigma$ sends the Coxeter element $s_0s_1$ to $s_1$.)
The element $c'$ is a Coxeter element of $W'$ in light of Proposition~\ref{basic facts} and Theorem~\ref{para factor}.

Before assembling the tools necessary to prove Theorem~\ref{key camb}, we illustrate the theorem by extending Example~\ref{miraculous}.

\begin{example}\label{camb hom ex}
Recall that Figure~\ref{B3toA3} indicates a congruence $\Psi$ on $B_3$ such that the quotient $B_3/\Psi$ is isomorphic to $S_4$.
As mentioned just after Theorem~\ref{delta thm}, the corresponding surjective homomorphism is $\eta_\delta:B_3\to S_4$.
The congruence $\Psi$ is generated by $\Gamma=\set{s_0s_1s_0,s_1s_0s_1}$.

Let $c$ be the Coxeter element $s_0s_1s_2$ of $B_3$.
The congruence $\Theta_c$ on $B_3$ is generated by $\set{s_1s_0,s_1s_0s_1,s_2s_1}$, as illustrated in Figure~\ref{cambriandiagram}.a.
\begin{figure}
\begin{tabular}{ccc}
\scalebox{.85}{\includegraphics{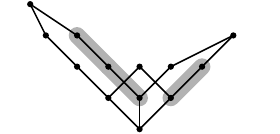}}&&
\scalebox{.85}{\includegraphics{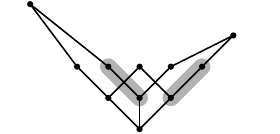}}\\
(a)&&(b)
\end{tabular}
\caption{a: Edge contractions that generate the $s_0s_1s_2$-Cambrian congruence on $B_3$.
b: Edge contractions that generate the $s_1s_2s_3$-Cambrian congruence on $S_4$.}
\label{cambriandiagram}
\end{figure}
Let $c'$ be the Coxeter element $s_1s_2s_3$ of $S_4$.
The Cambrian congruence $\Theta_{c'}$ on $S_4$ is generated by the set $\set{s_2s_1,s_3s_2}$, as illustrated in Figure~\ref{cambriandiagram}.b.
The set $\set{s_2s_1,s_3s_2}$ is obtained by applying $\eta_\delta$ to each element of $\set{s_1s_0,s_1s_0s_1,s_2s_1}$ that is not contracted by $\Psi$.

Figure~\ref{camb B3toA3 cong} shows the Cambrian congruence $\Theta_c$ on $B_3$ and the Cambrian congruence $\Theta_{c'}$ on $S_n$.
\begin{figure}
\begin{tabular}{ccc}
\scalebox{.85}{\includegraphics{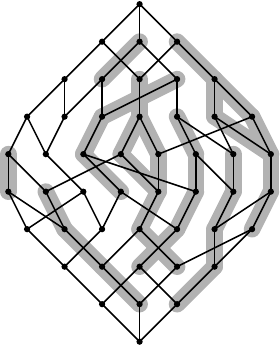}}&&\scalebox{.85}{\includegraphics{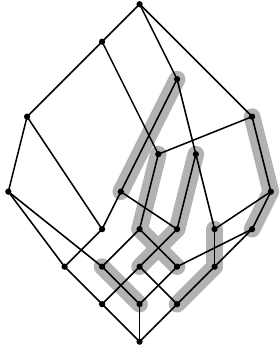}}\\
(a)&&(b)
\end{tabular}
\caption{a:  The Cambrian congruence $\Theta_c$ on $B_3$.  b:  The Cambrian congruence $\Theta_{c'}$ on $S_n$.}
\label{camb B3toA3 cong}
\end{figure}
\begin{figure}
\begin{tabular}{ccc}
\scalebox{.85}{\includegraphics{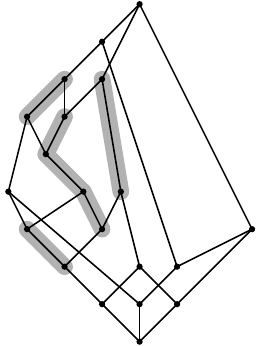}}&&\scalebox{.85}{\includegraphics{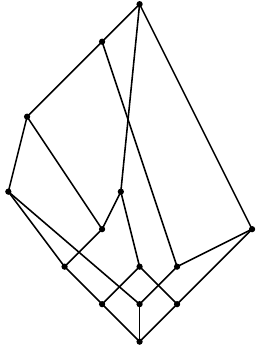}}\\
(a)&&(b)
\end{tabular}
\caption{a:  The Cambrian lattice $\Camb(B_3,c)\cong B_3/\Theta_c$. \qquad b:  The Cambrian lattice $\Camb(S_4,c')\cong S_4/\Theta_{c'}$.}
\label{camb B3toA3}
\end{figure}
Here, $S_4$ is represented, as in Figure~\ref{B3toA3}.b, as the subposet of $B_3$ induced by bottom elements of the congruence $\Psi$.
Figure~\ref{camb B3toA3} shows the Cambrian lattice $\Camb(B_3,c)$ and the Cambrian lattice $\Camb(S_4,c')$.
The shaded edges of $\Camb(B_3,c)$ indicate the congruence on $\Camb(B_3,c)$ whose quotient is $\Camb(S_4,c')$.
This congruence is generated by the join-irreducible element $s_0s_1s_0$, because $\Gamma\cap\Camb(B_3,c)=\set{s_0s_1s_0}$.
\end{example}

The proof of Theorem~\ref{key camb} depends on general lattice-theoretic results, but also on special properties of the subposet $\Camb(W,c)$ of $W$.
To explain these properties, we recall from ~\cite{sort_camb} the characterization of bottom elements of $\Theta_c$-classes as the $c$-sortable elements of $W$.

To define $c$-sortable elements, we fix a reduced word $s_1\cdots s_n$ for $c$ and construct, for any element $w\in W$, a canonical reduced word for $w$.
Write a half-infinite word $c^\infty=s_1\cdots s_n|s_1\cdots s_n|s_1\cdots s_n|\ldots$,
where the symbols ``$\,|\,$'' are dividers that mark the locations where the sequence $s_1\cdots s_n$ begins again.
Out of all subwords of $c^\infty$ that are reduced words for $w$, the \newword{$(s_1\cdots s_n)$-sorting word} for $w$ is the one that is lexicographically leftmost, as a sequence of positions in $c^\infty$.
For convenience, when we write $(s_1\cdots s_n)$-sorting words, we include the dividers that occur between letters in the lexicographically leftmost reduced subword.
Thus for example, if $W=S_4$, the $(s_1s_2s_3)$-sorting word for the longest element is $s_1s_2s_3|s_1s_2|s_1$, and the $(s_2s_1s_3)$-sorting word for the longest element is $s_2s_1s_3|s_2s_1s_3$.
Each $(s_1\cdots s_n)$-sorting word defines a sequence of subsets of $S$, by taking the elements between successive dividers, and this sequence depends only on $c$, not on $s_1\cdots s_n$.
An element $w$ is \newword{$c$-sortable} if this sequence of subsets is weakly decreasing.

The following is a combination of \cite[Theorems~1.1, 1.2, 1.4]{sort_camb}.
\begin{theorem}\label{sort_camb thm}
An element of $W$ is the bottom element of its $\Theta_c$-class if and only if it is $c$-sortable.
The $c$-sortable elements constitute a sublattice of the weak order.
\end{theorem}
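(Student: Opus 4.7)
The plan is to prove both assertions simultaneously by induction on $|S|$, exploiting a recursive structure on $c$-sortable elements that mirrors the recursive structure of the congruence $\Theta_c$. Let $s=s_1$ be the initial letter of the reduced word $s_1\cdots s_n$ for $c$. The first step is to establish the recursion: an element $w\in W$ is $c$-sortable if and only if either (a)~$\ell(sw)>\ell(w)$ (so $w\in W_{\langle s\rangle}$) and $w$ is $c'$-sortable, where $c'=s_2\cdots s_n$ is viewed as a Coxeter element of $W_{\langle s\rangle}$; or (b)~$\ell(sw)<\ell(w)$ and $sw$ is $scs$-sortable for the cyclic rotation $scs$ of~$c$. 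This falls out directly from unpacking the construction of the $(s_1\cdots s_n)$-sorting word: the first letter of the sorting word of $w$ is $s$ exactly when $s$ is a left inversion of~$w$.

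Next I would introduce the downward projection $\pidown^c\colon W\to W$ sending $w$ to the largest $c$-sortable element weakly below $w$, and show that $\pidown^c$ is a surjective lattice homomorphism onto its image. In case~(a), $\pidown^c$ factors through the parabolic homomorphism $\eta_{\langle s\rangle}$ of Theorem~\ref{para cong ji}, composed with $\pidown^{c'}$ in $W_{\langle s\rangle}$, which is a lattice quotient by induction. In case~(b), left multiplication by $s$ is an isomorphism between the two half-intervals of $W$ determined by $s$ and intertwines $\pidown^c$ with $\pidown^{scs}$. Gluing these two cases yields the lattice-quotient property of $\pidown^c$.

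To identify the fibers of $\pidown^c$ with the classes of $\Theta_c$, I would show that the two congruences contract exactly the same join-irreducibles. The congruence $\Theta_c$ is generated by $\alt_k(s,r)$ for each pair $r,s\in S$ with $r$ preceding $s$ in $c$ and each $k\in\set{2,\ldots,m(r,s)-1}$; the polygonal forcing rule (Figure~\ref{poly force}), applied in each rank-two standard parabolic $W_{\set{r,s}}$, propagates these generators to determine all contracted join-irreducibles throughout $W$. The recursion then lets one read off the join-irreducibles contracted by the fibers of $\pidown^c$ and match them with the generators of $\Theta_c$. This gives the first assertion of the theorem, and the second assertion is equivalent to the statement that the image of $\pidown^c$ is closed under the joins of $W$.

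The main obstacle is exactly this closure under joins, or equivalently join-preservation by $\pidown^c$. Meet-preservation is essentially automatic from the recursion, but join-preservation requires care around the case split, especially in case~(b) where the rotation $c\mapsto scs$ interacts nontrivially with the join. In general, the set of bottom elements of the classes of a lattice congruence forms only a meet-subsemilattice of the ambient lattice, so this is a special property of $\Theta_c$ that must be established directly. The cleanest route is to obtain an explicit description of the sorting word of $w\join v$ in terms of the sorting words of $w$ and~$v$, which is the core technical content of \cite{sort_camb}.
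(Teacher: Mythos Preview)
The paper does not prove this theorem; it is imported wholesale from \cite{sort_camb} as ``a combination of \cite[Theorems~1.1, 1.2, 1.4]{sort_camb}.''  So there is no in-paper argument to compare against.  Your outline is broadly in the spirit of \cite{sort_camb}---the recursion on $c$-sortable elements keyed to the initial letter $s$ of $c$, and the projection $\pidown^c$, are indeed central there---but several points need correction.

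First, your induction is announced as induction on $|S|$, yet case~(b) does not reduce $|S|$: passing from $c$ to $scs$ keeps you in the same group $W$.  The arguments in \cite{sort_camb} use a double induction, on the rank of $W$ and on $\ell(w)$: case~(a) drops rank, case~(b) drops length (since $\ell(sw)=\ell(w)-1$).  Without this refinement your recursion does not terminate.

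Second, the parenthetical ``(so $w\in W_{\langle s\rangle}$)'' in case~(a) is not a consequence of $\ell(sw)>\ell(w)$ alone.  A general $w$ with $s$ not a left descent need not lie in $W_{\langle s\rangle}$.  Rather, membership in $W_{\langle s\rangle}$ is part of the \emph{characterization}: if $s$ is initial in $c$ and $w$ is $c$-sortable with $s\notin\inv(w)$, then $s$ is absent from the first block of the sorting word and hence from every block, forcing $w\in W_{\langle s\rangle}$.  The recursion should read: $w$ is $c$-sortable iff either $w\in W_{\langle s\rangle}$ and $w$ is $sc$-sortable there, or $\ell(sw)<\ell(w)$ and $sw$ is $scs$-sortable.

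Third, your description of ``the core technical content of \cite{sort_camb}'' as an explicit sorting word for $w\join v$ is not how that paper proceeds.  The key device is the notion of \emph{$c$-alignment}, a condition on $\inv(w)$ phrased in terms of rank-two reflection subgroups, together with the theorem that $c$-sortable is equivalent to $c$-aligned.  Closure under meet and join is established for $c$-aligned elements by analyzing inversion sets, not by producing a formula for the sorting word of a join.  Your acknowledgment that join-closure is the crux is correct, but the route you name is not the one taken.
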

In general, the set of bottom elements of a congruence is a join-sublattice but need not be a sublattice.

As a special case of a general lattice-theoretic result (found, for example, in \cite[Proposition~9-5.11]{regions9}), a $c$-sortable element $v$ represents a join-irreducible element of the Cambrian lattice $W/\Theta_c$ if and only if $v$ is join-irreducible as an element of $W$.  The following simple lemma will be used in the proof of Theorem~\ref{key camb}.

\begin{lemma}\label{sort j*}
If $j$ is a $c$-sortable join-irreducible element and $j_*$ is the unique element covered by $j$, then $j_*$ is $c$-sortable.
\end{lemma}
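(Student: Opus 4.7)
The plan is to show that the $c$-sorting word for $j_*$ is exactly the $c$-sorting word for $j$ with its final letter deleted, and then to read off $c$-sortability from this.

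First I would identify the final letter of the $c$-sorting word for $j$. Since $j$ is join-irreducible, there is a unique $s\in S$ with $\ell(js)<\ell(j)$, and $j_*=js$. The last letter of any reduced word for $j$ is a right descent of $j$, so the last letter of the $(s_1\cdots s_n)$-sorting word for $j$ must be this unique~$s$. Deleting this last letter therefore gives a reduced subword of $c^\infty$ expressing $j_*$.

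Next I would show this truncated word is itself the $c$-sorting word for $j_*$. Suppose the $c$-sorting word for $j$ occupies positions $(i_1,\ldots,i_k)$ in $c^\infty$, with $i_k$ the position of the final $s$. If the $c$-sorting word for $j_*$ occupied some other positions $(j_1,\ldots,j_{k-1})$ that were lexicographically strictly less than $(i_1,\ldots,i_{k-1})$, then, letting $j_k$ be the next occurrence of $s$ in $c^\infty$ after position $j_{k-1}$, the sequence $(j_1,\ldots,j_k)$ would be a reduced subword of $c^\infty$ for $j_*\cdot s=j$, and it would be lexicographically strictly less than $(i_1,\ldots,i_k)$, contradicting the definition of the $c$-sorting word for~$j$. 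Hence $(j_1,\ldots,j_{k-1})=(i_1,\ldots,i_{k-1})$, as desired.

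Finally I would check that $j_*$ is $c$-sortable. The $c$-sorting word for $j$ determines a weakly decreasing sequence of subsets $A_1\supseteq A_2\supseteq\cdots\supseteq A_m$ of~$S$ (one subset per copy of $c$ used). The final letter $s$ lies in $A_m$, so removing it yields the sequence $A_1\supseteq\cdots\supseteq A_{m-1}\supseteq(A_m\setminus\{s\})$ if $A_m\neq\{s\}$, or else the truncated sequence $A_1\supseteq\cdots\supseteq A_{m-1}$. Because $A_{m-1}\supseteq A_m\supseteq A_m\setminus\{s\}$, the new sequence is still weakly decreasing, so $j_*$ is $c$-sortable.

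The only real obstacle is the lex-minimality argument in the second step; the rest is bookkeeping about the block structure of the $c$-sorting word. Once one knows that the last letter of the sorting word of a join-irreducible $j$ is forced to be its unique right descent, the exchange argument above closes everything up.
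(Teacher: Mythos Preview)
Your proof is correct and follows essentially the same approach as the paper's. The paper's proof is a terse two-sentence version: it simply asserts that if $a_1\cdots a_k$ is the $(s_1\cdots s_n)$-sorting word for $j$, then $a_1\cdots a_{k-1}$ is the $(s_1\cdots s_n)$-sorting word for a $c$-sortable element $x$ covered by $j$, and then notes that join-irreducibility forces $x=j_*$. You have filled in exactly the details the paper leaves implicit---identifying the last letter as the unique right descent, giving the lex-minimality argument, and verifying that the block sequence stays weakly decreasing.
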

\begin{proof}
If $c=s_1\cdots s_n$ and $a_1a_2\cdots a_k$ is the $(s_1\cdots s_n)$-sorting word for $j$, then $a_1a_2\cdots a_{k-1}$ is the $(s_1\cdots s_n)$-sorting word for a $c$-sortable element $x$ covered by~$j$.
But $j_*$ is the unique element covered by $j$, so $j_*=x$, which is $c$-sortable.
\end{proof}	

We need one of the standard Isomorphism Theorems for lattices.  
(See, for example, \cite[Theorem~9-5.22]{regions9}.)
This is easily proved directly, or follows as a special case of the same Isomorphism Theorem in universal algebra.
The notation $[x]_\Theta$ stands for the $\Theta$-class of $x$.
\begin{theorem}\label{3 isom}
Let $L$ be a finite lattice and let $\Theta$ and $\Psi$ be congruences on $L$ such that $\Psi$ refines $\Theta$.
Define a relation $\Theta/\Psi$ on $L/\Psi$ by setting $[x]_\Psi\equiv[y]_\Psi$ modulo $\Theta/\Psi$ if and only if $x\equiv y$ modulo $\Theta$.
Then $\Theta/\Psi$ is a congruence and the map $\beta:L/\Theta\to(L/\Psi)/(\Theta/\Psi)$ sending $[x]_\Theta$ to the set of $\Psi$-classes contained in $[x]_\Theta$ is an isomorphism.
The inverse isomorphism sends a $\Theta/\Psi$-class $C$ in $(L/\Psi)/(\Theta/\Psi)$ to the union of the $\Psi$-classes in $C$.
\end{theorem}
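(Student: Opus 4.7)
The plan is to verify each claim in turn, following the standard template for an isomorphism theorem but taking care to use the finite-lattice convention that congruence classes are intervals only where convenient. Since the underlying set-theoretic content is exactly the Third Isomorphism Theorem from universal algebra (instantiated in the variety of lattices), the argument will be essentially formal.

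First I would check that the relation $\Theta/\Psi$ is well-defined on $L/\Psi$. Suppose $[x]_\Psi=[x']_\Psi$ and $[y]_\Psi=[y']_\Psi$. Because $\Psi$ refines $\Theta$, we have $x\equiv x'$ and $y\equiv y'$ modulo $\Theta$, so $x\equiv y$ modulo $\Theta$ if and only if $x'\equiv y'$ modulo $\Theta$. This shows that the definition does not depend on the choice of representative. Reflexivity, symmetry, and transitivity of $\Theta/\Psi$ are then inherited directly from $\Theta$.

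Next I would check that $\Theta/\Psi$ is a lattice congruence on $L/\Psi$. Suppose $[x_1]_\Psi\equiv[y_1]_\Psi$ and $[x_2]_\Psi\equiv[y_2]_\Psi$ modulo $\Theta/\Psi$. By definition, $x_1\equiv y_1$ and $x_2\equiv y_2$ modulo $\Theta$, so since $\Theta$ is a congruence on $L$, $x_1\meet x_2\equiv y_1\meet y_2$ and $x_1\join x_2\equiv y_1\join y_2$ modulo $\Theta$. Recalling that meets and joins in $L/\Psi$ are computed as $[x_1]_\Psi\meet[x_2]_\Psi=[x_1\meet x_2]_\Psi$ and similarly for joins, the definition of $\Theta/\Psi$ then gives $[x_1]_\Psi\meet[x_2]_\Psi\equiv[y_1]_\Psi\meet[y_2]_\Psi$ and analogously for joins, modulo $\Theta/\Psi$.

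Finally I would construct the map $\beta:L/\Theta\to(L/\Psi)/(\Theta/\Psi)$ by $\beta([x]_\Theta)=[[x]_\Psi]_{\Theta/\Psi}$ and show it is a lattice isomorphism. Well-definedness is immediate: if $x\equiv y$ modulo $\Theta$, then by definition $[x]_\Psi\equiv[y]_\Psi$ modulo $\Theta/\Psi$. Surjectivity is trivial since every $\Theta/\Psi$-class contains some $[x]_\Psi$. For injectivity, if $\beta([x]_\Theta)=\beta([y]_\Theta)$ then $[x]_\Psi$ and $[y]_\Psi$ lie in the same $\Theta/\Psi$-class, so by definition $x\equiv y$ modulo $\Theta$, hence $[x]_\Theta=[y]_\Theta$. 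The fact that $\beta$ preserves meet and join follows by unwinding the three quotient-meet identities: $\beta([x]_\Theta\meet[y]_\Theta)=\beta([x\meet y]_\Theta)=[[x\meet y]_\Psi]_{\Theta/\Psi}=[[x]_\Psi\meet[y]_\Psi]_{\Theta/\Psi}=\beta([x]_\Theta)\meet\beta([y]_\Theta)$, and similarly for joins. For the description of $\beta^{-1}$, it suffices to observe that the union of the $\Psi$-classes in a $\Theta/\Psi$-class $C$ is exactly a single $\Theta$-class: any two $x,y$ in this union satisfy $[x]_\Psi\equiv[y]_\Psi$ modulo $\Theta/\Psi$, hence $x\equiv y$ modulo $\Theta$; conversely, the entire $\Theta$-class of any such $x$ must be included since $\Psi$ refines $\Theta$. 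There is no real obstacle in any step; the only subtlety worth highlighting is that the well-definedness check in the first step is exactly the place where the refinement hypothesis $\Psi\le\Theta$ is used, and nowhere do we need the finite-lattice assumption beyond what is already used to define quotient lattices.
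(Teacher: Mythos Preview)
Your proof is correct and matches the paper's approach: the paper does not give its own argument but simply notes that the result ``is easily proved directly, or follows as a special case of the same Isomorphism Theorem in universal algebra,'' which is precisely what you have done. Your formula $\beta([x]_\Theta)=[[x]_\Psi]_{\Theta/\Psi}$ agrees with the paper's description of $\beta$ as sending $[x]_\Theta$ to the set of $\Psi$-classes contained in $[x]_\Theta$, since those are exactly the $\Psi$-classes $\Theta/\Psi$-equivalent to $[x]_\Psi$.
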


The following lemma will also be useful.

\begin{lemma}\label{Psi and tilde gen}
Let $L$ be a finite lattice and let $\Gamma$ and $\tilde\Gamma$ be sets of join-irreducible elements in $L$.
Let $\Psi$ be the congruence on $L$ generated by $\Gamma$ and let $I$ be the set of join-irreducible elements contracted by $\Psi$.
Define $\tilde\Psi$ and $\tilde{I}$ similarly.
Then the congruence $(\Psi\join\tilde\Psi)/\Psi$ on $L/\Psi$ is generated by the set $\set{[j]_{\Psi}:j\in \tilde\Gamma\setminus I}$ of join-irreducible elements of $L/\Psi$.
\end{lemma}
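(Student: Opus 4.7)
The plan is to apply the Third Isomorphism Theorem for lattices (Theorem~\ref{3 isom}) to convert the statement into one about congruences on $L$ that refine $\Psi$. Explicitly, let $\Phi$ denote the congruence on $L/\Psi$ generated by $\set{[j]_\Psi:j\in\tilde\Gamma\setminus I}$, and let $\hat\Phi$ be the unique congruence on $L$ with $\Psi\le\hat\Phi$ and $\hat\Phi/\Psi=\Phi$ (so $x\equiv y$ modulo $\hat\Phi$ iff $[x]_\Psi\equiv[y]_\Psi$ modulo $\Phi$). Via the bijection furnished by Theorem~\ref{3 isom}, the lemma reduces to the single assertion $\hat\Phi=\Psi\join\tilde\Psi$, which I would prove by establishing the two inclusions separately.

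For $\Psi\join\tilde\Psi\le\hat\Phi$: since $\Psi\le\hat\Phi$ holds by construction, it suffices to verify that $\tilde\Psi\le\hat\Phi$, which in turn reduces to checking that each generator $j\in\tilde\Gamma$ of $\tilde\Psi$ satisfies $j\equiv j_*$ modulo $\hat\Phi$. Split into cases according to whether $j$ is contracted by $\Psi$. If $j\in I$, then $j\equiv j_*$ modulo $\Psi$, hence modulo $\hat\Phi$. If $j\in\tilde\Gamma\setminus I$, then $[j]_\Psi$ is by definition one of the generators of $\Phi$, so $[j]_\Psi\equiv[j_*]_\Psi$ modulo $\Phi$, and unwinding the definition of $\hat\Phi$ gives $j\equiv j_*$ modulo $\hat\Phi$.

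For the reverse inclusion $\hat\Phi\le\Psi\join\tilde\Psi$: by Theorem~\ref{3 isom} this is equivalent to $\Phi\le(\Psi\join\tilde\Psi)/\Psi$, so it suffices to show that each generator $[j]_\Psi$ of $\Phi$ (for $j\in\tilde\Gamma\setminus I$) is contracted by $(\Psi\join\tilde\Psi)/\Psi$. This is immediate: $j\in\tilde\Gamma$ gives $j\equiv j_*$ modulo $\tilde\Psi$, hence modulo $\Psi\join\tilde\Psi$, which translates back to $[j]_\Psi\equiv[j_*]_\Psi$ modulo $(\Psi\join\tilde\Psi)/\Psi$.

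There is no substantive obstacle here; the main thing to handle carefully is the bookkeeping of the bijection between congruences on $L/\Psi$ and congruences on $L$ containing $\Psi$, together with the minor point that for $j\in\tilde\Gamma\setminus I$ the element $[j]_\Psi$ really is a join-irreducible of $L/\Psi$ with unique lower cover $[j_*]_\Psi$ (a standard consequence of the fact that $L/\Psi$ is isomorphic to the subposet of bottom elements and $j$ is not $\Psi$-collapsed with $j_*$). Once these ingredients are in place, the proof is a direct application of Theorem~\ref{3 isom}.
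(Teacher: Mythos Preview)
Your argument is correct. It differs from the paper's in flavor: the paper works directly inside $\Irr(\Con(L))$, using the identification of join-irreducible elements of $L$ with join-irreducible congruences (the congruence-uniformity setup introduced earlier for the weak order). It shows that the join-irreducibles of $L/\Psi$ contracted by $(\Psi\join\tilde\Psi)/\Psi$ are exactly $\{[j]_\Psi:j\in\tilde I\setminus I\}$, and then observes that in the poset $\Irr(\Con(L))\setminus I\cong\Irr(\Con(L/\Psi))$ this set is the order ideal generated by $\tilde\Gamma\setminus I$. Your proof bypasses this structural picture entirely: you pull everything back to $L$ via Theorem~\ref{3 isom} and verify the two inclusions $\hat\Phi\ge\Psi\join\tilde\Psi$ and $\hat\Phi\le\Psi\join\tilde\Psi$ using nothing more than the universal property of a generated congruence. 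The upshot is that your argument is somewhat more elementary---it does not depend on the bijection between $\Irr(L)$ and $\Irr(\Con(L))$, so it goes through for an arbitrary finite lattice---whereas the paper's argument is more explicit about exactly which join-irreducibles get contracted. The one point you flag at the end, that $[j]_\Psi$ has unique lower cover $[j_*]_\Psi$ when $j\notin I$, is genuinely needed and is easily checked (since $j$ is the bottom of its $\Psi$-class and any bottom element strictly below $j$ lies below~$j_*$).
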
 
\begin{proof}
Identifying join-irreducible elements of $L$ with join-irreducible congruences as before, the sets $I$ and $\tilde{I}$ are the ideals that $\Gamma$ and $\tilde\Gamma$ generate in $\Irr(\Con(L))$.
The join-irreducible elements in the quotient $L/\Psi$ are exactly the elements $[j]_{\Psi}$ where $j\in(\Irr(L)\setminus I)$.
Since the set of join-irreducible elements contracted by $\Psi\join\tilde\Psi$ is $I\cup \tilde{I}$, each $[j]_{\Psi}$ is contracted by the congruence $(\Psi\join\tilde\Psi)/\Psi$ if and only if $j$ is in~$\tilde{I}$.
Furthermore, a join-irreducible element $j\in(\Irr(L)\setminus I)$ is in $\tilde{I}$ if and only if it is below some element of $\tilde\Gamma\setminus I$.
We see that $(\Psi\join\tilde\Psi)/\Psi$ is the congruence on $L/\Psi$ generated by the set $\set{[j]_{\Psi}:j\in \tilde\Gamma\setminus I}$.
\end{proof}

We now prove our key theorem.

\begin{proof}[Proof of Theorem~\ref{key camb}]
The quotient $W/\Psi$ is isomorphic to $W'$.
Let $I$ be the set of join-irreducible elements contracted by $\Psi$.
Let $\tilde\Gamma$ be the generating set that was used to define the Cambrian congruence $\Theta_c$.
By Lemma~\ref{Psi and tilde gen}, the set $\set{[j]_{\Psi}:j\in \tilde\Gamma\setminus I}$ generates the congruence $(\Psi\join\Theta_c)/\Psi$ on $W/\Psi\cong W'$.
This congruence corresponds to the Cambrian congruence $\Theta_{c'}$ on $W'$, so $\Camb(W',c')$ is isomorphic to $(W/\Psi)/[(\Psi\join\Theta_c)/\Psi]$, which, by Theorem~\ref{3 isom}, is isomorphic to $W/(\Phi\join\Theta_c)$.

Also by Theorem~\ref{3 isom}, $W/(\Phi\join\Theta_c)$ is isomorphic to $(W/\Theta_c)/[(\Phi\join\Theta_c)/\Theta_c]$, and Lemma~\ref{Psi and tilde gen} says that $(\Phi\join\Theta_c)/\Theta_c$ is generated by $\set{[j]_{\Theta_c}:j\in(\Gamma\cap\Camb(W,c))}$.
In particular, there is a surjective homomorphism from $W/\Theta_c$ to $W/(\Psi\join\Theta_c)$ whose associated congruence is generated by $\set{[j]_{\Theta_c}:j\in(\Gamma\cap\Camb(W,c))}$.
Since $x\mapsto[x]_{\Theta_c}$ is an isomorphism from $\Camb(W,c)$ to $W/\Theta_c$, and since $W/(\Psi\join\Theta_c)$ is isomorphic to $\Camb(W',c')$, we conclude that there is a surjective homomorphism from $\Camb(W,c)$ to $\Camb(W',c')$ whose associated congruence is generated by $\Gamma\cap\Camb(W,c)$.
We will show that this homomorphism is the restriction of $\eta$.

If $x\in\Camb(W',c')$, then $x$ is the bottom element of $[x]_{\Theta_{c'}}$.
Thus $\eta^{-1}(x)$ is a $\Psi$-class in $W$ containing the bottom element $y$ of a $(\Psi\join\Theta_c)$-class.
In particular, $y$ is the bottom element of $[y]_{\Theta_c}$, or in other words $y\in\Camb(W,c)$.
Since $\eta(y)=x$, we have shown that $\Camb(W',c')\subseteq\eta(\Camb(W,c))$.

Since $\Camb(W,c)$ is a sublattice of $W$ and $\eta$ is a homomorphism, $\eta(\Camb(W,c))$ is a sublattice of $W'$, and the restriction of $\eta$ to $\Camb(W,c)$ is a lattice homomorphism from $\Camb(W,c)$ to $\eta(\Camb(W,c))$.
Let $j\in\Gamma\cap\Camb(W,c)$.
Then because $j\in\Gamma$, $\eta$ contracts $j$, or in other words $\eta(j)=\eta(j_*)$, where $j_*$ is the unique element of $W$ covered by $j$.
By Lemma~\ref{sort j*}, the element $j_*$ is also in $\Camb(W,c)$, so $j_*$ is also the unique element of $\Camb(W,c)$ covered by $j$.
Thus the restriction of $\eta$ to $\Camb(W,c)$ also contracts $j$ in $\Camb(W,c)$.
Since there exists a surjective lattice homomorphism from $\Camb(W,c)$ to $\Camb(W',c')$ whose associated congruence is generated by $\Gamma\cap\Camb(W,c)$, the image of the restriction of $\eta$ can be no larger than $\Camb(W',c')$, and we conclude that $\eta(\Camb(W,c))=\Camb(W,c)$.
We have shown that the restriction of $\eta$ is a surjective homomorphism from $\Camb(W,c)$ to $\Camb(W',c')$ whose associated congruence is generated by $\Gamma\cap\Camb(W,c)$.
\end{proof}

With Theorem~\ref{key camb} in hand, we prove several facts that together constitute a detailed classification of surjective homomorphisms between Cambrian lattices.

An \newword{oriented Coxeter diagram} is a directed graph (with labels on some edges) defined by choosing an orientation of each edge in a Coxeter diagram.
Orientations of the Coxeter diagram of a finite Coxeter group $W$ are in bijection with Coxeter elements of $W$.
There is disagreement in the literature about the convention for this bijection.
The definition of Cambrian lattices given here agrees with the definition in~\cite{cambrian} if we take the following convention, which also agrees with the convention of \cite{sortable,sort_camb}:
To obtain a Coxeter element from an oriented diagram, we require, for each directed edge $s\to t$, that $s$ precedes $t$ in every expression for $c$.
However, the opposite convention is also common.

An \newword{oriented diagram homomorphism} starts with the oriented Coxeter diagram encoding a Coxeter system $(W,S)$ and a choice of Coxeter element $c$, then deletes vertices, decreases labels on directed edges, and/or erases directed edges, and relabels the vertices to obtain the oriented Coxeter diagram of some Coxeter system $(W',S')$ and choice $c'$ of Coxeter element.
We prove the following result and several more detailed results.

\begin{theorem}\label{main camb}
Given a finite Coxeter system $(W,S)$ with a choice $c$ of Coxeter element and another Coxeter system $(W',S')$ with a choice $c'$ of Coxeter element, there exists a surjective lattice homomorphism from $\Camb(W,c)$ to $\Camb(W',c')$ if and only if there exists an oriented diagram homomorphism from the oriented diagram for $(W,S)$ and $c$ to the oriented diagram for $(W',S')$ and $c'$.
\end{theorem}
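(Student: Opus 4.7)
The plan is to translate the classification of surjective homomorphisms between weak orders (Theorem~\ref{main}) into a classification for Cambrian lattices, using Theorem~\ref{key camb} as the bridge.

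For the forward implication (an oriented diagram homomorphism yields a Cambrian homomorphism), I would first forget the orientation and apply Theorem~\ref{main} to obtain a surjective lattice homomorphism $\eta:W\to W'$. By Theorem~\ref{para factor}, $\eta$ factors as a parabolic homomorphism $\eta_J$ (deleting the vertices $S\setminus J$) followed by a compressive homomorphism $\bar\eta:W_J\to W'$ fixing $S'$ pointwise. Because $\bar\eta$ fixes $S'$, any reduced word $s_1\cdots s_k$ for the restricted Coxeter element $c|_J\in W_J$ (obtained from a reduced word for $c$ by deleting the letters in $S\setminus J$) maps letter-by-letter to the same word read in $W'$; by the definition of an oriented diagram homomorphism, this word represents $c'$ with the required orientation. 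Applying Theorem~\ref{key camb} once to $\eta_J$ (passing from $\Camb(W,c)$ to $\Camb(W_J,c|_J)$) and once to $\bar\eta$ (passing from $\Camb(W_J,c|_J)$ to $\Camb(W',c')$), and composing, yields the desired surjective lattice homomorphism.

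For the converse, given a surjective lattice homomorphism $\phi:\Camb(W,c)\to\Camb(W',c')$, I would extract the oriented diagram homomorphism in three steps. First, a Cambrian analog of Proposition~\ref{basic facts}: since every $s\in S$ is $c$-sortable, $S$ is precisely the set of atoms of $\Camb(W,c)$, and $\phi$ must send each atom either to $1'$ or to an atom of $\Camb(W',c')$, producing a vertex map $S\to S'\cup\{1'\}$. Second, whenever $\phi(r),\phi(s)\neq 1'$, use the fact from~\cite{sort_camb} that the $c$-sortable elements lying in $W_{\{r,s\}}$ coincide with the $c|_{\{r,s\}}$-sortable elements, so that $\Camb(W_{\{r,s\}},c|_{\{r,s\}})$ embeds as a sublattice of $\Camb(W,c)$; then $\phi$ restricts to a surjective homomorphism between these rank-two Cambrian sublattices. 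Third, direct inspection of $\Camb(I_2(m),rs)$ shows that it consists of a long chain $1\covered r\covered rs\covered rsr\covered\cdots$ together with the lone element $s$ covering $1$ and covered by $w_0$; this asymmetry forces both $m'(\phi(r),\phi(s))\le m(r,s)$ and agreement of the orientation on the edge $\{r,s\}$ induced by $c$ with the orientation on $\{\phi(r),\phi(s)\}$ induced by $c'$. Taken together these conditions assemble into an oriented diagram homomorphism.

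The main obstacle is the orientation-preservation step in the converse direction: one must rule out any surjective homomorphism between rank-two Cambrian lattices that reverses the roles of the two simple generators. This reduces to a careful analysis of the asymmetric dihedral Cambrian lattice, where the ``chain side'' and the ``singleton side'' are distinguished by very different local structure in the Hasse diagram, which forces $\phi$ to send the chain side to the chain side. Everything else is bookkeeping modeled directly on the weak-order case.
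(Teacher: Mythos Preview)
Your proposal is correct and follows essentially the same approach as the paper. The paper distributes the argument across several auxiliary results (Theorem~\ref{camb para factor}, Proposition~\ref{oriented}, Theorem~\ref{camb exist unique}, Proposition~\ref{camb basic facts}, Theorem~\ref{camb para cong}) rather than giving a single direct proof, but the content is the same: the forward direction applies Theorem~\ref{key camb} to the weak-order homomorphisms supplied by Theorem~\ref{main} (separately for the parabolic and compressive factors, just as you do), and the converse is a Cambrian analogue of Proposition~\ref{basic facts}, with the orientation-preservation step reducing to the rank-two restriction and the evident asymmetry of the dihedral Cambrian lattice.
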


The proof of Theorem~\ref{main camb} begins with a factorization result analogous to Theorem~\ref{para factor}.
Given $J\subseteq S$ and a Coxeter element $c$ of $W$, the \newword{restriction of $c$ to $W_J$} is the Coxeter element $\tilde c$ of $W_J$ obtained by deleting the letters in $S\setminus J$ from a reduced word for $c$.
(Typically, the restriction is not equal to $c_J$.)
Recall from Section~\ref{delete vert sec} the definition of the parabolic homomorphism $\eta_J$.
Recall from Section~\ref{intro} that a compressive homomorphism is a surjective homomorphism that restricts to a bijection between sets of atoms.
For Cambrian lattices, this is a surjective lattice homomorphism $\Camb(W,c)\to\Camb(W',c')$ that restricts to a bijection between $S$ and $S'$.

\begin{theorem}\label{camb para factor}
Suppose $(W,S)$ and $(W',S')$ are finite Coxeter systems and $c$ and $c'$ are Coxeter elements of $W$ and $W'$. 
Suppose $\eta:\Camb(W,c)\to\Camb(W',c')$ is a surjective lattice homomorphism and let $J\subseteq S$ be $\set{s\in S:\eta(s)\neq 1'}$.
Then $\eta$ factors as $\eta|_{\Camb(W_J,\tilde{c})}\circ(\eta_J)|_{\Camb(W,c)}$, where $\tilde{c}$ is the restriction of $c$ to $W_J$.
The map $\eta|_{\Camb(W_J,\tilde{c})}$ is a compressive homomorphism.
\end{theorem}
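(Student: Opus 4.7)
The plan is to mirror the proof of Theorem~\ref{para factor}, transporting the parabolic machinery from the weak order to the Cambrian setting via Theorem~\ref{key camb}. The first step is to establish a Cambrian analog of Proposition~\ref{basic facts}. Since $\Camb(W,c)$ is a sublattice of $W$ by Theorem~\ref{sort_camb thm}, its minimum is $1$, its maximum is $w_0=\Join S$, and its atoms are exactly $S$ (as $S\subseteq\Camb(W,c)$ and the cover relation is inherited); the same holds for $\Camb(W',c')$. The argument of Proposition~\ref{basic facts} uses only these structural features, so its conclusions carry over verbatim: $\eta(1)=1'$, $\eta(w_0)=w_0'$, $S'\subseteq\eta(S)\subseteq S'\cup\set{1'}$, and distinct $s,t\in S$ with $\eta(s)=\eta(t)$ must both map to $1'$. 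In particular, $\eta$ restricts to a bijection $J\to S'$.

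Next I apply Theorem~\ref{key camb} to the parabolic homomorphism $\eta_J\colon W\to W_J$, whose associated congruence on $W$ is generated by the join-irreducible set $\Gamma=S\setminus J$ by Theorem~\ref{para cong ji}. If $c=s_1s_2\cdots s_n$, then the Coxeter element produced by Theorem~\ref{key camb} is $\eta_J(s_1)\cdots\eta_J(s_n)$, which is exactly the restriction $\tilde c$ of $c$ to $W_J$. Because every simple reflection is $c$-sortable, $\Gamma\cap\Camb(W,c)=S\setminus J$, and so $(\eta_J)|_{\Camb(W,c)}$ is a surjective lattice homomorphism $\Camb(W,c)\to\Camb(W_J,\tilde c)$ whose associated congruence $\Xi_J$ is the finest congruence on $\Camb(W,c)$ contracting each $s\in S\setminus J$.

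Now let $\Xi$ denote the congruence on $\Camb(W,c)$ induced by $\eta$. Since $\eta(s)=1'=\eta(1)$ for every $s\in S\setminus J$, $\Xi$ also contracts each such $s$, so $\Xi_J$ refines $\Xi$. Hence $\eta$ factors uniquely as $\eta'\circ(\eta_J)|_{\Camb(W,c)}$ for some surjective lattice homomorphism $\eta'\colon\Camb(W_J,\tilde c)\to\Camb(W',c')$. To identify $\eta'$ with $\eta|_{\Camb(W_J,\tilde c)}$, the plan is to invoke the parabolic reduction principle for sortable elements: for $w\in W_J$, $w$ is $c$-sortable if and only if it is $\tilde c$-sortable. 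This yields the containment $\Camb(W_J,\tilde c)\subseteq\Camb(W,c)$, and then for any $v$ in this subset the parabolic factor $v_J$ equals $v$, so $\eta'(v)=\eta'(\eta_J(v))=\eta(v)$. Finally, $\eta|_{\Camb(W_J,\tilde c)}$ is compressive because its restriction to the atom set $J$ of $\Camb(W_J,\tilde c)$ is exactly the bijection $J\to S'$ obtained at the start.

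The main obstacle is the parabolic-reduction statement that $c$-sortability and $\tilde c$-sortability agree on $W_J$; this is a standard fact about sortable elements, but the clean way to see it requires careful bookkeeping with the half-infinite words $c^\infty$ and $\tilde c^\infty$, namely showing that dropping the letters of $S\setminus J$ from the $(s_1\cdots s_n)$-sorting word of $w\in W_J$ produces the sorting word with respect to $\tilde c$, and that the weakly-decreasing condition on divider-blocks transfers correctly between the two divider placements.
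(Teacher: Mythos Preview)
Your proposal is correct and follows essentially the same approach as the paper. The paper's proof is a one-sentence remark that the argument for Theorem~\ref{para factor} can be repeated verbatim, replacing Proposition~\ref{basic facts} with its Cambrian analog (Proposition~\ref{camb basic facts}) and Theorem~\ref{para cong} with Theorem~\ref{camb para cong} (which is itself derived from Theorem~\ref{key camb} exactly as you do); you have unpacked precisely this, and you are in fact more careful than the paper in isolating the parabolic-reduction fact $\Camb(W_J,\tilde c)=\Camb(W,c)\cap W_J$, which the paper uses implicitly (it is standard, appearing for instance in \cite{sort_camb}).
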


As was the case for the weak order, the task is to understand compressive homomorphisms between Cambrian lattices.
The following proposition is proved below as a special case of part of Proposition~\ref{camb basic facts}.
Together with Theorem~\ref{camb para factor}, it implies that every surjective lattice homomorphism between Cambrian lattices determines an oriented diagram homomorphism.
\begin{proposition}\label{oriented}
Suppose $\eta:\Camb(W,c)\to\Camb(W',c')$ is a compressive homomorphism.
Then $m'(\eta(r),\eta(s))\le m(r,s)$ for each pair $r,s\in S$.
Also, if $s_1\cdots s_n$ is a reduced word for $c$, then $c'=\eta(s_1)\cdots\eta(s_n)$.
\end{proposition}

As we did for the weak order, in studying compressive homomorphisms between Cambrian lattices, we may as well take $S'=S$ and let $\eta$ fix each element of $S$.
We will prove the following characterization of compressive homomorphisms.

\begin{theorem}\label{camb exist unique}
Suppose $(W,S)$ and $(W',S)$ are finite Coxeter systems and suppose that $m'(r,s)\le m(r,s)$ for each pair $r,s\in S$.
Given a Coxeter element $c$ of $W$ with reduced word $s_1\cdots s_n$, let $c'$ be the element of $W'$ with reduced word $s_1\cdots s_n$.
Let $\tilde\Gamma$ be a set of join-irreducible elements obtained by choosing, for each $r,s\in S$ with $m'(r,s)<m(r,s)$ such that $r$ precedes $s$ in $c$, exactly $m(r,s)-m(r,s)$ join-irreducible elements in $\set{\alt_k(r,s):k=2,3,\ldots,m(r,s)-1}$.
\begin{enumerate}
\item \label{exist unique tildeGamma}
There exists a unique homomorphism $\eta:\Camb(W,c)\to\Camb(W',c')$ that is compressive and fixes $S$ pointwise and whose associated congruence $\Theta$ contracts all of elements of~$\tilde\Gamma$.
\item \label{generated}
$\Theta$ is generated by $\tilde\Gamma$.
\item \label{every arises}
Every compressive homomorphism from $\Camb(W,c)$ to $\Camb(W',c')$ that fixes $S$ pointwise arises in this manner, for some choice of $\tilde\Gamma$.
\end{enumerate}
\end{theorem}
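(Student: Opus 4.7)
The strategy is to transfer the weak-order classification to the Cambrian setting via Theorem~\ref{key camb}, using the fact that for a pair $r,s$ with $r$ preceding $s$ in $c$, the elements $\alt_k(r,s)$ (for $k=2,\ldots,m(r,s)-1$) are $c$-sortable and hence descend to join-irreducibles of $\Camb(W,c)$, whereas the elements $\alt_k(s,r)$ are contracted by $\Theta_c$ and disappear from $\Camb(W,c)$.

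For existence in part~(\ref{exist unique tildeGamma}), I would extend $\tilde\Gamma$ to a set $\Gamma$ of join-irreducibles of $W$ by appending, for each pair $\set{r,s}$ with $r$ preceding $s$ in $c$ and $m'(r,s)<m(r,s)$, an additional $m(r,s)-m'(r,s)$ join-irreducibles drawn from $\set{\alt_k(s,r):k=2,\ldots,m(r,s)-1}$. By the dihedral analysis of Section~\ref{dihedral sec}, this data prescribes the rank-two restrictions of a compressive homomorphism $\hat\eta\colon W\to W'$ fixing $S$ pointwise; its existence is supplied by Theorem~\ref{existence uniqueness crys} in the crystallographic case and by Theorems~\ref{H3 A3 thm}, \ref{H4 A4 thm}, \ref{H3 B3 thm}, \ref{H4 B4 thm} in the remaining exceptional cases. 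In the $H_3\to B_3$ and $H_4\to B_4$ cases, the one forbidden pair of dihedral restrictions from Theorem~\ref{H3 B3 thm} can always be avoided since $\tilde\Gamma$ constrains only one side of each dihedral. Applying Theorem~\ref{key camb} to $\hat\eta$, and noting that $\hat\eta(s_1)\cdots\hat\eta(s_n)=c'$ since $\hat\eta$ fixes $S$, the restriction of $\hat\eta$ to $\Camb(W,c)$ is a compressive homomorphism $\eta\colon\Camb(W,c)\to\Camb(W',c')$ whose associated congruence is generated by $\Gamma\cap\Camb(W,c)=\tilde\Gamma$; this also proves part~(\ref{generated}).

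For uniqueness in part~(\ref{exist unique tildeGamma}), suppose $\eta'$ is any compressive homomorphism fixing $S$ pointwise whose congruence $\Theta'$ contracts $\tilde\Gamma$. Since $\Theta'$ contracts every element of $\tilde\Gamma$, the congruence $\Theta$ generated by $\tilde\Gamma$ refines $\Theta'$; yet the quotients $\Camb(W,c)/\Theta$ and $\Camb(W,c)/\Theta'$ are both isomorphic to $\Camb(W',c')$ and hence have equal cardinality, forcing $\Theta'=\Theta$. The induced lattice isomorphism $\Camb(W,c)/\Theta\to\Camb(W',c')$ is then pinned down by its action on the atoms $S$, yielding $\eta=\eta'$. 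For part~(\ref{every arises}), I would start from an arbitrary compressive $\eta$ fixing $S$, restrict it to each rank-two sub-Cambrian lattice $\Camb(W_{\set{r,s}},\tilde c)$ (which, by the Cambrian analog of Proposition~\ref{diagram facts}, is itself a compressive homomorphism to $\Camb(W'_{\set{r,s}},\tilde c')$), and observe that comparing the Catalan-sized orders $m(r,s)+2$ and $m'(r,s)+2$, together with the explicit description of the non-atom $c$-sortable join-irreducibles in $W_{\set{r,s}}$ as $\alt_k(r,s)$ for $k=2,\ldots,m(r,s)-1$, forces $\Theta$ to contract exactly $m(r,s)-m'(r,s)$ such join-irreducibles. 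Collecting these across all pairs recovers a set $\tilde\Gamma$ of the prescribed shape, and parts~(\ref{exist unique tildeGamma}) and~(\ref{generated}) then identify $\eta$ with the homomorphism built from $\tilde\Gamma$.

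The main obstacle is securing existence in the non-crystallographic exceptional types, where the weak-order classification admits a forbidden combination of rank-two restrictions; fortunately, since $\tilde\Gamma$ only constrains one side of each dihedral, the opposite side can always be chosen to avoid the forbidden pair, so the Cambrian existence statement remains uniform. A secondary technicality is the claim that a lattice automorphism of $\Camb(W',c')$ fixing $S$ pointwise is necessarily trivial, which can be verified by applying the same rank-two uniqueness argument on the target Cambrian lattice.
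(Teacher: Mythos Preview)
Your overall architecture matches the paper's: lift $\tilde\Gamma$ to a weak-order homomorphism $\hat\eta:W\to W'$ via the classification, apply Theorem~\ref{key camb} to get the restricted map $\eta:\Camb(W,c)\to\Camb(W',c')$, and deduce uniqueness and part~(\ref{every arises}) from cardinality and rank-two restrictions exactly as the paper does. So existence in the crystallographic and $H_k\to A_k$ cases, and the uniqueness and ``every arises'' arguments, are fine.

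The gap is in your treatment of the $H_3\to B_3$ and $H_4\to B_4$ cases. You correctly note that the forbidden dihedral pair $(qr,\,rqrq)$ from Theorem~\ref{H3 B3 thm} can be avoided by choosing the unconstrained side, but that is not the real obstruction. The difficulty is that Theorem~\ref{key camb} requires $\Gamma$ to \emph{generate} the weak-order congruence $\Psi$ of $\hat\eta$, and in the $H_3\to B_3$ classification only \emph{one} of the eight homomorphisms (the one contracting $qrq$ and $rqr$) is homogeneous of degree~$2$. For every other choice---in particular for $\tilde\Gamma=\{qr\}$, $\{qrqr\}$, $\{rq\}$, or $\{rqrq\}$---every allowed $\hat\eta$ has a generating set $\Gamma$ that necessarily contains degree-$3$ join-irreducibles such as $qrqsrqrs$ or $rqsrqrs$. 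Your equation $\Gamma\cap\Camb(W,c)=\tilde\Gamma$ is then unjustified: you would need to know that these extra degree-$3$ generators are not $c$-sortable, and that verification (four cases, checking specific sorting words) is precisely the content of Cases~1--4 in the paper's proof. Without it, Theorem~\ref{key camb} only tells you that the Cambrian congruence is generated by $\tilde\Gamma$ together with whatever degree-$3$ generators of $\Psi$ happen to be $c$-sortable, which a priori could be too much.
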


Theorem~\ref{key camb} implies that, given any compressive homomorphism $\tilde\eta:W\to W'$, there is a choice of $\tilde\Gamma$ in Theorem~\ref{camb exist unique} such that every element of $\tilde\Gamma$ is contracted by the congruence associated to $\tilde\eta$.
The homomorphism $\eta$ thus arising from Theorem~\ref{camb exist unique} is the restriction of $\tilde\eta$.
The following theorem is a form of converse to these statements.

\begin{theorem}\label{camb diagram}
Suppose $(W,S)$ and $(W',S)$ are finite Coxeter systems and suppose that $\eta:\Camb(W,c)\to\Camb(W',c')$ is an compressive homomorphism.
\begin{enumerate}
\item \label{exists diagram}
There exists a compressive homomorphism from $W$ to $W'$ whose restriction to $\Camb(W,c)$ is $\eta$.
\item \label{any diagram}
Let $\tilde\Gamma$ be the set of join-irreducible elements that generates the congruence associated to $\eta$.
Given a compressive homomorphism $\tilde\eta:W\to W'$ such that all elements of $\tilde\Gamma$ are contracted by the congruence associated to $\tilde\eta$, the restriction of $\tilde\eta$ to $\Camb(W,c)$ is $\eta$.
\end{enumerate}
\end{theorem}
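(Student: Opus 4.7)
The plan is to reduce both parts to Theorem~\ref{key camb} combined with the uniqueness in Theorem~\ref{camb exist unique}(\ref{exist unique tildeGamma}). I address Part~(\ref{any diagram}) first, since it is easier and will be used in proving Part~(\ref{exists diagram}).

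Given $\tilde\eta:W\to W'$ as in Part~(\ref{any diagram}), let $\Gamma$ be the set of all join-irreducibles of $W$ contracted by its associated congruence, so $\Gamma$ generates that congruence and $\tilde\Gamma\subseteq\Gamma$. Since $\tilde\eta$ fixes $S$ pointwise, $\tilde\eta(s_1)\cdots\tilde\eta(s_n)=c'$, and Theorem~\ref{key camb} makes $\tilde\eta|_{\Camb(W,c)}$ a surjective lattice homomorphism onto $\Camb(W',c')$; it is compressive and fixes $S$ pointwise because $S\subseteq\Camb(W,c)$. Every element of $\tilde\Gamma$ lies in $\Camb(W,c)$ by construction, and Lemma~\ref{sort j*} ensures that the contraction of each such $j$ persists under restriction to $\Camb(W,c)$. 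Thus $\tilde\eta|_{\Camb(W,c)}$ is a compressive homomorphism fixing $S$ pointwise whose congruence contracts $\tilde\Gamma$, and the uniqueness assertion in Theorem~\ref{camb exist unique}(\ref{exist unique tildeGamma}) forces $\tilde\eta|_{\Camb(W,c)}=\eta$.

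For Part~(\ref{exists diagram}), I construct $\tilde\eta:W\to W'$ by specifying its rank-two restrictions. By Theorem~\ref{camb exist unique}(\ref{every arises}), $\eta$ arises from a specific $\tilde\Gamma$ for which, for each pair $\{r,s\}\subseteq S$ with $r$ preceding $s$ in $c$ and $m'(r,s)<m(r,s)$, the intersection $\tilde\Gamma\cap W_{\{r,s\}}$ consists of $m(r,s)-m'(r,s)$ chosen elements of $\{\alt_k(r,s):2\le k\le m(r,s)-1\}$. I extend $\tilde\Gamma$ to a set $\hat\Gamma$ by adjoining, for each such pair, the same number of elements from the opposite chain $\{\alt_k(s,r):2\le k\le m(r,s)-1\}$; this prescribes a candidate rank-two restriction $\eta_{\{r,s\}}:W_{\{r,s\}}\to W'_{\{r,s\}}$ for each pair. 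The existence portion of the classification of compressive homomorphisms between weak orders (Theorem~\ref{existence uniqueness crys} in the crystallographic case, together with Theorems~\ref{H3 A3 thm}, \ref{H3 B3 thm}, \ref{H4 A4 thm}, and \ref{H4 B4 thm} in the remaining cases) then produces a compressive homomorphism $\tilde\eta:W\to W'$ fixing $S$ pointwise having these rank-two restrictions, and applying Part~(\ref{any diagram}) to $\tilde\eta$ gives $\tilde\eta|_{\Camb(W,c)}=\eta$. The main obstacle is that Theorems~\ref{H3 B3 thm} and~\ref{H4 B4 thm} rule out any compressive homomorphism to $B_3$ or $B_4$ whose rank-two restriction contains both $qr$ and $rqrq$; but since $\tilde\Gamma$ constrains only one side of each dihedral parabolic while the forbidden pair involves elements of both sides, I retain enough freedom on the unconstrained chain to avoid the bad combination, so the extension $\hat\Gamma$ (and hence $\tilde\eta$) always exists.
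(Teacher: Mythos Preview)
Your argument is correct. For Part~(\ref{any diagram}) it is essentially the paper's argument: apply Theorem~\ref{key camb} to $\tilde\eta$, observe that the restriction still contracts each element of $\tilde\Gamma$ (the paper leaves Lemma~\ref{sort j*} implicit here), and invoke the uniqueness in Theorem~\ref{camb exist unique}(\ref{exist unique tildeGamma}).

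For Part~(\ref{exists diagram}) your route genuinely differs from the paper's. The paper constructs $\tilde\eta$ together with a generating set $\Gamma$ satisfying the \emph{exact} equality $\Gamma\cap\Camb(W,c)=\tilde\Gamma$; in the $(H_3,B_3)$ and $(H_4,B_4)$ cases this forces it to use non-homogeneous homomorphisms and to verify, by explicit sorting-word computations (four cases), that the extra degree-$3$ generators are not $c$-sortable. You instead require only the containment $\tilde\Gamma\subseteq\Gamma$, produce $\tilde\eta$ by freely choosing the unconstrained side of each dihedral so as to avoid the single forbidden pair $\{qr,rqrq\}$, and then bootstrap through Part~(\ref{any diagram}). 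This is cleaner and sidesteps the sortability checks entirely. The tradeoff is that you must take Theorem~\ref{camb exist unique} as already established, whereas in the paper Theorems~\ref{camb exist unique} and~\ref{camb diagram} are proved simultaneously, with the sortability checks doing the work that secures Theorem~\ref{camb exist unique}(\ref{exist unique tildeGamma})--(\ref{generated}) in the first place. One small omission: you should note (as the paper does) that Theorem~\ref{edge factor} reduces the existence of $\tilde\eta$ to the irreducible case before invoking the type-by-type classification theorems.
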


We begin our proof of these classification results by pointing out some basic facts on surjective homomorphisms between Cambrian lattices, in analogy with Proposition~\ref{basic facts}.
Proposition~\ref{oriented} is a special case of (5) and (6) in the following proposition.

\begin{proposition}\label{camb basic facts}
Let $\eta:\Camb(W,c)\to \Camb(W',c')$ be a surjective lattice homomorphism.
Then
\begin{enumerate}
\item $\eta(1)=1'$.
\item $S'\subseteq\eta(S)\subseteq (S'\cup\set{1'})$.
\item If $r$ and $s$ are distinct elements of $S$ with $\eta(r)=\eta(s)$, then $\eta(r)=\eta(s)=1'$.
\item \label{restrict camb}
If $J\subseteq S$, then $\eta$ restricts to a surjective homomorphism from $\Camb(W_J,\tilde{c})$  to $\Camb(W'_{\eta(J)\setminus\set{1'}},\tilde{c}')$, where $\tilde{c}$ is the restriction of $c$ to $W_J$ and $\tilde{c}'$ is the restriction of $c'$ to $W'_{\eta(J)\setminus\set{1'}}$.
\item $m'(\eta(r),\eta(s))\le m(r,s)$ for each pair $r,s\in S$ with $\eta(r)\neq1'$ and $\eta(s)\neq1'$.
\item \label{c' eta}
If $s_1s_2\cdots s_n$ is a reduced word for $c$, then $c'=\eta(s_1)\eta(s_2)\cdots\eta(s_n)$.
\end{enumerate}
\end{proposition}
\begin{proof}
The proof of Proposition~\ref{basic facts} can be repeated verbatim to prove all of the assertions except~\eqref{c' eta}.
The latter is equivalent to the statement that $\eta(r)$ precedes $\eta(s)$ if and only if $r$ precedes $s$, whenever $r,s\in S$ have $\eta(r)\neq1'$ and $\eta(s)\neq1'$.
This statement follows immediately from~\eqref{restrict camb} with $J=\set{r,s}$.
\end{proof}

Taking $\eta=\eta_J$ and $\Gamma=S\setminus J$ in Theorem~\ref{key camb}, we have the following analog of Theorem~\ref{para cong} for Cambrian lattices:

\begin{theorem}\label{camb para cong}
Let~$c$ be a Coxeter element of~$W,$ let $J\subseteq S$ and let~$\tilde c$ be the Coxeter element of~$W_J$ obtained by restriction.
Then $\eta_J$ restricts to a surjective lattice homomorphism from $\Camb(W,c)$ to $\Camb(W_J,\tilde c)$.
The associated lattice congruence on $\Camb(W,c)$ is generated by the set $S\setminus J$ of join-irreducible elements.
\end{theorem}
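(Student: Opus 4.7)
The plan is to derive Theorem~\ref{camb para cong} as an essentially immediate application of Theorem~\ref{key camb}, feeding in the parabolic homomorphism $\eta_J:W\to W_J$ with generating set $\Gamma=S\setminus J$. By Theorem~\ref{para cong} (equivalently Theorem~\ref{para cong ji}), $\eta_J$ is a surjective lattice homomorphism whose associated congruence is the finest congruence with $1\equiv s$ for all $s\in S\setminus J$, and hence is generated by $S\setminus J$ as a set of join-irreducible elements. This sets up the hypotheses of Theorem~\ref{key camb} with $W'=W_J$ and $\Psi$ the kernel of $\eta_J$.

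Next I would check that the Coxeter element $c'$ produced by Theorem~\ref{key camb} is precisely the restriction $\tilde c$. If $c=s_1s_2\cdots s_n$ is a reduced expression, then Theorem~\ref{key camb} defines $c'=\eta_J(s_1)\eta_J(s_2)\cdots\eta_J(s_n)$. Since $\eta_J$ fixes every element of $J$ and sends every element of $S\setminus J$ to the identity, this product is obtained from $s_1\cdots s_n$ by deleting the letters not in $J$, which is exactly the definition of the restriction $\tilde c$. So Theorem~\ref{key camb} provides a surjective homomorphism $\Camb(W,c)\to\Camb(W_J,\tilde c)$, realized as the restriction of $\eta_J$, with associated congruence generated by $\Gamma\cap\Camb(W,c)=(S\setminus J)\cap\Camb(W,c)$.

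The only remaining point is to show $(S\setminus J)\cap\Camb(W,c)=S\setminus J$, i.e.\ that every simple generator is $c$-sortable. This is immediate from the definition of $c$-sortability: for any $s\in S$, the $(s_1\cdots s_n)$-sorting word for $s$ is the single letter $s$, giving the one-term sequence of subsets $(\{s\})$, which is trivially weakly decreasing. Hence every $s\in S$ lies in $\Camb(W,c)$, and the congruence generator described by Theorem~\ref{key camb} is exactly $S\setminus J$, as the statement claims.

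I do not expect a serious obstacle: all three ingredients (Theorem~\ref{key camb}, Theorem~\ref{para cong}, and $c$-sortability of simple generators) are already in place, so the proof is essentially a matter of assembling them and verifying that $\eta_J$ sends a reduced word for $c$ to a reduced word for $\tilde c$. The only mildly delicate bookkeeping is confirming that $\eta_J(s_1)\cdots\eta_J(s_n)$ really is reduced in $W_J$ and equals $\tilde c$, which follows from the standard fact that deleting the letters of $S\setminus J$ from any reduced word for $c$ yields a reduced word for $\tilde c$ (a commutation argument, since different orderings of $s_1\cdots s_n$ giving the same $c$ differ only by swaps of commuting generators).
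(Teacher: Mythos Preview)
Your proposal is correct and is essentially identical to the paper's own argument: the paper states Theorem~\ref{camb para cong} as an immediate consequence of Theorem~\ref{key camb} applied with $\eta=\eta_J$ and $\Gamma=S\setminus J$. Your additional checks that $c'=\tilde c$ and that every simple generator is $c$-sortable are exactly the routine verifications needed to make the application go through.
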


To prove Theorem~\ref{camb para factor}, the proof of Theorem~\ref{para factor} can be repeated verbatim, with Proposition~\ref{camb basic facts} replacing Proposition~\ref{basic facts} and Theorem~\ref{camb para cong} replacing Theorem~\ref{para cong}.
We prove Theorems~\ref{camb exist unique} and~\ref{camb diagram} together.

\begin{proof}[Proof of Theorems~\ref{camb exist unique} and~\ref{camb diagram}]
We first claim that there is a compressive homomorphism $\tilde\eta:W\to W'$ fixing $S$ and a generating set $\Gamma$ for the associated congruence such that $\tilde\Gamma=\Gamma\cap\Camb(W,c)$.
Arguing as in previous sections, Theorems~\ref{edge factor} and~\ref{edge cong ji} reduce the proof of the claim to the case where $W$ and $W'$ are irreducible and their diagrams coincide except for edge labels.
Looking through the type-by-type results of Sections~\ref{dihedral sec}--\ref{exceptional sec}, we see that in almost every case, the claim is true for a simple reason:  
For each choice of $\tilde\Gamma$, there is a surjective homomorphism from $W$ to $W'$ whose associated congruence is homogeneous of degree $2$, with a generating set that includes $\tilde\Gamma$.
The only exceptions come when $(W,W')$ is $(H_3,B_3)$ or $(H_4,B_4)$ and one of the following cases applies:

\noindent
\textbf{Case 1:} $q$ precedes $r$ and $\tilde\Gamma=\set{qr}$.
In this case, in light of Case 2 of the proof of Theorem~\ref{H3 B3 thm}, we need to verify that $qrqsrqrs\in H_3$ is not $qrs$-sortable and not $qsr$-sortable.
But $qrqsrqrs$ has only one other reduced word, $qrsqrqrs$, so its $qrs$-sorting word is $qrs|qr|qrs$, and thus it is not $qrs$-sortable.
Similarly, the $qsr$-sorting word for $qrqsrqrs$ is $qr|qsr|qr|s$, so it is not $qsr$-sortable.
The claim is proved in this case for $(W,W')=(H_3,B_3)$.
We easily conclude that $qrqsrqrs$ is not $qrst$-sortable, $qrts$-sortable, $qstr$-sortable or $qtsr$-sortable as an element of $H_4$ (cf. \cite[Lemma~2.3]{sort_camb}), so the claim is proved in this case for $(W,W')=(H_4,B_4)$ as well.

\noindent
\textbf{Case 2:} $r$ precedes $q$ and $\tilde\Gamma=\set{rq}$.
Arguing similarly to Case 1, it is enough to show that $qrqsrqrs$ is not $rqs$-sortable and not $srq$-sortable as an element of $H_3$.
This is true because its $rqs$-sorting word is $q|rqs|rq|rs$ and its $srq$-sorting word is $q|rq|srq|r|s$.

\noindent
\textbf{Case 3:} $q$ precedes $r$ and $\tilde\Gamma=\set{qrqr}$.
In light of Case 8 of the proof of Theorem~\ref{H3 B3 thm}, we need to verify that $rqsrqrs\in H_3$ is not $qrs$-sortable and not $qsr$-sortable.
This is true because its $qrs$-sorting word is $rs|qr|qrs$ and its $sqr$-sorting word is $r|sqr|qr|s$.

\noindent
\textbf{Case 4:} $r$ precedes $q$ and $\tilde\Gamma=\set{rqrq}$.
The $sqr$-sorting word for $rqsrqrs$ is $r|sqr|qr|s$, and the $srq$-sorting word for $rqsrqrs$ is $rq|srq|r|s$, so $rqsrqrs$ is neither $rqs$-sortable nor $srq$-sortable.
As in Case 3, this is enough.

This completes the proof of the claim.
Now Theorem~\ref{key camb} says that the restriction $\eta$ of $\tilde\eta$ is a compressive homomorphism from $\Camb(W,c)$ to $\Camb(W',c')$, whose associated congruence $\Theta$ is generated by $\tilde\Gamma$.
But then $\eta$ is the unique homomorphism from $\Camb(W,c)$ to $\Camb(W',c')$ whose congruence contracts $\tilde\Gamma$:
Any other congruence contracting $\tilde\Gamma$ is associated to a strictly coarser congruence, and thus the number of congruence classes is strictly less than $|\Camb(W',c')|$.
We have proved Theorem~\ref{camb exist unique}\eqref{exist unique tildeGamma}, Theorem~\ref{camb exist unique}\eqref{generated}, and Theorem~\ref{camb diagram}\eqref{exists diagram}.

Now let $\eta:\Camb(W,c)\to\Camb(W',c')$ be any surjective lattice homomorphism.
Theorem~\ref{camb basic facts}\eqref{restrict camb} and Theorem~\ref{camb basic facts}\eqref{c' eta} imply that, for each $r,s\in S$ with $m'(r,s)<m(r,s)$ such that $r$ precedes $s$ in $c$, the congruence associated to $\eta$ contracts exactly $m(r,s)-m'(r,s)$ join-irreducible elements of the form $\alt_k(r,s)$ with $k=2,\ldots,m(r,s)-1$.
Theorem~\ref{camb exist unique}\eqref{every arises} follows by the uniqueness in Theorem~\ref{camb exist unique}\eqref{exist unique tildeGamma}.

Finally, we prove Theorem~\ref{camb diagram}\eqref{any diagram}.
Given any $\tilde\eta$, let $\Gamma$ be a minimal generating set for the associated congruence.
Theorem~\ref{key camb} says that the restriction of $\eta$ to $\Camb(W,c)$ is a surjective homomorphism to $\Camb(W',c')$, and the associated congruence is generated by the $\Gamma\cap\Camb(W,c)$.
But by the uniqueness in Theorem~\ref{camb exist unique}\eqref{exist unique tildeGamma} and by Theorem~\ref{camb exist unique}\eqref{generated}, the restriction of $\tilde\eta$ to $\Camb(W,c)$ is~$\eta$.
\end{proof}

\section{Refinement relations among Cambrian fans}\label{crys case}
Given a finite crystallographic Coxeter group $W$ with Coxeter arrangement $\A$, an associated Cartan matrix $A$, and a Coxeter element $c$ of $W$, the \newword{Cambrian fan} for $(A,c)$ is the fan defined by the shards of $\A$ not removed by the Cambrian congruence $\Theta_c$.
In this section, we prove Theorem~\ref{camb fan coarsen}, which gives explicit refinement relations among Cambrian fans.
We assume the most basic background about Cartan matrices of finite type and the associated finite root systems.

Recall from the introduction that a Cartan matrix $A=[a_{ij}]$ \newword{dominates} a Cartan matrix $\A'=[a'_{ij}]$ if $|a_{ij}|\ge |a'_{ij}|$ for all $i$ and~$j$.
Recall also that Proposition~\ref{dom subroot} says that when $A$ dominates $A'$, $\Phi(A)\supseteq\Phi(A')$ and $\Phi_+(A)\supseteq\Phi_+(A')$, assuming that $\Phi(A)$ and $\Phi(A')$ are both defined with respect to the same simple roots $\alpha_i$.
We again emphasize that $\Phi(A)$ includes any imaginary roots.
The proposition appears as \cite[Lemma~3.5]{Marquis}, but for completeness, we give a proof here.

\begin{proof}[Proof of Proposition~\ref{dom subroot}]
To construct a Kac-Moody Lie algebra from a symmetrizable Cartan matrix $A$, as explained in \cite[Chapter~1]{Kac}, one first defines an auxiliary Lie algebra $\tfg(A)$ using generators and relations.
The Lie algebra $\tfg(A)$ decomposes as a direct sum $\tfn_-\oplus\fh\oplus\tfn_+$ \emph{of complex vector spaces}.
We won't need details on $\fh$ here, but its dual contains linearly independent vectors $\alpha_1,\ldots,\alpha_n$ called the \newword{simple roots}.
The summand $\tfn_+$ decomposes further as a (\emph{vector space}) direct sum with infinitely many summands $\tfg_\alpha$, indexed by nonzero vectors $\alpha$ in the nonnegative integer span $Q_+$ of the simple roots.
Similarly, $\tfn_-$ decomposes into summands indexed by nonzero vectors in the nonpositive integer span of the simple roots.
Furthermore, $\tfn_+$ is freely generated by elements $e_1,\ldots,e_n$ and thus is independent of the choice of $A$ (as long as $A$ is $n\times n$).
Similarly, $\tfn_-$ is freely generated by elements $f_1,\ldots,f_n$ and is independent of the choice of $A$.

There is a unique largest ideal $\fr$ of $\tfg(A)$ whose intersection with $\fh$ is trivial, and this is a direct sum $\fr_-\oplus\fr_+$ \emph{of ideals} with $\fr_\pm=\fr\cap\fn_\pm$.
The \newword{Kac-Moody Lie algebra} $\fg(A)$ is defined to by $\tfg(A)/\fr$.
The Lie algebra $\fg(A)$ inherits a direct sum decomposition $\fn_-\oplus\fh\oplus\fn_+$, and the summands $\fn_\pm$ decompose further as 
\[\fn_-=\bigoplus_{0\neq\alpha\in Q_+}\fg_{-\alpha}\qquad\text{and}\qquad \fn_+=\bigoplus_{0\neq\alpha\in Q_+}\fg_\alpha.\]
A \newword{root} is a nonzero vector $\alpha$ in $Q_+\cup(-Q_+)$ such that $\fg_\alpha\neq0$.
The \newword{(Kac-Moody) root system} associated to $A$ is the set of all roots.

While $\tfn_-$ and $\tfn_+$ are independent of the choice of $A$, the ideal $\fr$ depends on $A$ (in a way that is not apparent here because we have not given the presentation of $\tfg(A)$).
Thus, writing $\fr(A)$ for the ideal associated to $A$, Proposition~\ref{dom subroot} follows immediately from this claim:  
If $A$ and $A'$ are Cartan matrices such that $A$ dominates~$A'$, then $\fr(A)\subseteq\fr(A')$.

This claim follows immediately from a description of the ideal $\fr$ in terms of the \newword{Serre relations}.
Recall that for $x$ in a Lie algebra $\fg$, the linear map $\ad_x:\fg\to\fg$ is $y\mapsto[x,y]$.
In \cite[Theorem~9.11]{Kac}, it is proved that the ideal $\fr_+$ is generated by $\set{(\ad_{e_i})^{1-a_{ij}}(e_j):i\neq j}$ and that $\fr_+$ is generated by $\set{(\ad_{f_i})^{1-a_{ij}}(f_j):i\neq j}$.
\end{proof}

Although we have proved Proposition~\ref{dom subroot} in full generality, we pause to record the following proof due to Hugh Thomas (personal communication, 2018) in the case where $A$ is symmetric.

\begin{proof}[Quiver-theoretic proof of Proposition~\ref{dom subroot} (for $A$ symmetric)]
Suppose that $Q$ is a quiver with $n$ vertices, having no loops (i.e.\ $1$-cycles), suppose $v\in\integers^n$, and fix an algebraically closed field.
We associate a symmetric Cartan matrix $A$ to $Q$ by ignoring the direction of edges and taking $a_{ij}$ to be the total number of edges connecting vertex $i$ to vertex $j$.

Kac \cite[Theorem~3]{Kac1980} (generalizing Gabriel's Theorem) proved that $v$ is the dimension vector of an indecomposable representation of $Q$ if and only if $v$ is the simple-root coordinate of a positive root in the root system $\Phi(A)$.

Given symmetric Cartan matrices $A$ and $A'$ with $A$ dominating $A'$, we can construct a quiver $Q'$ associated to $A'$ and add arrows to it to obtain a quiver $Q$ associated to $A$.
If $v$ is the simple root coordinates of a root in $\Phi(A')$, then if we start with an indecomposable representation of $Q'$ with dimension vector $v$ and assign arbitrary maps to the arrows in $Q$ that are not in $Q'$, the result is still indecomposable, so $v$ is the simple root coordinates of a root in $\Phi(A)$.
\end{proof}

The dual root system $\Phi\ck(A)$ consists of all co-roots associated to the roots in $\Phi(A)$.
This is a root system in its own right, associated to $A^T$.
Since $A$ dominates $A'$ if and only if $A^T$ dominates $(A')^T$, the following proposition is immediate from Proposition~\ref{dom subroot}.

\begin{proposition}\label{dom subcoroot}
Suppose $A$ and $A'$ are symmetrizable Cartan matrices such that $A$ dominates $A'$.
If $\Phi\ck(A)$ and $\Phi\ck(A')$ are both defined with respect to the same simple co-roots $\alpha\ck_i$, then $\Phi\ck(A)\supseteq\Phi\ck(A')$ and $\Phi\ck_+(A)\supseteq\Phi\ck_+(A')$.
\end{proposition}

Suppose $A$ dominates $A'$ and suppose $(W,S)$ and $(W',S)$ are the associated Coxeter systems.
Let $\A$ and $\A'$ be the associated Coxeter arrangements, realized so that the simple \emph{co}-roots are the same for the two arrangements.
(This requires that two different Euclidean inner products be imposed on $\reals^n$.)
Proposition~\ref{dom subcoroot} implies that $\A'\subseteq\A$, so that in particular, each region of $\A$ is contained in some region of $\A'$.
Since the regions of $\A$ are in bijection with the elements of $W$ and the regions of $\A'$ are in bijection with the elements of $W'$, this containment relation defines a surjective map $\eta:W\to W'$.

\begin{theorem}\label{weak hom subroot}
The map $\eta$, defined above, is a surjective lattice homomorphism from the weak order on $W$ to the weak order on $W'$.
\end{theorem}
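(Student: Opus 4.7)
The strategy is to verify, using the shard framework of Section~\ref{shard sec}, that the fibers of $\eta$ form a lattice congruence whose quotient is the weak order on $W'$.

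First, I would set up the geometry.  By Proposition~\ref{dom subcoroot}, $\A'\subseteq\A$.  Since every positive co-root is a nonnegative integer combination of the simple co-roots (the co-roots themselves form a root system), the fundamental chamber of each Coxeter arrangement is the cone $\{\xi : \xi(\alpha\ck_i)\ge 0\text{ for all } i\}$; because the simple co-roots agree between $A$ and $A'$, the fundamental chambers $D$ and $D'$ coincide.  Consequently each region $wD$ of $\A$ lies in a unique region of $\A'$, so $\eta$ is well-defined and surjective.  Identifying reflections of $W$ and of $W'$ with their reflecting hyperplanes, one sees that $\inv_{W'}(\eta(w))$ consists of precisely those hyperplanes in $\A'$ that separate $D$ from $wD$; thus $\inv_{W'}(\eta(w))=\inv_W(w)\cap\A'$, which implies that $\eta$ is order-preserving.

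Next I would show that the fibers of $\eta$ constitute a lattice congruence $\Theta$ on $W$.  An edge $u\covered v$ of the weak order is contracted by $\eta$ if and only if its separating hyperplane lies in $\A\setminus\A'$, and a shard $\Sigma$ has all its edges contracted if and only if $H(\Sigma)\in\A\setminus\A'$.  By the shard-theoretic characterization of congruences, it suffices to verify that the set of shards in hyperplanes of $\A\setminus\A'$ is an order ideal of the shard poset.  The key geometric claim is this: for every rank-two subarrangement $\A_U$ of $\A$ with $\A'_U:=\A_U\cap\A'$ nonempty, both basic hyperplanes of $\A_U$ lie in $\A'$.  To prove the claim, observe that $|\A_U|\ge 2$ forces the image $\pi_U(D)$ in the two-dimensional quotient $V/U$ to be a proper sector; since $\pi_U(D')=\pi_U(D)$, the set $\A'_U$ also contains at least two hyperplanes, so $\A'_U$ is itself a rank-two subarrangement of $\A'$.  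The basic hyperplanes of both $\A_U$ and $\A'_U$ are characterized as the two hyperplanes bounding the common sector $\pi_U(D)=\pi_U(D')$, so they coincide and lie in $\A'$.  Given the claim, if $\Sigma\to\Sigma'$ in the shard digraph and $H(\Sigma)\in\A\setminus\A'$, then for the relevant $U$ the set $\A'_U$ must be empty (otherwise the basic hyperplane $H(\Sigma)$ would lie in $\A'$), forcing $H(\Sigma')\in\A_U\subseteq\A\setminus\A'$.

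Finally, to identify $W/\Theta$ with the weak order on $W'$, I would invoke \cite[Theorem~1.1]{con_app}: the shards of $\A$ not removed by $\Theta$ decompose space into a polyhedral fan $\F_\Theta$ coarsening $\F(W)$, and $\F_\Theta$ encodes the combinatorics of $W/\Theta$.  In our setting the remaining shards lie exactly in hyperplanes of $\A'$, so $\F_\Theta$ equals the Coxeter fan of $\A'$ literally; the natural bijection from regions of this fan to elements of $W'$ is precisely that induced by $\eta$, giving an isomorphism from $W/\Theta$ to the weak order on $W'$ and exhibiting $\eta$ as a surjective lattice homomorphism.  The main obstacle is the geometric claim in the second step; its proof rests crucially on the equality $D=D'$ established in the first step.
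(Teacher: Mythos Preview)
Your overall strategy—show that the set of shards lying in hyperplanes of $\A\setminus\A'$ is an order ideal in the shard poset, and then identify the resulting fan with the Coxeter fan of $\A'$—is exactly the ``almost uniform'' approach sketched in Remark~\ref{alt proof}.  However, your key geometric claim is false as stated, and your argument for it does not work.

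Take $A$ of type $C_3$ (so $\Phi\ck(A)$ is of type $B_3$) dominating $A'$ of type $A_3$, realized with common simple co-roots as in the paper's proof.  Consider the rank-two subarrangement $\A_U$ consisting of the three hyperplanes orthogonal to $e_1+e_2$, $e_3-e_2$, and $e_3+e_1$.  Since $(e_1+e_2)+(e_3-e_2)=e_3+e_1$, the basic hyperplanes of $\A_U$ are those for $e_1+e_2$ and $e_3-e_2$.  Now $\A'_U=\{H_{e_3-e_2}\}$ is nonempty, yet the basic hyperplane $H_{e_1+e_2}$ does not lie in $\A'$.  So your claim fails.  The gap in your argument is the assertion that $\pi_U(D')$ being a proper sector forces $|\A'_U|\ge 2$: the projection $\pi_U(D')$ is bounded by images of facets of $D'$, but those facets need not lie in hyperplanes of $\A'$ containing $U$, so they contribute nothing to $\A'_U$.

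What you actually need for the order-ideal argument is the weaker statement: if a \emph{non-basic} hyperplane of $\A_U$ lies in $\A'$, then both basic hyperplanes do.  This is equivalent to saying that $\Phi\ck_+(A')$ is an order ideal in the root poset of $\Phi\ck_+(A)$, which is precisely the fact highlighted in Remark~\ref{alt proof}.  That fact is true, but the paper notes it has no known uniform proof and fails in infinite type; one proves it by inspecting each finite type.  The paper's own proof of Theorem~\ref{weak hom subroot} takes a different route entirely: it proceeds type-by-type, identifying $\eta$ in each case with one of the explicit homomorphisms $\eta_E$, $\eta_\sigma$, $\eta_\nu$, or (for $F_4$) a computer-verified map from Section~\ref{exceptional sec}.
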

\begin{proof}
As in the proof of Proposition~\ref{dom subroot}, we may restrict our attention to the case where $A$ is irreducible and barely dominates $A'$.
We first consider the case where $A'$ is obtained by erasing a single edge $e$ in the Dynkin diagram of $A$.
Let $E=\set{e}$.
Recall from Section~\ref{erase edge sec} that the map $\eta_E$ maps $w\in W$ to $(w_I,w_J)\in W_I\times W_J=W'$, where $I$ and $J$ are the vertex sets of the two components of the diagram of $A'$.
The set of reflections in $W'$ equals the set of reflections in $W_I$ union the set of reflections in $W_J$.
Since $\inv(w_I)=\inv(w)\cap W_I$ and $\inv(w_J)=\inv(w)\cap W_J$, two elements of $W$ map to the same element of $W'$ if and only if the symmetric difference of their inversion sets does not intersect $W_I\cup W_J$.
Comparing with the edge-erasing case in the proof of Proposition~\ref{dom subroot}, and noting that both maps fix $W'$, we see that $\eta=\eta_E$.
Theorem~\ref{edge cong} now says that $\eta$ is a surjective homomorphism.

The case where $A$ is of type $G_2$ is easy and we omit the details.

Now suppose $A$ is of type $C_n$, so that the dual root system $\Phi\ck(A)$ is of type $B_n$.
Using a standard realization of the type-B root system, reflections in $W$ correspond to positive co-roots as follows:
A co-root $e_i$ corresponds to $(-i\,\,\,i)$, a co-root $e_j-e_i$ corresponds to $(-j\,\,\,-i)(i\,\,\,j)$, and a co-root $e_j+e_i$ corresponds to $(i\,\,\,-j)(j\,\,\,-i)$.
The positive co-roots for $A'$ are the roots $\alpha_1+\alpha_2+\cdots+\alpha_j=e_j$ for $1\le j\le n$ and $\alpha_i+\alpha_{i+1}+\cdots+\alpha_j=e_j-e_{i-1}$ for $2\le i\le j\le n$.
Two adjacent $\A$-regions are contained in the same $\A'$-region if and only if the hyperplane separating them is in $\A\setminus\A'$.
Thus, in light of Proposition~\ref{simion cover}, we see that $\eta$ has the same fibers as the map $\eta_\sigma$ of Section~\ref{simion sec}, which is a surjective lattice homomorphism by Theorem~\ref{simion thm}.
Writing, as before, $\tau$ for the inverse map $\tau$ to $\eta_\sigma$, the inversions of a permutation $\pi$ correspond to the inversions $t$ of $\tau(\pi)$ such that $H_t\in\A'$.
It follows that $\eta=\eta_\sigma$.

When $A$ is of type $B_n$, so that $\Phi\ck(A)$ is of type $C_n$, the argument is the same, using Proposition~\ref{nonhom cover} instead of Proposition~\ref{simion cover}.

When $A$ is of type $F_4$, the dual root system $\Phi\ck(A)$ is also of type $F_4$, and we choose an explicit realization of $\Phi\ck(A)$ with
\[\alpha\ck_p=\frac12(-e_1-e_2-e_3+e_4)\,,\quad\alpha\ck_q=e_1\,,\quad\alpha\ck_r=e_2-e_1\,,\quad\alpha\ck_s=e_3-e_2.\]
Here $p$, $q$, $r$, and $s$ are as defined in connection with Theorem~\ref{F4 thm}.
The positive co-roots are $\set{\frac12(\pm e_1\pm e_2\pm e_3+e_4)}\cup\set{e_i:i=1,2,3,4}\cup\set{e_j\pm e_i:1\le i<j\le 4}$.
The positive co-roots for the $A'$, as a subset of these co-roots, are 
\[\set{e_1,e_2,e_3}\cup\set{e_2-e_1,e_3-e_2,e_3-e_1}\cup\SEt{\sum_{i=1}^4b_ie_i:b_i\in\Set{\pm\frac12},b_4=\frac12\sum b_i\le0}\]
Computer calculations show that the surjective homomorphism $\eta$ found in Case~4 of the proof of Theorem~\ref{F4 thm} has the property that two elements of $W$, related by a cover in the weak order, map to the same element of $W'$ if and only if the reflection that relates them corresponds to a co-root not contained in the subset $\Phi\ck(A')$.
The two maps coincide by the same argument given in previous cases.
\end{proof}

Theorems~\ref{camb exist unique} and~\ref{weak hom subroot} immediately imply Theorem~\ref{camb fan coarsen}.

\begin{remark}\label{alt proof}
We mention two alternative proofs of Theorem~\ref{weak hom subroot} that are almost uniform, but not quite.
Both use the following fact:
For $A$ and $A'$ as in Proposition~\ref{dom subroot}, if $\Phi(A)$ is finite, then $\Phi(A')$ is not only a subset of $\Phi(A)$, but also an order ideal in the root poset of $\Phi(A)$ (the positive roots ordered with $\alpha\le\beta$ if and only if $\beta-\alpha$ is in the nonnegative span of the simple roots).
This fact is easily proved type-by-type, but we are unaware of a uniform proof.
Indeed, the fact fails for infinite root systems, so perhaps a uniform proof shouldn't be expected.
Using the fact, it is easy to prove Theorem~\ref{weak hom subroot} using the polygonality of the weak order (\cite[Theorem~10-3.7]{regions10} and \cite[Theorem~9-6.5]{regions9}) or using the characterization of inversion sets as rank-two biconvex (AKA biclosed) sets of positive roots (\cite[Lemma~4.1]{Dyer} or \cite[Theorem~10-3.24]{regions10}).
\end{remark}

\section*{Acknowledgments}
Thanks are due to Vic Reiner, Hugh Thomas, and John Stembridge for helpful conversations.
Thanks in particular to Hugh Thomas for providing the quiver-theoretic proof of the symmetric case of Proposition~\ref{dom subroot} (see Section~\ref{crys case}) and encouraging the author to improve upon an earlier non-uniform finite-type proof.
Thanks to Nicolas Perrin for bibliographic help concerning Kac-Moody Lie algebras and for his helpful notes~\cite{Perrin}. 
The computations described in this paper were carried out in \texttt{maple} using Stembridge's \texttt{coxeter/weyl} and \texttt{posets} packages~\cite{StembridgePackages}.
An anonymous referee gave many helpful suggestions and pointed out most of what appears in Remark~\ref{alt proof}.

\end{document}